%%%%%%%%%%%%%%%%%%%%%%%%%%%%%%%%%%%%%%%%%%%%%%%%%%%%%%%%%%%%%%%%%%%%%
%%                                                                 %%
%% Please do not use \input{...} to include other tex files.       %%
%% Submit your LaTeX manuscript as one .tex document.              %%
%%                                                                 %%
%% All additional figures and files should be attached             %%
%% separately and not embedded in the \TeX\ document itself.       %%
%%                                                                 %%
%%%%%%%%%%%%%%%%%%%%%%%%%%%%%%%%%%%%%%%%%%%%%%%%%%%%%%%%%%%%%%%%%%%%%

%%\documentclass[referee,sn-basic]{sn-jnl}% referee option is meant for double line spacing

%%=======================================================%%
%% to print line numbers in the margin use lineno option %%
%%=======================================================%%

%%\documentclass[lineno,sn-basic]{sn-jnl}% Basic Springer Nature Reference Style/Chemistry Reference Style

%%======================================================%%
%% to compile with pdflatex/xelatex use pdflatex option %%
%%======================================================%%

%%\documentclass[pdflatex,sn-basic]{sn-jnl}% Basic Springer Nature Reference Style/Chemistry Reference Style

%%\documentclass[sn-basic]{sn-jnl}% Basic Springer Nature Reference Style/Chemistry Reference Style
\documentclass[sn-mathphys]{sn-jnl}% Math and Physical Sciences Reference Style
%%\documentclass[sn-aps]{sn-jnl}% American Physical Society (APS) Reference Style
%%\documentclass[sn-vancouver]{sn-jnl}% Vancouver Reference Style
%%\documentclass[sn-apa]{sn-jnl}% APA Reference Style
%%\documentclass[sn-chicago]{sn-jnl}% Chicago-based Humanities Reference Style
%%\documentclass[sn-standardnature]{sn-jnl}% Standard Nature Portfolio Reference Style
%%\documentclass[default]{sn-jnl}% Default
%%\documentclass[default,iicol]{sn-jnl}% Default with double column layout

%%%% Standard Packages
%%<additional latex packages if required can be included here>
%%%%

%%%%%=============================================================================%%%%
%%%%  Remarks: This template is provided to aid authors with the preparation
%%%%  of original research articles intended for submission to journals published 
%%%%  by Springer Nature. The guidance has been prepared in partnership with 
%%%%  production teams to conform to Springer Nature technical requirements. 
%%%%  Editorial and presentation requirements differ among journal portfolios and 
%%%%  research disciplines. You may find sections in this template are irrelevant 
%%%%  to your work and are empowered to omit any such section if allowed by the 
%%%%  journal you intend to submit to. The submission guidelines and policies 
%%%%  of the journal take precedence. A detailed User Manual is available in the 
%%%%  template package for technical guidance.
%%%%%=============================================================================%%%%

\jyear{2021}%

\renewcommand{\w}{\omega}
\newcommand{\ds}{\displaystyle}
\def \sgn {{\rm sgn}}

\newcommand{\tomega}{\tilde{\omega}}
\newcommand{\hatomega}{\breve{\omega}}
\newcommand{\tzeta}{\tilde{\zeta}}
\newcommand{\tvarepsilon}{\tilde{\varepsilon}}
\newcommand{\hatzeta}{\breve{\zeta}}
\newcommand{\barlambda}{\bar{\lambda}}
\newcommand{\bareta}{\bar{\eta}}
%\newcommand{\vertthapprox}{\thicapprox}

%% as per the requirement new theorem styles can be included as shown below
\theoremstyle{thmstyleone}%
\newtheorem{theorem}{Theorem}[section]%  meant for continuous numbers
%%\newtheorem{theorem}{Theorem}[section]% meant for sectionwise numbers
%% optional argument [theorem] produces theorem numbering sequence instead of independent numbers for Proposition
\newtheorem{proposition}[theorem]{Proposition}% 

\theoremstyle{thmstyletwo}%
\newtheorem{remark}{Remark}%

\theoremstyle{thmstylethree}%
\newtheorem{definition}{Definition}[section]%
\newtheorem{lemma}{Lemma}[section]

\raggedbottom
%%\unnumbered% uncomment this for unnumbered level heads

\usepackage{amsmath,mathtools,amsthm}
\newcommand{\alert}[1]{{\color{black} #1}}	%red
\newcommand{\alertchieu}[1]{{\color{black} #1}}
\newcommand{\alertsection}[1]{{\color{black} #1}}
\newcommand{\tblue}[1]{{\color{black} #1}} %green
\usepackage{enumitem}

 \DeclarePairedDelimiter\abs{\lvert}{\rvert}
 \newcommand{\bftab}{\fontseries{b}\selectfont}
\begin{document}

%\title[Unified Smoothing Approach for Best Hyperparameter Selection Problem Using a Bilevel Optimization Strategy]{Unified Smoothing Approach for Best Hyperparameter Selection Problem Using a Bilevel Optimization Strategy}
%\title[Unified Smoothing Bilevel Approach for Hyperparameter Selection]{Unified Smoothing Approach for Best Hyperparameter Selection Problem Using a Bilevel Optimization Strategy\footnote[2]{The authors contributed equally to this work.}}

\title[Unified Smoothing Bilevel Approach for Hyperparameter Selection]{Unified Smoothing Approach for Best Hyperparameter Selection Problem Using a Bilevel Optimization Strategy}
%%=============================================================%%

\author[1]{\fnm{Jan Harold} \sur{Alcantara}}\email{\color{black}{janharold.alcantara@riken.jp}}
%\equalcont{These authors contributed equally to this work.}

\author[2]{\fnm{Chieu Thanh} \sur{Nguyen}}\email{\color{black}{ntchieu@vnua.edu.vn}}

\author[1,3]{\fnm{Takayuki} \sur{Okuno}}\email{\color{black}{takayuki-okuno@st.seikei.ac.jp}}
%\equalcont{These authors contributed equally to this work.}

\author[1,4]{\fnm{Akiko} \sur{Takeda}}\email{\color{black}{takeda@mist.i.u-tokyo.ac.jp}}
%\equalcont{These authors contributed equally to this work.}

\author*[5]{\fnm{Jein-Shan} \sur{Chen}}\email{ \color{black}{jschen@math.ntnu.edu.tw}}
%\equalcont{These authors contributed equally to this work.}

\affil[1]{\orgdiv{Center for Advanced Intelligence Project}, \orgname{RIKEN}, 
	\orgaddress{\city{Tokyo}, \postcode{103-0027}, \state{Japan}}}

\affil[2]{\orgdiv{Department of Mathematics}, \orgname{Faculty of Information 
Technology, Vietnam National  University of Agriculture}, 
\orgaddress{\city{Hanoi}, \postcode{131000}, \state{Vietnam}}}

\affil[3]{\orgdiv{Faculty of Science and Technology}, \orgname{Seikei 
University}, 
\orgaddress{\city{Tokyo}, \postcode{180-8633}, \state{Japan}}}

\affil[4]{\orgdiv{Department of Mathematical Informatics}, \orgname{Graduate 
School of Information Science and Technology, The University of Tokyo}, 
\orgaddress{\city{Tokyo}, \postcode{113-8656}, \state{Japan}} }

\affil*[5]{\orgdiv{Department of Mathematics}, \orgname{National Taiwan Normal University}, \orgaddress{\city{Taipei}, \postcode{11677}, \state{Taiwan}}}

%%==================================%%
%% sample for unstructured abstract %%
%%==================================%%

\abstract{Strongly motivated from use in various fields including machine learning,
the methodology of sparse optimization has been developed intensively so far.
Especially,  the recent advance of algorithms for solving problems with nonsmooth regularizers  is remarkable.  However,  those algorithms suppose that weight parameters of regularizers, called hyperparameters hereafter, are pre-fixed,  and it is a crucial matter how the best hyperparameter should be selected.
In this paper, we focus on the hyperparameter selection of regularizers related to the $\ell_p$ function with $0<p\le 1$
and \alert{apply a} bilevel programming strategy, wherein we need to solve a 
bilevel problem, whose lower-level problem is
\alert{nonsmooth}, possibly nonconvex and non-Lipschitz. Recently, for solving 
a bilevel problem for hyperparameter selection of the pure $\ell_p\ (0<p \le 
1)$ regularizer
Okuno et al. discovered new necessary optimality conditions, called SB(scaled 
bilevel)-KKT conditions,
and further proposed a smoothing-type algorithm using a certain
smoothing function. \alert{However, this optimality measure is loose in the sense 
that there could be many points that satisfy the SB-KKT conditions.}

\alert{In this work, we propose new bilevel KKT conditions, which are new 
necessary optimality conditions tighter than the ones proposed by Okuno et al. 
Moreover, we propose} a unified smoothing approach using smoothing functions 
that belong to the Chen-Mangasarian class, and then prove that  generated 
iteration points accumulate at \alert{bilevel KKT points }under milder 
constraint qualifications.
Another contribution is that our approach and analysis are applicable to a 
wider class of regularizers.
Numerical comparisons demonstrate which smoothing functions work well for hyperparameter optimization via bilevel
optimization approach.
%Numerical comparisons are presented that demonstrate which smoothing functions work well for hyperparameter optimization via bilevel
%optimization approach.
}

%%================================%%

\keywords{hyperparameter learning, smoothing functions, bilevel optimization}

%%\pacs[JEL Classification]{D8, H51}

%%\pacs[MSC Classification]{35A01, 65L10, 65L12, 65L20, 65L70}

\maketitle

\section{Introduction}\label{sec1}
A learning algorithm in machine learning usually involves solving the unconstrained optimization problem 
	\begin{equation}\label{lower-levelproblem0}
	\min _{\omega \in \Re^n} g(\omega) + \sum_{i=1}^r \lambda_i R_i(\omega),
	\end{equation}
where $\lambda = (\lambda_1, \dots, \lambda_r)$ is called a hyperparameter, 
whose value is decided prior to implementation of the learning algorithm. Here, 
$R_i, g :  \Re^n \to \Re$, $i=2,\dots, r$ are twice continuously differentiable 
functions, and 

\begin{equation}
R_1(\omega) \coloneqq \sum_{i=1}^n \psi(\lvert\omega_i\rvert^p) \quad (0 < p 
\leq 1)
\label{eq:R1}
\end{equation}
with $\psi$ satisfying the following assumption:

\medskip 

\alert{\noindent \textbf{Assumption~(A).}} $\psi: [0, \infty) \to \Re$ is twice 
continuously differentiable on $[0, \infty)$ and there exist two positive 
constants $\alpha, \beta$ such that
$0< \psi'(t)\leq \alpha$ and $-\beta \leq \psi''(t)\leq 0$ for all $t \in [0, \infty)$.

\medskip 
 In this manuscript, we make \alert{Assumption~(A)} our blanket assumption on 
 $\psi$. It is well-known that the function $R_1(\omega)$ is nonsmooth, 
 nonconvex, even non-Lipschitz when $p\in (0,1)$. There are many penalty 
 functions often used in statistics and signal reconstruction satisfying 
 Assumption~(A) (see Appendix~\ref{app:penaltyfunctions}). 
 
 \medskip 
 
 For notation purposes, we denote
 \[
 G(\omega, \bar{\lambda}) \coloneqq g(\omega) + \alert{ \barlambda^T 
 \bar{R}(w)},
 \]
 with $\bar{\lambda} \coloneqq (\lambda_2, \dots, \lambda_r)^T \in 
 \Re^{r-1}$ \alert{and $\bar{R}: \Re^n \to \Re^{r-1}$ given by $\bar{R}(w) 
 \coloneqq 
 (R_2(w),\dots, 
 R_r(w))^T$}. Then problem \eqref{lower-levelproblem0} can be rewritten as 

	\begin{equation}\label{lower-levelproblem}
	\min_{\omega\in \Re^n}G(\omega, \bar{\lambda}) +  \lambda_1 R_1(\omega).
	\end{equation} 

 \medskip 
 
 The problem of finding \alert{the} optimal values of the hyperparameters for 
 \eqref{lower-levelproblem} can 
 be accomplished using grid search and Bayesian optimization 
 \cite{BB12,WHZMF16}. This paper, on the other hand, is devoted to a bilevel 
 optimization strategy to find the best hyperparameter. In particular, we focus 
 on the bilevel nonsmooth programming problem
 \begin{equation} \label{bileveloptimization}
 \begin{array}{cc}
  \displaystyle{\min_{\omega^*_{\lambda},\lambda}} & f(\omega^*_{\lambda}) \\
 {\rm s.t} & \omega^*_{\lambda} \in \underset{\omega\in \Re^n}{\mbox{argmin}}\; G(\omega, \bar{\lambda}) + \lambda_1 R_1(\omega) \\
 & \alert{(\lambda_1,\bar{\lambda}) \in \Omega_{\epsilon} \subset \Re^r},
 \end{array}
 \end{equation}
 where $f: \Re^n \to \Re$ is continuously differentiable \alert{and 
 	\begin{equation}
 		\Omega_{\epsilon} \coloneqq \{ (\lambda_1,\bar{\lambda})\in \Re\times 
 		\Re^{r-1} : 
 		\lambda_1\geq \epsilon , \, \barlambda \geq 0\},
 		\label{eq:Omega_eps}
 	\end{equation}
 	for some small parameter $\epsilon > 0$. }
 Problem \eqref{lower-levelproblem} that appears in the constraint set of 
 \eqref{bileveloptimization} is called the lower-level problem, and the 
 minimization of $f$ is called the upper-level problem. \alert{Note that in the 
 interest of obtaining sparse models, we impose a strict positive lower bound 
 for the parameter $\lambda_1$ corresponding to the sparsity-promoting 
 regularizer $R_1$.}

\medskip 
 
  Bilevel optimization problems were introduced by Bracken and McGill 
  \cite{BM73}. The reader is referred to \cite{D03,CMS07,SMD18} for a survey of 
  methods for solving the bilevel optimization problem and their applications. 
  Efforts have been put forth by many researchers in the past few decades to 
  use bilevel optimization strategy to the problem of finding the best 
  hyperparameter values. In particular, \cite{BHJKP06,BKJP08} focused on a 
  bilevel support-vector regression (SVR) problem where the lower-level 
  optimization problem is cast as a convex quadratic program. 
  The authors in \cite{MBB09,MBB11} proposed a bilevel cross-validation program 
  for 
  support-vector machine (SVM), where the upper-level problem is convex and 
  nonsmooth, while the lower-level problem is differentiable. \cite{ORBP16} 
  used gradient-based methods for the bilevel optimization problem with 
  nonsmooth convex lower-level problem (for example, sparse models based on the 
  $\ell_1$-norm). However, \cite{BHJKP06,MBB09,ORBP16} only proposed the 
  algorithms to solve the bilevel optimization without providing any 
  theoretical analysis. \cite{TBP20} constructed the hyperparameter 
  optimization problem through $K$-fold cross-validation as a bilevel 
  optimization problem with LASSO regression and an $\ell_1$-norm 
  support-vector machine (SVM) in the lower-level problem. They used parametric 
  programming theory to reformulate the bilevel optimization problem as a 
  single level problem, which is called the bilevel and parametric optimization 
  approach to hyperparameter optimization (HY-POP). Similar to that in 
  \cite{TBP20}, the authors only provided the numerical experiments to show the 
  efficiency of HY-POP without any theoretical analysis. \cite{KP13} considered 
  bilevel optimization problems for variational image denoising models, where 
  the upper-level problem is smooth while the lower-level problem is the 
  $\ell_p$ regularizer with $p=\frac{1}{2},1,2$. They proposed semismooth 
  Newton method for solving the bilevel optimization problem including the 
  $\ell_2$-norm and the $\ell_1$-norm. Especially, they only provided numerical 
  experiments for the $\ell_{\frac{1}{2}}$-norm and leave the theoretical 
  analysis for nonconvex $\ell_{\frac{1}{2}}$-norm to future work. 
  Nevertheless, they showed that the $\ell_{\frac{1}{2}}$-norm has better 
  denoising performance than the $\ell_1$-norm. Recently, \cite{OTKW21} 
  considered the bilevel program \eqref{bileveloptimization} with the function 
  $R_1(\omega) \coloneqq \|\w\|_p^p = \sum_{i=1}^n \lvert\omega_i\rvert^p$ $(0 
  < p \leq 1)$ 
  (i.e. 
  the $\ell_p$-regularizer) \alert{and $\epsilon = 0$} by employing a smoothing 
  method 
  via the twice continuously differentiable function 
 \begin{equation}
  \varphi _{\mu } (\omega) = \sum _{i=1}^n (\omega _i^2 + \mu ^2)^{\frac{p}{2}} \label{R1_smooth_OTKW21}
 \end{equation}
 as a smooth approximation of $R_1$. Using such a smoothing function, problem 
 \eqref{bileveloptimization} can be approximated by a smooth bilevel program, 
 which then allows for use of several optimization techniques that normally 
 require differentiability. Thus, they established the convergence analysis for 
 the $\ell_p$-norm with $p\in (0,1]$.
 
 \medskip 
 
\alert{The following are the main theoretical contributions of our present work:
 \begin{enumerate}
 	\item[(I)]  First, we propose bilevel KKT conditions (BKKT conditions for 
 	short) for 
 		problem \eqref{bileveloptimization}, which are new necessary 
 		optimality conditions for the relaxation of 
 		\eqref{bileveloptimization} obtained by replacing its lower-level 
 		optimization problem by the corresponding first order necessary 
 		conditions in terms of generalized subdifferentials (see Section 
 		\ref{sec:subdiff}), that is, 
 		 \begin{equation} \label{first-orderoptimality}
 		 \begin{array}{cc}
 		 \min\limits_{\omega,\lambda} & f(\omega) \\
 		 {\rm s.t} & 0 \in \partial_{\omega}(G(\omega, \bar{\lambda}) +  
 		 \lambda_1 
 		 R_1(\omega)) \\
 		 &  \alert{(\lambda_1,\barlambda)\in \Omega_{\epsilon} }.
 		 \end{array}
 		 \end{equation} 
 	Our proposed BKKT conditions are notably tighter than the scaled bilevel 
 	KKT conditions (SB-KKT conditions for short)
 	discovered in \cite{OTKW21}. As a special case, when $p=1$ and the 
 	functions 
 	$f$, $g$ and $R_i$ ($i=1,\dots, r$) are all convex functions, the proposed 
 	BKKT 
 	conditions are necessary optimality conditions for the original bilevel 
 	problem \eqref{bileveloptimization}. 
 	\item[(II)] Second, we consider a general framework for 
 	constructing smoothing functions for $R_1$ given by \eqref{eq:R1}, where 
 	the associated $\psi$ is 
 	any function that satisfies Assumption~(A) and the absolute value mapping
 	is smoothly approximated by a function generated via density functions, as 
 	inspired by the smoothing technique for plus functions by Chen and 
 	Mangasarian \cite{CM96}. Based on this approach, we propose a 
 	smoothing algorithm and prove its convergence to BKKT points by 
 	utilizing only some information on the generating density function. That 
 	is, we do not rely on a specific formula of a smoothing function, and 
 	therefore our framework provides a unified theory for a class of smoothing 
 	algorithms 
 	for \eqref{bileveloptimization}. Indeed, one novelty of this work is our 
 	unified convergence analysis that solely depends on density functions. 
 	Along with these, we only suppose weaker algorithmic assumptions and 
 	constraint 
 	qualifications, as opposed to the specific model and algorithm considered 
 	in \cite{OTKW21}. Finally, in connection with contribution (I) described 
 	above, we obtain stronger results since we establish
 	convergence to	BKKT points, which are tighter necessary conditions than 
 	SB-KKT conditions. 
 \end{enumerate}

\medskip 
The SB-KKT conditions proposed in \cite{OTKW21} for problem \eqref{first-orderoptimality} with $\epsilon = 0 $ are more 
loose than our proposed BKKT conditions as mentioned in (I). Consequently, 
we provide a better optimality measure for the relaxation 
\eqref{first-orderoptimality} of the bilevel program 
\eqref{bileveloptimization}. In fact, when $p=1$, the 
SB-KKT conditions proposed in \cite{OTKW21} are not even necessary conditions 
for the relaxed problem \eqref{first-orderoptimality}, but for another 
relaxation which has a larger feasible region (see model 
\eqref{scaledone-levelproblem} and Proposition 
\ref{prop:scaled-lower-level}). Hence, our proposed BKKT 
conditions provide a significant improvement over the prior work. 

Moreover, under an appropriate assumption on the algorithm iterates (see Remark 
\ref{rem:liminf}), our 
convergence analysis significantly generalizes the existing technique of 
\cite{OTKW21} that only holds for 
the case when $\epsilon = 0$, $\psi(t) \equiv t$, and the 
function $\varphi_{\mu }$ in \eqref{R1_smooth_OTKW21} is used to smoothly 
approximate the $\ell_p$ norm in 
\eqref{bileveloptimization}. In the said work, the formula of the 
smoothing function \eqref{R1_smooth_OTKW21} is fully exploited to derive 
important inequalities specific to \eqref{R1_smooth_OTKW21}, and to obtain 
fundamental lemmas for establishing global 
subsequential convergence (see, for instance, \cite[Lemma 7, Proposition 8, and 
the proof 
of Theorem 5]{OTKW21}). 
%For 
%instance, \cite[Lemma 7]{OTKW21} provides the relationship between the rates 
%of 
%decay to zero of the smoothing parameter $\mu$ and the sequence of iterates 
%$\{ 
%\omega^k\}$ produced by the smoothing algorithm, and inequalities particular 
%to 
%the function \eqref{R1_smooth_OTKW21} are used to obtain the result. 
Indeed, 
the lines of arguments 
used to establish the aforementioned results
are only applicable to the chosen smoothing function \eqref{R1_smooth_OTKW21}. 
It should be noted that extension to a wider class of regularizers $R_1$ given 
by \eqref{eq:R1} 
with an arbitrary smooth approximation of the absolute value function is not 
trivial and requires more subtle arguments. To this 
end, the present work provides a unified analysis that rather derives 
alternative fundamental lemmas and properties, using arguments that do not rely 
on the specific formula of a smoothing function, but only some analytic 
properties of a density function generating the smoothing function. In turn, 
other important 
contributions of our work also involve the flexibility of our
algorithm in terms of the smoothing functions used and its applicability to a 
considerably wider class of 
regularizers for the hyperparameter optimization problem, that also come with
convergence guarantees under less restrictive constraint qualifications and 
weaker algorithmic assumptions. }
 \medskip 
 
 From a practical point of view, the choice of smoothing functions is critical 
 in achieving successful simulations with fast convergence rates. 
% \alert{Towards this end, we present a practical implementation of our 
% algorithm that uses a semismooth Newton method for the bilevel KKT system.} 
We  compare 
 the numerical performance of six smoothing functions generated via Chen and 
 Mangasarian's method \cite{CM96} to determine which function is more suitable 
 for our smoothing approach. \alert{Our proposed algorithm involves the use of a semismooth Newton method to solve a sequence of bilevel KKT systems, thereby significantly improving upon the methodology proposed in \cite{OTKW21}. As a result, one significant finding from our 
 numerical experience indicates that some smoothing functions result to a faster 
 smoothing algorithm that obtains sparse models with lower validation and test 
 errors, and thus giving insights on which smoothing function can work well 
 with the proposed strategy. }

 \medskip

 This paper is organized as follows: \alert{In Section 2, we review some fundamental 
 concepts in analysis and a brief review of the method proposed in \cite{CM96} 
 to construct smoothing functions of the plus function by means of density 
 functions. This will serve as our basis to construct smoothing functions for 
 $R_1(\omega)$, and our theoretical analysis will all be dependent on the 
 density function. In Section 3, we recall the SB-KKT conditions utilized in 
 \cite{OTKW21}, and then propose our BKKT conditions. In Section 4, we present 
 our smoothing algorithm along with its convergence analysis. In Section 5,  we 
 compare the numerical performance of different smoothing functions generated 
 from different density functions in solving \eqref{bileveloptimization}.}

 \medskip

 Throughout this paper, we denote the vector $\omega\in \Re^n$ by 
 $\omega=(\omega_1, \dots, \omega_n)^T$. We let 
 $\lvert\omega\rvert\coloneqq(\lvert\omega_1\rvert, \dots, 
 \lvert\omega_n\rvert)^T$, 
 and $\lvert\omega\rvert^p\coloneqq(\lvert\omega_1\rvert^p, \dots, 
 \lvert\alert{\omega_n}\rvert^p)^T$. \alert{We define $I(\omega) \coloneqq \{j 
 \in 
 \{1,2,\dots,n\} ~\lvert~ \omega_j = 0\}$ for any $\w\in\Re^n$. The Hadamard 
 product of two vectors 
 $\omega\in \Re^n$ and 
 $\hatomega \in 
 \Re^n$ is denoted by $\omega \odot \hatomega \coloneqq 
 (\omega_1 \hatomega_1, \dots, \omega_n \hatomega_n)^T$}.  We define 
 the 
 $\sgn$ function as sgn$(t)=1$  $(t>0)$, sgn$(t)$ = 0 $(t=0)$, and sgn$(t)=-1$ 
 $(t<0)$. For a differentiable function $f:\Re^n\to \Re$, we denote the 
 gradient function of $f$ by $\nabla f$ with $\nabla f(\omega) \coloneqq 
 (\frac{\partial f(\omega)}{\partial\omega_1}, \dots, \frac{\partial 
 f(\omega)}{\partial\omega_n}) ^T \in \Re^n$ and if $f$ is twice 
 differentiable, we denote the Hessian of $f$ by $\nabla^2 f$ with $\nabla^2 
 f(\omega) \coloneqq \left(\frac{\partial^2 
 f(\omega)}{\partial\omega_i\partial\omega_j}\right)_{1\leq i,j\leq n} \in 
 \Re^{n\times n}$.

%---------------------------------------------------------------------------------------------- Section 2
 \section{Preliminaries}
\alert{  We review some important concepts in nonsmooth analysis. We also recall the 
  method of Chen and Mangasarian to construct smoothing functions for the plus 
  function, and discuss how to use this to obtain a smoothing function for the 
  absolute value function. }
  
%--------------------------------------------------------------------------------- Subsection 2.1
 \subsection{Some concepts in analysis}\label{sec:subdiff}
 The following facts can be found in the books of Rockafellar and Wet \cite{RW09}.

%----------------------------------------------------------------- Definition 2.1
 \begin{definition}\cite[Definition 8.3]{RW09}\label{generalsubgradient}
 Let $f: \Re^n \to \Re\cup \{\infty\}$ be a proper function. For vectors $v\in \Re^n$ and $\bar{x}\in \Re^n$, one say that
 \begin{itemize}
 \item [1.]  $v$ is a regular subgradient of $f$ at $\bar{x}$, written $v \in \hat{\partial}f(\bar{x})$, if 
 \[
 f(x) \geq f(\bar{x}) + v^T(x - \bar{x}) + o(\|x - \bar{x}\|).
 \]
 \item [2.] $v$ is a general subgradient of $f$ at $\bar{x}$, written $v \in \partial f(\bar{x})$, if there are sequences $\{x^{\nu}\}\subseteq \Re^n$ and $\{v^{\nu}\}\subseteq \Re^n$ such that
 \[
 \lim\limits_{\nu\to\infty}x^{\nu} = \bar{x} \; \mbox{and}\; v^{\nu} \in \hat{\partial}f(x^{\nu}) \; \mbox{with} \; \lim\limits_{\nu\to\infty}v^{\nu} = v.
 \]
 \end{itemize}
 \end{definition}
 Note that a regular subgradient of $f$ at $\bar{x}$ is also called a Fr\'{e}chet subgradient of $f$ at $\bar{x}$ (see in \cite{K03}). Moreover, if $f$ is a proper and convex function, the regular subgradient of $f$ coincides \tblue{with} the subgradient of $f$ in the sense of convex analysis (see in \cite[Proposition 8.12]{RW09}).
%--------------------------------------------------------------------------- Proposition 2.1
 \begin{proposition}\cite[Theorem 8.6]{RW09}
 For a function $f: \Re^n \to \Re\cup \{\infty\}$ and a point $\bar{x}$ where $f$ is finite, the subgradient sets $\hat{\partial}f(\bar{x})$ and $\partial f(\bar{x})$ are closed, with
 $\hat{\partial}f(\bar{x})$ convex and $\hat{\partial}f(\bar{x}) \subset \partial f(\bar{x})$.
 \end{proposition}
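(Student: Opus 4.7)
The statement packages four assertions about the regular and general subgradient sets at a finite point $\bar{x}$: closedness of $\hat{\partial}f(\bar{x})$, its convexity, closedness of $\partial f(\bar{x})$, and the inclusion $\hat{\partial}f(\bar{x}) \subset \partial f(\bar{x})$. My plan is to dispose of them in increasing order of difficulty, beginning with the inclusion, which is essentially a tautology: given $v \in \hat{\partial}f(\bar{x})$, one simply exhibits the constant sequences $x^{\nu} \equiv \bar{x}$ and $v^{\nu} \equiv v$, noting that $v \in \hat{\partial}f(\bar{x})$ is (trivially) a regular subgradient at $\bar{x}$ and that both sequences converge appropriately, so Definition \ref{generalsubgradient}(2) is met.

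For convexity and closedness of $\hat{\partial}f(\bar{x})$, I would first reformulate the definition of a regular subgradient in the more convenient limit-inferior form: $v \in \hat{\partial}f(\bar{x})$ if and only if
\[
\liminf_{x \to \bar{x},\, x \ne \bar{x}} \frac{f(x) - f(\bar{x}) - v^{T}(x - \bar{x})}{\|x - \bar{x}\|} \;\geq\; 0.
\]
This equivalence follows from the $o(\|x-\bar{x}\|)$ condition in Definition \ref{generalsubgradient}(1). Once this reformulation is in hand, convexity is almost immediate: for $v_1, v_2 \in \hat{\partial}f(\bar{x})$ and $t \in [0,1]$, writing the quotient for $tv_1 + (1-t)v_2$ as $t$ times the quotient for $v_1$ plus $(1-t)$ times the quotient for $v_2$, the superadditivity property $\liminf(A+B) \geq \liminf A + \liminf B$ of liminf yields a nonnegative liminf. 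Closedness of $\hat{\partial}f(\bar{x})$ is handled by a similar $\varepsilon$-argument: if $v^{k} \to v$ with $v^{k} \in \hat{\partial}f(\bar{x})$, I would write the quotient for $v$ as the quotient for $v^{k}$ plus the extra term $(v^{k}-v)^{T}(x-\bar{x})/\|x-\bar{x}\|$, whose absolute value is bounded by $\|v^{k}-v\|$. Choosing $k$ large makes this bound arbitrarily small, forcing the liminf for $v$ to be nonnegative.

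Closedness of $\partial f(\bar{x})$ is the one genuinely requiring care, and it is the step I expect to be the main obstacle. Let $v^{k} \in \partial f(\bar{x})$ with $v^{k} \to v$. For each $k$, Definition \ref{generalsubgradient}(2) produces double-indexed sequences $\{x^{k,\nu}\}_{\nu}$ with $x^{k,\nu} \to \bar{x}$ and $v^{k,\nu} \in \hat{\partial}f(x^{k,\nu})$ with $v^{k,\nu} \to v^{k}$ as $\nu \to \infty$. The natural move is a diagonal extraction: for each $k$ pick an index $\nu_{k}$ large enough that $\|x^{k,\nu_{k}} - \bar{x}\| < 1/k$ and $\|v^{k,\nu_{k}} - v^{k}\| < 1/k$. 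Then $x^{k,\nu_{k}} \to \bar{x}$, and the triangle inequality gives $\|v^{k,\nu_{k}} - v\| \leq \|v^{k,\nu_{k}} - v^{k}\| + \|v^{k} - v\| \to 0$, while $v^{k,\nu_{k}} \in \hat{\partial}f(x^{k,\nu_{k}})$ throughout. Hence $v \in \partial f(\bar{x})$. The subtle point to double-check here is that the two convergences $x^{k,\nu_{k}} \to \bar{x}$ and $v^{k,\nu_{k}} \to v$ really can be arranged simultaneously, but since both occur as $\nu \to \infty$ for fixed $k$, choosing $\nu_{k}$ sufficient for both tolerances is legitimate, so the diagonal sequence is exactly what Definition \ref{generalsubgradient}(2) asks for.
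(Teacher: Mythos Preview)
Your argument is correct. The paper does not actually supply a proof of this proposition: it is quoted verbatim from Rockafellar and Wets \cite[Theorem~8.6]{RW09} as background material, so there is no ``paper's own proof'' to compare against. Your direct verification---the constant-sequence argument for the inclusion, the liminf reformulation for convexity and closedness of $\hat{\partial}f(\bar{x})$, and the diagonal extraction for closedness of $\partial f(\bar{x})$---is exactly the standard route taken in \cite{RW09}, and each step is sound as written.
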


%------------------------------------------------------------------------------ Proposition 2.2
 \begin{proposition}\cite[Theorem 10.1]{RW09}\label{prop22}
 If a proper function $f: \Re^n \to \Re\cup \{\infty\}$ has a local minimum at $\bar{x}$, then $0 \in \hat{\partial}f(\bar{x}) \subset \partial f(\bar{x})$.
 \end{proposition}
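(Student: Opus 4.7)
The plan is to read the two claims off directly from the definitions and the previously cited proposition. The statement has two parts: first, that $0 \in \hat{\partial}f(\bar{x})$ whenever $\bar{x}$ is a local minimum, and second, that $\hat{\partial}f(\bar{x}) \subset \partial f(\bar{x})$ in general. The second inclusion was already recorded in the preceding proposition, so it requires nothing more than a citation.

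For the main content, I would simply test the candidate subgradient $v = 0$ against the defining inequality from Definition \ref{generalsubgradient}, item 1. That definition demands
\[
f(x) \geq f(\bar{x}) + v^{T}(x - \bar{x}) + o(\|x - \bar{x}\|),
\]
and with $v = 0$ this collapses to $f(x) \geq f(\bar{x}) + o(\|x - \bar{x}\|)$. Since $\bar{x}$ is a local minimum, there exists a neighborhood $U$ of $\bar{x}$ on which $f(x) \geq f(\bar{x})$, and this is actually stronger than what is required: the remainder term can be taken identically zero on $U$, which is vacuously $o(\|x - \bar{x}\|)$. Outside the little-$o$ neighborhood the condition is not a constraint, so the inequality holds in the sense needed. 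Therefore $0 \in \hat{\partial}f(\bar{x})$.

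Combining this with the already-stated inclusion $\hat{\partial}f(\bar{x}) \subset \partial f(\bar{x})$ from the previous proposition finishes the argument. There is essentially no obstacle here; the entire proof is a one-line verification of the regular subgradient inequality at $v=0$, relying on the trivial fact that the local minimum condition $f(x) - f(\bar{x}) \geq 0$ dominates the zero remainder term. The only point worth being slightly careful about is interpreting the little-$o$ notation correctly, namely that it allows any nonnegative dominated remainder (including the zero function) rather than requiring a strict asymptotic behavior.
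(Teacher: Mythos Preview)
Your argument is correct. Note, however, that the paper does not supply its own proof of this proposition: it is stated with a direct citation to \cite[Theorem~10.1]{RW09} and no argument is given. Your verification---testing $v=0$ in the defining inequality of Definition~\ref{generalsubgradient} and observing that the local-minimum condition $f(x)\ge f(\bar x)$ near $\bar x$ already delivers the required lower bound with zero remainder---is exactly the standard proof and is what one finds in Rockafellar--Wets.
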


%----------------------------------------------------------------------------------------------
% Subsection 2.2
 \subsection{Smoothing functions of $\lvert x\rvert$ via density functions}\label{subsec:smoothingfunctions}

 We recall the general definition of a smoothing function. 
 \begin{definition}\cite[Definition 1]{C12}\label{def-smoothingfunction}
 Let $h: \Re^n \to \Re$ be a continuous function. We say that $\phi: \Re_{++} \times \Re^n \to \Re$ is a smoothing function  
 of $h$ if it satisfies the following:
 \begin{description}
 \item[(i)] $\phi(\mu, \cdot)$ is continuously differentiable for any $\mu>0$;

 \item[(ii)] $\ds \lim_{w\to z,\mu\downarrow 0}\phi(\mu,w) = h(z)$ for any $z\in \Re^n$.
 \end{description} 
 \end{definition}

 \alert{To construct a smoothing function for the absolute value function, we 
 briefly recall from \cite{C12,CM96} that the plus function $(x)_+ = \max \{ 
 x,0\}$ for $x\in \Re$ can be smoothly approximated by 
  \begin{equation}\label{smoothplus}
 	\hat{\phi}(\mu,x) = \int_{-\infty}^{+\infty}(x-t)_+\hat{t}(\mu,t)dt = 
 	\int_{-\infty}^{x}(x-t)\hat{t}(\mu,t)dt,
 \end{equation}
where $\hat{t}(\mu,t) \coloneqq \frac{1}{\mu}\rho\left(\frac{t}{\mu}\right),$ 
and 
$\rho:\Re\to \Re_+$ is a a piecewise continuous density 
function\footnote{\alert{That is, $\rho$ is a nonnegative function whose 
integral over $\Re$ is 1. Consequently, it is easy to see that 
$\hat{t}(\mu,x)\to \delta(x)$ as $\mu\to 0$ for all $x\in\Re$, where $\delta$ 
is the Dirac delta function provided that $\rho(0)>0$.}} that satisfies
 \begin{equation}\label{kernelcondition}
 \rho(x) = \rho(-x)\ \mbox{and}\ \kappa \coloneqq 
 \int_{-\infty}^{+\infty}\lvert x\rvert\rho(x)dx < +\infty.
 \end{equation}

Using the fact that $\lvert x\rvert = (x)_+  + (-x)_+$, we obtain a smoothing function for the absolute value function as follows:
 \begin{equation}\label{smoothabsolutevalue}
 \phi(\mu,x) \coloneqq \hat{\phi}(\mu,x) + \hat{\phi}(\mu,-x) = 
 \int_{-\infty}^{+\infty}\lvert x-t\rvert\hat{t}(\mu,t)dt.
 \end{equation}

 In a manner similar to \cite[Proposition 2.2]{CM96},} we have the following properties of $\phi(\mu,x)$.

%----------------------------------------------------------------------------- Proposition 3.2
 \begin{proposition}\label{properties-smoothabsolutevalue}
 Suppose that $\phi(\mu,x)$ is defined as in (\ref{smoothabsolutevalue}). Then, for a fixed $\mu>0$, we have
 \begin{description}
 \item [(a)] $\phi(\mu,\cdot)$ is continuously differentiable.
 \item [(b)] $0\leq \phi(\mu,x) -  \lvert x\rvert \leq \kappa\mu$ for all $x\in 
 \Re$ and $\mu>0$, where the constant $\kappa>0$ is defined in 
 \eqref{kernelcondition}.
 \item [(c)] $\phi'(\mu,x)$ is bounded satisfying
             $-1 \leq \phi'(\mu,x) \leq 1$ for all $x\in \Re$, $\mu>0$.
 \end{description}
 \end{proposition}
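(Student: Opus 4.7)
The plan is to obtain all three items by reducing to the corresponding properties of $\hat{\phi}$ in Proposition \ref{properties-smoothplus}, exploiting the decomposition $\phi(\mu,x) = \hat{\phi}(\mu,x) + \hat{\phi}(\mu,-x)$ together with the symmetry $\rho(x)=\rho(-x)$ in \eqref{kernelcondition}.

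For part (a), I would simply invoke Proposition \ref{properties-smoothplus}(a): for each fixed $\mu>0$ the map $x\mapsto \hat{\phi}(\mu,x)$ is $C^1$, hence so is the composition $x\mapsto \hat{\phi}(\mu,-x)$, and the sum of two $C^1$ functions is $C^1$. For part (c), differentiating in $x$ gives $\phi'(\mu,x)=\hat{\phi}'(\mu,x)-\hat{\phi}'(\mu,-x)$, and since by Proposition \ref{properties-smoothplus}(c) each term lies in $[0,1]$, the difference lies in $[-1,1]$.

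The substantive step is part (b). Here the observation to make is that the symmetry of $\rho$ forces $D_2=0$ and $D_1=\kappa/2$: indeed $x\rho(x)$ is odd, so $\int_{-\infty}^{+\infty} x\rho(x)\,dx=0$ and thus $D_2=\max\{0,0\}=0$; moreover by the change of variable $x\mapsto -x$,
\begin{equation*}
D_1=\int_{-\infty}^{0}|x|\rho(x)\,dx=\int_{0}^{+\infty}x\rho(-x)\,dx=\int_{0}^{+\infty}x\rho(x)\,dx=\tfrac{1}{2}\kappa.
\end{equation*}
Plugging these into Proposition \ref{properties-smoothplus}(b) (as already noted in the text just after that proposition) yields $0\le \hat{\phi}(\mu,x)-(x)_+\le \tfrac{1}{2}\kappa\mu$ and, applied at $-x$, $0\le \hat{\phi}(\mu,-x)-(-x)_+\le \tfrac{1}{2}\kappa\mu$. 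Summing the two inequalities and using $|x|=(x)_++(-x)_+$ gives exactly $0\le \phi(\mu,x)-|x|\le \kappa\mu$.

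There is no real obstacle; the only point that requires a moment's thought is recognizing that the evenness of $\rho$ from \eqref{kernelcondition} is precisely what collapses the two-sided bound of Proposition \ref{properties-smoothplus}(b) into the clean one-sided bound $0\le \hat{\phi}(\mu,x)-(x)_+\le \tfrac{\kappa}{2}\mu$ needed to obtain the stated constant $\kappa$ in (b). Once this is observed, the three assertions follow immediately by the additive structure of $\phi$.
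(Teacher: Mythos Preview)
Your proof is correct and follows essentially the same route as the paper: the paper does not write out a detailed argument but simply refers to Proposition~\ref{properties-smoothplus} (and has already recorded the sharpened bound $0\le \hat{\phi}(\mu,x)-(x)_+\le \tfrac{1}{2}\kappa\mu$ just before), so your reduction via $\phi(\mu,x)=\hat{\phi}(\mu,x)+\hat{\phi}(\mu,-x)$ and $|x|=(x)_++(-x)_+$ is exactly what the paper has in mind.
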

 From Proposition~\ref{properties-smoothabsolutevalue}, given any sequence 
 $\{(x^k,\mu_k\}\subset \Re$ such that $x^k\to x\in \Re$ and $\mu_k \downarrow 
 0$,  we have 
  \begin{equation*}\label{sequence-smoothabsolute}
 \lim_{k\to \infty}\phi(\mu_k,x^k) = \lvert x\rvert , \quad \forall x\in \Re 
 \end{equation*}
 and 
 %% \begin{equation}\label{bounded-sequence-smoothabsolute}
 %% \left\{\lim_{x^k\to x,\mu_k\downarrow 0}\phi'(\mu_k,x^k) \right\} \subseteq 
 %% \left\{
 %% \begin{array}{ccc}
 %% \{ 1\} & \mbox{if}\ x>0\\
 %% \{ -1\} & \mbox{if}\ x<0\\
 %% \left[-1,1\right] & \mbox{if}\ x=0.
 %% \end{array}
 %% \right.
 %% \end{equation}
  \begin{equation}\label{bounded-sequence-smoothabsolute}
 \lim_{k\to\infty}\phi'(\mu_k,x^k) = \sgn (x) \quad \forall x\neq 0.
 \end{equation} 
We also have from Proposition~\ref{properties-smoothabsolutevalue}(c) that 
subsequential limits of the sequence on the left-hand side of 
$\{\phi'(\mu_k,x^k)\}$ exist and belong to $[-1,1]$.
%  
% There are various choices of the density function $\rho$ and corresponding smoothing function $\phi$. Many instances will be presented in the section of numerical experiments. 

 \medskip

 \section{\alert{Necessary conditions}}
 
Using Proposition~\ref{prop22}, the first-order optimality condition for the 
lower-level problem \eqref{lower-levelproblem} is given by
\[
0 \in \partial_{\omega}(G(\omega^*, \bar{\lambda}) +  \lambda_1 R_1(\omega^*)),
\]
where $\partial_{\omega}(G(\omega^*, \bar{\lambda})+\lambda_1 R_1(\omega ^*) ) 
$ is the general subgradient with respect to $\omega$ of $G(\omega, 
\bar{\lambda}) + \lambda_1 R_1 (\omega) $ at $\omega^*$. Then problem 
\eqref{bileveloptimization} can be transformed into the one-level 
problem given in \eqref{first-orderoptimality}.
%\begin{equation} \label{first-orderoptimality}
%\begin{array}{cc}
%\min\limits_{\omega,\lambda} & f(\omega) \\
%{\rm s.t} & 0 \in \partial_{\omega}(G(\omega, \bar{\lambda}) +  \lambda_1 
%R_1(\omega)) \\
%&  \alert{(\lambda_1,\barlambda)\in \Omega_{\epsilon} }.
%\end{array}
%\end{equation} 

\subsection{Scaled Bilevel KKT Conditions}
\alert{In \cite{OTKW21}, a smooth version of the lower-level problem of 
\eqref{first-orderoptimality} when $R_1(\w) = \|\w\|_p^p$ was proposed using 
the scaled first-order 
necessary condition initiated by Chen, Xu and Ye \cite{CXY10} for non-Lipschitz 
continuous functions. Since 
the function $G(\omega, \bar{\lambda}) +  \lambda_1 \|\w\|_p^p$ may be 
non-Lipschitz, the scaled first-order necessary 
condition for the lower-level problem \eqref{lower-levelproblem} proposed in
\cite{OTKW21}, which was adapted from \cite{CXY10}, can be extended to our 
setting with 
$R_1$ given by \eqref{eq:R1}, as in Definition~\ref{defn:scaled_first_order}. 
In particular, when $\psi (t) \equiv t$, the following definition reduces to 
the scaled first-order necessary condition given in \cite{OTKW21}.}

%--------------------------------------------------------------------- 
%Definition 2.1
\begin{definition}\label{defn:scaled_first_order}
	We say that $\omega^*$ satisfies the scaled first-order necessary condition 
	of \eqref{lower-levelproblem} if
	\begin{equation*}\label{scaledfirst-ordercondition}
	W_*\nabla_{\omega}G(\omega^*,\bar{\lambda}) + p\lambda_1\lvert W_*\rvert^p 
	\psi'(\lvert\omega^*\rvert^p) = 0,
	\end{equation*}
	where $W_* \coloneqq {\rm diag}(\omega^*)$, $\lvert W_*\rvert^p \coloneqq 
	{\rm 
	diag}(\lvert\omega^*\rvert^p)$, and 
	\[
	\psi'(\lvert\omega^*\rvert^p) \coloneqq (\psi'(\lvert\omega_1^*\rvert^p), 
	\psi'(\lvert\omega_2^*\rvert^p), \dots, \psi'(\lvert\omega_n^*\rvert^p))^T.
	\]
\end{definition}
Using this scaled first-order necessary condition, \alert{one can} obtain the 
following 
one-level problem.
\begin{equation} \label{scaledone-levelproblem}
\begin{array}{cc}
\min\limits_{\omega,\lambda} & f(\omega) \\
{\rm s.t} & W\nabla_{\omega}G(\omega,\bar{\lambda}) + p\lambda_1 \lvert 
W\rvert^p \psi'(\lvert\omega\rvert^p) = 0 \\
& \lambda \alert{\in \Omega_{\epsilon}},
\end{array}
\end{equation}
where $W \coloneqq {\rm diag}(\omega)$, $\lvert W\rvert^p \coloneqq {\rm 
diag}(\lvert\omega\rvert^p)$, and
\[
\psi'(\lvert\omega\rvert^p) \coloneqq (\psi'(\lvert\omega_1\rvert^p), 
\psi'(\lvert\omega_2\rvert^p), \dots, \psi'(\lvert\omega_n\rvert^p))^T.
\]
\alert{Though this problem looks different from \eqref{first-orderoptimality}, 
the
	following proposition\footnote{The proof is analogous to \cite[Lemma 
		3]{OTKW21}.} indicates that the problems are indeed identical when 
		$p\in 
	(0,1)$. However, when $p=1$, the feasible 
	region of 
	problem \eqref{first-orderoptimality} is contained in that of 
	\eqref{scaledone-levelproblem}.}

%--------------------------------------------------------------------------- 
%Lemma 2.1
\begin{proposition}\label{prop:scaled-lower-level}
	For $\omega \in \Re^n$ and $\lambda \in \Re_+^r$, if $0 \in 
	\partial_{\omega}(G(\omega, \bar{\lambda}) +  \lambda_1 R_1(\omega))$, then 
	$W\nabla_{\omega}G(\omega,\bar{\lambda}) + p\lambda_1 \lvert W\rvert^p 
	\psi'(\lvert\omega\rvert^p) = 0$. In particular, when $p<1$, the converse 
	is also true.
\end{proposition}
%From the above proposition, we observe that the feasible region of 
%\eqref{scaledone-levelproblem} is larger than the feasible region of 
%\eqref{first-orderoptimality} when $p=1$. Moreover, the feasible regions of 
%problems \eqref{first-orderoptimality} and \eqref{scaledone-levelproblem} are 
%identical when $p<1$. 
\medskip 

\alert{Based on this scaling, one can extend the scaled bilevel KKT (SB-KKT) 
conditions proposed in \cite{OTKW21} to our general setting of 
\eqref{bileveloptimization} as in the following definition. }

\medskip

%--------------------------------------------------------------------------- 
%Definition 2.2
\begin{definition}\label{SBKKTpoint}
	We say that $(\omega^*,\lambda^*)\in \Re^n \times \Re^r$ is a scaled 
	bilevel Karush-Kuhn-Tucker (SB-KKT) point for problem 
	\eqref{bileveloptimization} if there exists a pair of vectors $(\zeta^*, 
	\eta^*) \in \Re^n \times \Re^r$ such that
	\begin{eqnarray}
	& W^2_*\nabla f(\omega^*) + H(\omega^*,\lambda^*)\zeta^* = 0, 
	\label{SBKKT1}\\
	& W_*\nabla_{\omega}G(\omega^*,\bar{\lambda}^*) + p\lambda^*_1\lvert 
	W_*\rvert^p \psi'(\lvert\omega^*\rvert^p) = 0, \label{SBKKT2}\\
	& p\sum_{j\notin I(\omega^*)}{\rm 
	sgn}(\omega^*_j)\lvert\omega_j^*\rvert^{p-1}\psi'(\lvert\omega_j^*\rvert^p) 
	\zeta^*_j = \eta^*_1, \label{SBKKT3}\\
	& \zeta^*_j = 0 \ (j \in I(\omega^*)), \label{SBKKT4}\\
	& \nabla R_j(\omega^*)^T \zeta ^*- \eta^*_j = 0 \ (j=2,3, \dots, r), 
	\label{SBKKT5} \\
	& \alert{\lambda^*- \epsilon e_1\geq 0, \quad 
	\eta^* \geq 0, 
	\quad 
	(\lambda^* - \epsilon e_1)^T \eta^* 
	= 0}, \label{SBKKT6} 
	\end{eqnarray}
	where $W_* \coloneqq {\rm diag}(\omega^*)$, and \alert{$e_1=(1,0,\dots, 
	0)^T\in 
	\Re^r$}. 
	Here, we write
	\[
	H(\omega,\lambda) = W^2\nabla^2_{\omega \omega }G(\omega,\bar{\lambda}) + 
	\lambda_1 p(p-1){\rm diag}(\lvert W\rvert^p\psi'(\lvert\omega\rvert^p)) + 
	\lambda_1 p^2{\rm diag}(\lvert W\rvert^{2p}\psi''(\lvert\omega\rvert^p))
	\]
	with $W \coloneqq {\rm diag}(\omega)$, $\lvert W\rvert^p \coloneqq {\rm 
	diag}(\lvert\omega\rvert^p)$, and $\lvert W\rvert^{2p} \coloneqq {\rm 
	diag}(\lvert\omega\rvert^{2p})$ for $\omega \in \Re^n$ and $\lambda \in 
	\Re^r$.
\end{definition}

\alert{The SB-KKT conditions are necessary optimality conditions for problem 
\eqref{scaledone-levelproblem} as asserted in the following result, whose proof 
is essentially similar to \cite[Theorem 2]{OTKW21}. }

%------------------------------------------------------------------ Proposition 
%2.1
\begin{proposition}\label{prop:sbkkt_is_necessary_for_relaxed}
	Let $(\omega^*, \lambda^*) \in \Re^n \times \Re^r$ be a local optimum of 
	\eqref{scaledone-levelproblem}. Then, $(\omega^*, \lambda^*)$ together with
	%some vectors $\{\zeta^*, \eta^*\} \in \Re^n\times \Re^r$
	some pair of vectors $(\zeta^*, \eta^*)\in \Re^n\times \Re^r$
	satisfies the SB-KKT conditions \eqref{SBKKT1}-\eqref{SBKKT6} under an 
	appropriate constraint qualification concerning the constraints 
	$\frac{\partial G(\omega, \bar{\lambda})}{\partial \omega_j} + p {\rm 
	sgn}(\omega_j)\lambda_1\lvert\omega_j\rvert^{p-1}\psi'(\lvert\omega_j\rvert^p)
	 = 0\;  (j \notin I(\omega^*))$, $\omega_j = 0\;  (j \in I(\omega^*))$, and 
	$\lambda \in \Omega_{\epsilon}$. 
\end{proposition}

%%------------------------------------------------------------------ 
%%Proposition 
%\begin{corollary}
%	Let $p < 1$ and $(\omega^*, \lambda^*) \in \Re^n \times \Re^r$ be a local 
%	optimum of \eqref{first-orderoptimality}. Then, $(\omega^*, \lambda^*)$ 
%	together with some pair of vectors
%	$(\zeta^*, \eta^*)\in \Re^n\times \Re^r$ satisfies the SB-KKT conditions 
%	\eqref{SBKKT1}-\eqref{SBKKT6} under an appropriate constraint qualification 
%	concerning the constraints $\frac{\partial G(\omega, 
%	\bar{\lambda})}{\partial \omega_j} + p {\rm 
%	
%sgn}(\omega_j)\lambda_1\lvert\omega_j\rvert^{p-1}\psi'(\lvert\omega_j\rvert^p)
%	 = 0\;  (j \notin I(\omega^*))$, $\omega_j = 0\;  (j \in I(\omega^*))$, and 
%	$\lambda \geq 0$.
%\end{corollary}

\subsection{\alertsection{Bilevel KKT Conditions}}
An immediate consequence of 
Propositions \ref{prop:scaled-lower-level} and 
\ref{prop:sbkkt_is_necessary_for_relaxed} is that when $p\in (0,1)$, a local 
optimum of the one-level problem \eqref{first-orderoptimality} satisfies the 
SB-KKT conditions under appropriate constraint qualifications. However, one 
main drawback of the SB-KKT conditions presented in the preceding section is 
that the process of ``scaling'' enlarges the feasible region of the relaxed 
one-level problem \eqref{first-orderoptimality} when $p=1$. In the following 
definition, we propose an alternative necessary condition which avoids the 
multiplication by $W$ and $W^2$ as defined in Definition~\ref{SBKKTpoint}.

\begin{definition}\label{defn:bkkt}
	We say that $(\omega^*,\lambda^*) \in \Re^n \times \Re^r$ is a bilevel 
	Karush-Kuhn-Tucker point (BKKT point) for problem 
	\eqref{bileveloptimization} if 
	there exists $(\zeta^*,\eta^*)\in 
	\Re^{n}\times 
	\Re^{r}$ such that
		\begin{eqnarray}
		& \nabla_{\tomega} f(\omega^*) + \widetilde{H}(\w^*,{\lambda}^*)  
		\tzeta^*= 0, 
		\label{bkkt1}\\
%		& \nabla_{\tomega} f(\omega^*) + H(\w^*,{\lambda}^*)  \tzeta^* + 
%\nabla^2 _{\tomega\hatomega}G(\omega^*,\bar{\lambda^*}) \hatzeta^*= 0, 
%\label{bkkt1}\\
		& \nabla_{\tomega}G(\omega^*,\bar{\lambda}^*) + p\lambda^*_1 
		\psi'(\lvert\tomega^*\rvert^p) \odot \abs{\tomega^*}^{p-1} \odot 
		\sgn(\tomega^*)= 0, \label{bkkt2}\\
%		& \nabla_{\hatomega}G(\omega^*,\bar{\lambda}^*) = 0 ~\text{if 
%		}\lambda_1^* = 0, \\
		&\hatzeta^* = 0 \ ,\label{bkkt3}\\
		& \left( 
		\psi'(\lvert\tomega^*\rvert^p) \odot \abs{\tomega^*}^{p-1} \odot 
		\sgn(\tomega^*) \right)^T \tzeta^* = \eta_1^*, \label{bkkt4}\\
		& \nabla_{\tomega} \bar{R}(\w^*)^T \tzeta^* - \bareta^* = 0, \ 
		\label{bkkt5} \\
		&  \lambda^*- \epsilon e_1 \geq 0, \quad 
		\eta^* \geq 0, 
		\quad 
		(\lambda^* - \epsilon e_1 )^T \eta^* 
		= 0,
		\label{bkktlast} 
	\end{eqnarray}
where 
{
\begin{eqnarray*}
&\widetilde{H}(\w,\lambda) \coloneqq 
\nabla_{\tomega\tomega}^2G(\w,\barlambda) 
+ \lambda_1p(p-1)\abs{\tilde{W}}^{p-2}\psi'(\abs{\tomega}^p)+p^2\lambda_1 
\abs{\tilde{W}}^{2p-2}\psi''(\abs{\tomega}^p),&\\
&\abs{\tilde{W}} \coloneqq {\rm diag}(\abs{\tomega}),\ \tomega ^* \coloneqq 
(\w^*_i)_{i\notin I(\omega^*)},&\\
&\hatomega ^* \coloneqq (\w^*_i)_{i\in 
	I(\w^*)},\
	\tzeta ^* \coloneqq 
(\zeta^*_i)_{i\notin I(\omega^*)},\ \hatzeta ^* \coloneqq 
(\zeta^*_i)_{i\in I(\w^*)}. &
\end{eqnarray*}}
\end{definition}
We show in the following propositions that the proposed bilevel KKT conditions 
in Definition~\ref{defn:bkkt} are necessary conditions for the one-level 
relaxation \eqref{first-orderoptimality} of the original bilevel problem 
\eqref{bileveloptimization}. In other words, the scaling used in the preceding 
section, as extended from the prior work \cite{OTKW21}, is not needed. 

\begin{proposition}\label{prop:p<1_BKKT}
	Let $p<1$ and $(\omega^*, \lambda^*) \in \Re^n \times \Re^r$ be a local 
	optimum of 
	\eqref{first-orderoptimality}. Then, $(\omega^*, \lambda^*)$ is a BKKT 
	point under an 
	appropriate constraint qualification concerning the constraints 
	$\frac{\partial G(\omega, \bar{\lambda})}{\partial \omega_j} + p {\rm 
		sgn}(\omega_j)\lambda_1\lvert\omega_j\rvert^{p-1}\psi'(\lvert\omega_j\rvert^p)
	= 0\;  (j \notin I(\omega^*))$, $\omega_j = 0\;  (j \in I(\omega^*))$, and 
	$\lambda \in \Omega_{\epsilon}$. 
%	 concerning the constraints 
%	$\frac{\partial G(\omega, \bar{\lambda})}{\partial \omega_j} + p {\rm 
%		
%sgn}(\omega_j)\lambda_1\lvert\omega_j\rvert^{p-1}\psi'(\lvert\omega_j\rvert^p)
%	= 0\;  (j \notin I(\omega^*))$, $\frac{\partial G(\omega, 
%	\bar{\lambda})}{\partial \omega_j} =0$ ($j\in I(\w^*)$) $\omega_j = 0\;  (j 
%	\in I(\omega^*))$, and 
%	$\lambda \geq 0$. 
\end{proposition}
\begin{proof}
	Let $(\w^*,\lambda^*)\in\Re^n\times \Re^r$ be a local minimum of 
	\eqref{first-orderoptimality}, and consider the problem
%		\begin{equation} \label{first-orderoptimality-local}
%		\begin{array}{cc}
%		\min\limits_{\omega,\lambda} & f(\omega) \\
%		{\rm s.t} & 0 = \nabla_{\tomega} G(\w,\barlambda) + p\lambda_1 
%		\psi'(\lvert\tomega\rvert^p) \odot \abs{\tomega}^{p-1} \odot 
%		\sgn(\tomega), \quad \tomega = (\w_i)_{i\notin I(\w^*)}\\
%		&  0 \in \nabla_{\hatomega} G(\w,\barlambda) + p\lambda_1 
%		\psi'(0) \odot  \partial (\abs{\hatomega}^p)\big\rvert_{\hatomega=0} , 
%		\quad \hatomega = (\w_i)_{i\in I(\w^*)}\\
%		& \hatomega = 0 \\ 
%		& \lambda \geq 0.
%		\end{array}
%		\end{equation} 
%	Suppose first that $\lambda_1^* > 0$, and consider the problem
			\begin{equation} \label{first-orderoptimality-local-lambda1>0}
			\begin{array}{cc}
			\min\limits_{\omega,\lambda} & f(\omega) \\
			{\rm s.t} &  \nabla_{\tomega} G(\w,\barlambda) + p\lambda_1 
			\psi'(\lvert\tomega\rvert^p) \odot \abs{\tomega}^{p-1} \odot 
			\sgn(\tomega) = 0 , \\
			& \hatomega = 0 , \\ 
			& \lambda \in \Omega_{\epsilon},
			\end{array}
			\end{equation} 
	where $\tomega = (\w_i)_{i\notin I(\w^*)}$ and $\hatomega = (\w_i)_{i\in I(\w^*)}$. We claim that $(\w^*,\lambda^*)$ is a local minimum of 
	\eqref{first-orderoptimality-local-lambda1>0}. It is clear that 
	$(\w^*,\lambda^*)$ is feasible to 
	\eqref{first-orderoptimality-local-lambda1>0}. Moreover, if $(\w,\lambda)$ 
	is a feasible point of \eqref{first-orderoptimality-local-lambda1>0}, 
	it follows from the first equality constraint that $\tomega\neq 0$ since 
	$p\in (0,1)$ and $\abs{\tomega}^{p-1} \in \Re^{n-\abs{I(\omega^*)}}$. 
	Then with the first equality constraint in 
	\eqref{first-orderoptimality-local-lambda1>0} together with the fact that 
	$\partial_{\hatomega}  \left( G(\w,\barlambda) + 
	\lambda_1 R_1(\w)\right)  = 
	\Re^{\abs{I(\w^*)}}$, it 
	immediately follows that
	$0 \in \partial_{\omega}(G(\omega, \bar{\lambda}) +  \lambda_1 
	R_1(\omega))$, and therefore $(\w,\lambda)$ belongs to the feasible region 
	of \eqref{first-orderoptimality}. As $(\w^*,\lambda^*)$ is a local minimum 
	of \eqref{first-orderoptimality} and the feasible region of 
	\eqref{first-orderoptimality-local-lambda1>0} is contained in that of 
	\eqref{first-orderoptimality}, we indeed have that $(\w^*,\lambda^*)$ is a 
	local minimum of \eqref{first-orderoptimality-local-lambda1>0}. Hence, 
	there exist 
	Lagrange 
	multipliers 
	$(\hat{\zeta}^{(1)},\hat{\zeta}^{(2)},\hat{\eta})\in\Re^{n-\abs{I(\w^*)}}\times
	 \Re^{\abs{I(\w^*)}} \times \Re^r$ such that 
		\begin{eqnarray}
		\begin{bmatrix}
		\nabla_{\tomega} f(\w^*) \\
		\nabla_{\hatomega} f(\w^*) \\
		0 \\
		0		
		\end{bmatrix} + 
		\left[\begin{array}{cc}
	 	 \widetilde{H}(\w^*,\barlambda^*) & 
		0 \\
		\nabla_{\hatomega\tomega}^2 G(\w^*,\barlambda^*) & I \\
		p\psi'(\lvert\tomega ^*\rvert^p) \odot \abs{\tomega^*}^{p-1} \odot 
		\sgn(\tomega^*) & 0 \\
		\nabla_{\tomega}\bar{R}(\w^*)^T & 0
		\end{array}\right ] \begin{bmatrix}
		\hat{\zeta}^{(1)} \\ 
		\hat{\zeta}^{(2)}
		\end{bmatrix} - \begin{bmatrix}
		0 \\ 
		0 \\
		\hat{\eta}_1 \\ 
		\bar{\hat{\eta}}
		\end{bmatrix} = 0 , \label{kkt1_lambda1>0}\\
		\nabla_{\tomega} G(\w^*,\barlambda)^* + p\lambda_1 ^*
		\psi'(\lvert\tomega^*\rvert^p) \odot \abs{\tomega^*}^{p-1} \odot 
		\sgn(\tomega^*) = 0 ,\\
		\lambda^*- \epsilon e_1 \geq 0, \quad 
		\eta^* \geq 0, 
		\quad 
		(\lambda^* - \epsilon e_1)^T \eta^* 
		= 0	, 
		\label{kktlast_lambda1>0}			
		\end{eqnarray}		
		where $\bar{\hat{\eta}}=(\hat{\eta}_2,\hat{\eta}_3 
		,\dots,\hat{\eta}_r)^T$. 
%		Since $\lambda_1^*>0$, it follows from 
%		\eqref{kktlast_lambda1>0} that 
%		$\hat{\eta}_1^* =0$ and so from 
%		\eqref{kkt1_lambda1>0}, we obtain $p\psi'(\lvert\tomega ^*\rvert^p) 
%		\odot 
%		\abs{\tomega^*}^{p-1} \odot 
%		\sgn(\tomega^*) = 0$. 
		Taking $\eta^* = \hat{\eta}$ and setting $\zeta^* \in \Re^n$ such that 
		$\zeta_i^* 
		= 
		\hat{\zeta}^{(1)}_i$ for $i\notin I(\w^*)$ and $\zeta_i^* = 0$ for 
		$i\in 
		I(\w^*)$, one can easily check 
		that all the conditions 
		\eqref{bkkt1}-\eqref{bkktlast} are satisfied.
		
%		Consider now the case that $\lambda_1^* = 0$, and consider the problem 
%		\begin{equation} \label{first-orderoptimality-local-lambda1=0}
%		\begin{array}{cc}
%		\min\limits_{\omega,\lambda} & f(\omega) \\
%		{\rm s.t} &  \nabla_{\w} G(\w,\barlambda) = 0 , \\
%		& \hatomega = 0 , \quad \hatomega = (\w_i)_{i\in I(\w^*)}\\ 
%		& \lambda_1 =0, \quad \bar{\lambda} \geq 0.
%		\end{array}
%		\end{equation}
%		Similar to the previous case, it can be shown that $(\w^*,\lambda^*)$ 
%		is a local minimum of \eqref{first-orderoptimality-local-lambda1=0} by 
%		arguing that it is feasible to 
%		\eqref{first-orderoptimality-local-lambda1=0} and the feasible region 
%		of \eqref{first-orderoptimality} contains the feasible region of 
%		\eqref{first-orderoptimality-local-lambda1=0}. Hence, we can obtain 
%		Lagrange multipliers $(\zeta^*,\eta^*)\in \Re^n \times \Re^r$ that 
%		satisfies the KKT conditions of 
%		\eqref{first-orderoptimality-local-lambda1=0}. It is straightforward to 
%		show that $(\w^*,\lambda^*)$ together with $(\zeta^*,\bar{\eta}^*)$ 
%		where $\bar{\eta}^* = (\eta_2^*,\dots,\eta_r^*)$ satisfies the BKKT 
%		conditions \eqref{bkkt1}-\eqref{bkktlast}. 
\end{proof}

For the case $p=1$, we obtain a similar result provided that the feasibility 
condition
	\begin{equation}\label{eq:interiortosubdifferential}
		\|\nabla_{\hatomega}G(\w^*,\barlambda^*) \|_{\infty} < \lambda_1^* 
		\psi'(0)
	\end{equation}
holds, where $\hatomega = 
(\w_i)_{i\in I(\w^*)}$. We note that this is 
equivalent to saying that $-\frac{1}{\lambda_1^*} \frac{\partial G}{\partial 
\w_i}(\w^*,\barlambda^*)$ belongs to the interior of the subdifferential set 
$\partial \psi (\abs{t}) 
\vert_{t=0}$ for all $i\in I(\w^*)$. This condition is used in the convergence 
analysis of the smoothing algorithm in \cite{OTKW21}, but its connection with 
the necessary conditions for solutions of \eqref{first-orderoptimality} was not 
explored. This precise connection is revealed in the following proposition. 

\begin{proposition}
	Let $p=1$ and $(\omega^*, \lambda^*) \in \Re^n \times \Re^r$ be a local 
	optimum of 
	\eqref{first-orderoptimality} that satisfies 
	\eqref{eq:interiortosubdifferential}. Then, $(\omega^*, \lambda^*)$ is a 
	BKKT 
	point under an 
	appropriate constraint qualification concerning the constraints 
	$\frac{\partial G(\omega, \bar{\lambda})}{\partial \omega_j} + p {\rm 
		sgn}(\omega_j)\lambda_1\lvert\omega_j\rvert^{p-1}\psi'(\lvert\omega_j\rvert^p)
	= 0\;  (j \notin I(\omega^*))$, $\omega_j = 0\;  (j \in I(\omega^*))$, and 
	$\lambda \in \Omega_{\epsilon}$. 
	%	 concerning the constraints 
	%	$\frac{\partial G(\omega, \bar{\lambda})}{\partial \omega_j} + p {\rm 
		%		
		%sgn}(\omega_j)\lambda_1\lvert\omega_j\rvert^{p-1}\psi'(\lvert\omega_j\rvert^p)
	%	= 0\;  (j \notin I(\omega^*))$, $\frac{\partial G(\omega, 
		%	\bar{\lambda})}{\partial \omega_j} =0$ ($j\in I(\w^*)$) $\omega_j = 
		%0\;  (j 
	%	\in I(\omega^*))$, and 
	%	$\lambda \geq 0$. 
\end{proposition}
\begin{proof}
	We consider a problem similar to 
	\eqref{first-orderoptimality-local-lambda1>0} but with 
	\eqref{eq:interiortosubdifferential} as an added inequality constraint, 
	that is, 
	\begin{equation} \label{first-orderoptimality-local-lambda1-p=1}
		\begin{array}{cc}
			\min\limits_{\omega,\lambda} & f(\omega) \\
			{\rm s.t} &  \nabla_{\tomega} G(\w,\barlambda) + \lambda_1 
			\psi'(\lvert\tomega\rvert) \odot 
			\sgn(\tomega) = 0 , \\
			& \hatomega = 0 , \\ 
			& \lambda \in \Omega_{\epsilon} \\
			& \|\nabla_{\hatomega}G(\w,\barlambda) \|_{\infty} < \lambda_1 
			\psi'(0) ,
		\end{array}
	\end{equation} 
where $\tomega = (\w_i)_{i\notin I(\w^*)}$ and $\hatomega = (\w_i)_{i\in I(\w^*)}$. Note that $\|\nabla_{\hatomega}G(\w,\barlambda) \|_{\infty} < \lambda_1 
\psi'(0)$ is a non-binding inequality 
constraint of \eqref{first-orderoptimality-local-lambda1-p=1}. Hence, following 
the proof of Proposition~\ref{prop:p<1_BKKT}, it 
suffices to show 
that $(\w^*,\lambda^*)$ is feasible to 
\eqref{first-orderoptimality-local-lambda1-p=1} and that the feasible region of 
\eqref{first-orderoptimality-local-lambda1-p=1} is contained in that of 
\eqref{first-orderoptimality}. The former is clear due to our hypothesis. To 
show the inclusion of the feasible regions, let $(\w,\lambda)$ be a feasible 
point of \eqref{first-orderoptimality-local-lambda1-p=1}. If $i\in I(\w^*)$, 
then $\w_i = 0$, which together with \eqref{eq:interiortosubdifferential} 
implies that $0\in \partial_{\omega_i}(G(\w,\barlambda)+\lambda_1 R_1(\w))$. If 
$i\notin I(\w^*)$ and $\w_i \neq 0$, it is clear that $0\in 
\partial_{\omega_i}(G(\w,\barlambda)+\lambda_1 R_1(\w))$ from the first 
equality constraint in \eqref{first-orderoptimality-local-lambda1-p=1}. On the 
other hand, if $i\notin I(\w^*)$ but $\w_i=0$, we also have from the first 
equality 
constraint in \eqref{first-orderoptimality-local-lambda1-p=1} that $\partial 
G(w,\barlambda)/\partial \w_ i =0$. Since $0\in \partial \psi 
(\abs{t})\vert_{t=0}$, we again have $0\in 
\partial_{\omega_i}(G(\w,\barlambda)+\lambda_1 R_1(\w))$. This completes the 
proof. 
\end{proof}

\begin{remark}\label{remark:p=1} We make some comments about the case $p=1$.
	\begin{description}
		\item[\rm (a)] For this case, we always need to assume that a candidate 
		bilevel KKT 
		point
		$(\w^*,\lambda^*)$ satisfies inequality 
		\eqref{eq:interiortosubdifferential}. This will also be the standing 
		assumption for our subsequent analysis when dealing with the case of 
		$p=1$, as we shall see in the next section.
		\item[\rm (b)] Note that if $g$ and $R_j$, 
		$j=2,\dots, r$ are convex functions, then we obtain a 
		stronger result that \emph{the bilevel \tblue{KKT} conditions are necessary 
		conditions 
			for the original bilevel problem \eqref{bileveloptimization}} under 
		appropriate constraint 
		qualifications, rather than just necessary conditions for the relaxed 
		problem \eqref{first-orderoptimality}, which is the situation when 
		$p\in 
		(0,1)$ (even if the functions $g$ and $R_j$ are all convex). Hence, in 
		this case, bilevel KKT points are indeed candidate solutions to the 
		bilevel problem \eqref{bileveloptimization}.  
	\end{description}
	  
\end{remark}
 \section{Proposed algorithm \alert{and its convergence}}
 
In this section, we describe our smoothing algorithm for \eqref{bileveloptimization} and present our convergence results.
 
 \subsection{Smoothing approach and the algorithm}
 One main source of difficulty in solving the bilevel program \eqref{bileveloptimization} is the nonsmooth, nonconvex and possibly non-Lipschitz component $R_1(\omega)=\sum _{i=1}^n \psi (\lvert\omega_i\rvert^p)$, where $p\in (0,1]$.
     To overcome this difficulty, we apply the smoothing technique to $R_1$ with the smoothing function $\phi$ defined in the previous section, yielding the following smooth function:

     %% Our goal is to approximate our problem \eqref{bileveloptimization} by a one-level smooth optimization problem.
     %% By virtue of Assumption~(A) and our differentiability assumptions on $R_i$ ($i=2,\dots , r$) and $g$, our objective can be accomplished using the KKT conditions for the lower-level problem provided that the chosen smooth approximation of $R_1$ is twice continuously differentiable. It is not difficult to show that such a smoothing function of $R_1$ can be obtained by choosing a smooth approximation $\phi(\mu,x)$ of $\lvert x\rvert$ that is strictly positive on $\Re$. In this case, the function
  \begin{equation}\label{Smooth-R1}
 \varphi_{\mu}(\w) \coloneqq \sum_{j=1}^{n}\psi\left([\phi(\mu, \w_j)]^p\right).
 \end{equation}
  Then, as in \cite{OTKW21}, we consider problem\,\eqref{bileveloptimization} with $\varphi_{\mu}$ in place of $R_1$, and further replace the obtained smoothed lower-level problem with its first-order condition. Hence, the following problem is obtained: 
%  \noindent is a smoothing function of $R_1$ that is twice continuously differentiable.  Consequently, \tred{the following one-level problem
%approximates problem \eqref{bileveloptimization}:}
\begin{equation} \label{one-levelproblem}
\begin{array}{cc}
\min\limits_{\omega,\lambda} & f(\omega) \\
{\rm s.t} & \nabla_{\omega}G(\omega, \bar{\lambda}) + \lambda_1\nabla\varphi_{\mu}(\omega) = 0 \\
& \alert{\lambda \in \Omega_{\epsilon}}.
\end{array}
\end{equation}

      Next, let us suppose that $\varphi_{\mu}$ is twice continuously differentiable from this moment and recall Assumption~(A) together with our differentiability assumptions on $R_i$ ($i=2,\dots , r$) and $g$. These properties enable us to consider the KKT conditions.\footnote{Without the $\mathcal{C}^2$ property of $\varphi_{\mu}$, the KKT conditions cannot be taken because the constraint function $\nabla_{\omega}G(\omega, \bar{\lambda}) + \lambda_1\nabla\varphi_{\mu}(\omega)$ is not necessarily smooth.}
    %% Moreover, since $\varphi_{\mu }$ is twice continuously differentiable,
    By virtue of this fact, we can find candidate solutions to \eqref{one-levelproblem} by looking at its KKT points.

    In fact, it is sufficient to obtain approximate KKT points:
Given a parameter $\hat{\varepsilon}>0$, we define an 
$\hat{\varepsilon}$-approximate KKT point for problem \eqref{one-levelproblem} 
as follows: We say that $\{(\omega, \lambda, \zeta, \eta)\} \subseteq \Re^n 
\times \Re^r \times \Re^n \times \Re^r$ is an $\hat{\varepsilon}$-approximate 
KKT point for \eqref{one-levelproblem} if there exists a vector 
$\hat{\varepsilon} = 
(\varepsilon_1,\varepsilon_2,\varepsilon_3,\varepsilon_4,\varepsilon_5) \in 
\Re^n \times \Re \times \Re^{r-1} \times \Re^n \times \Re$ such that
\begin{eqnarray}
& \nabla f(\omega) + (\nabla^2_{\omega\omega}G(\omega, \bar{\lambda}) + \lambda_1\nabla^2\varphi_{\mu}(\omega))\zeta = \varepsilon_1, \label{KKT1}\\
& \nabla\varphi_{\mu}(\omega)^T \zeta - \eta_1 = \varepsilon_2, \label{KKT2}\\
& \nabla R_j(\omega)^T \zeta - \eta_j = (\varepsilon_3)_{\alert{j-1}} \ (j=2,3, \dots, r), \label{KKT3}\\
& \nabla_{\omega}G(\omega, \bar{\lambda}) + \lambda_1\nabla\varphi_{\mu}(\omega) = \varepsilon_4, \label{KKT4}\\
&\lambda - \epsilon e_1 \geq 0, \quad \eta \geq 0 , \quad  (\lambda- \epsilon 
e_1)^T \eta 
= \varepsilon_5, \label{KKT5} 
\end{eqnarray}
and
\[
\|\alert{(\varepsilon_1^T,\varepsilon_2^T,\varepsilon_3^T,\varepsilon_4^T,\varepsilon_5^T)^T}\| \leq \hat{\varepsilon},
\]
where $\nabla^2_{\omega\omega}G(\omega, \bar{\lambda})$ is the Hessian of $G$ with respect to $\omega$. Note that when $\hat{\varepsilon} = 0$, an $\hat{\varepsilon}$-approximate KKT point is identical to a KKT point. 

\medskip 

Now, by iteratively computing an $\hat{\varepsilon}$-approximate KKT point 
while decreasing $\hat{\varepsilon}$ and the smoothing parameter $\mu$, we 
obtain the smoothing algorithm presented in Algorithm~\ref{algorithm}.

\medskip 

\begin{algorithm} 
\caption{(A Smoothing Method for Nonsmooth Bilevel Optimization)}\label{algorithm}
%\begin{algorithmic}[1]
	\begin{description}
		\item [Step 0] Choose $\mu_0\alert{>} 0$, $\beta_1,\beta_2 \in (0,1)$ 
		and $\hat{\varepsilon}_0\geq 0$. Set $k\coloneqq0$.
		
		\item [Step 1]  Find an $\hat{\varepsilon}_k$-approximate KKT point $\{(\omega^{k+1}, \lambda^{k+1}, \zeta^{k+1}, \eta^{k+1})\}$ for problem \eqref{one-levelproblem} with $\mu=\mu_k$.
		
		\item [Step 2] Set $\mu_{k+1}=\beta_1\mu_k$,  
		$\hat{\varepsilon}_{k+1}=\beta_2\hat{\varepsilon}_k$ and 
		$k\coloneqq k+1$.
	\end{description}
%\end{algorithmic}
\end{algorithm}
  Algorithm\,\ref{algorithm} is quite similar to the one proposed by Okuno~et al\,\cite{OTKW21}. However, whereas Okuno~et al supposed to employ only $\sum_{i=1}^n(\w_i^2+\mu^2)^{\frac{p}{2}}$ as the smoothing function $\varphi_{\mu}$, Algorithm\,\ref{algorithm} enjoys much more freedom in choices.

{In our convergence analysis, we assume that at every iteration, an 
$\hat{\varepsilon}_k$-approximate KKT 
point can always be computed.} In order to establish the global convergence of 
Algorithm\,\ref{algorithm}, we will require some more properties of the density 
function $\rho$ used for constructing the smoothing function $\phi$, which also 
accomplish the $\mathcal C^2$-property of $\varphi_{\mu}$ supposed above.

\medskip

%------------------------------------------------------------------------------
  \subsection{Convergence analysis}

\subsubsection{{Assumptions on density 
function}}\label{smoothingapproach}
%% We note that the above discussion was under the premise that $\phi (\mu,x)$ 
%%is a strictly positive function, which is one among other important 
%%properties 
%%that we require of $\phi(\mu,x)$. Meanwhile,
We have mentioned in the Introduction that the 
setting of all our analysis is 
based on density functions. That is, we wish to prove all our convergence 
results by solely looking at density functions used to induce the smoothing 
functions. To this end, we must be able to identify necessary properties of a 
given density function so that Algorithm~\ref{algorithm} converges to a 
candidate solution of the main problem \eqref{bileveloptimization}. Indeed, one
novelty of our work is precisely the identification of these required 
properties and its application to the convergence analysis. 

\medskip
We summarize necessary assumptions on the density function $\rho$ that we will 
use in the next subsection.

\medskip 

\medskip 

\noindent \textbf{Assumption \alert{(B)}.} Let $\rho: \Re \to \Re_+$ be a 
density 
function. Then, the following properties hold:
\begin{description}[font=\normalfont]
	\item[(B1)] $\rho$ is symmetric, i.e. $\rho(x)=\rho(-x)$ for all $x\in \Re$.
	\item[(B2)] $\rho$ is continuous and nonincreasing on $[0,\infty)$.
	\item[(B3)] There exist positive constants $c,r>0$ such that 
	\[
	2\int\limits_{0}^{S}\rho(x) ~dx  \geq 1 - \frac{c}{S^r + c} ~~ \mbox{for 
	all} ~ S\geq 0.
	\]
	\item [(B4)] If $p=1$, we have $\rho(x)>0$ for all $x\in \Re$.
\end{description}

Some remarks are in order: First,
although Assumption~(B1) was already supposed in Section 
\ref{subsec:smoothingfunctions}, we have restated it for later use. Under this 
assumption, note that the smoothing function $\phi(\mu,x)$ is strictly positive 
for all $\mu>0$ and $x\in \Re$. Indeed, we already have from Proposition 
\ref{properties-smoothabsolutevalue}(b) that $\phi(\mu,x)>0$ for $x\neq 0$. On 
the other hand, since $\rho$ is a symmetric density function by Assumption 
(B1), then $\rho$ is not identical to the zero function on the interval 
$[0,\infty)$. Consequently, $\int_{0}^{+\infty}t\rho(t)dt >0$, which together 
with \eqref{smoothplus} and \eqref{smoothabsolutevalue} yields $\phi(\mu,0)>0$. 
Moreover, we can easily calculate the first and second derivatives of the 
induced $\phi(\mu,x)$ as
\begin{equation}\label{phi_firstder}
\phi'(\mu,x) = 2 \mathrm{sgn}(x)\int_{0}^{\lvert 
x\rvert}\frac{1}{\mu}\rho\left(\frac{t}{\mu}\right) \; dt = 2 
\mathrm{sgn}(x)\int_{0}^{\frac{\lvert x\rvert}{\mu}}\rho(t)\; dt,
\end{equation}
and
\begin{equation}\label{phi_secondder}
\phi''(\mu,x) = \frac{2}{\mu}\rho\left(\frac{x}{\mu}\right),	
\end{equation}
respectively. From equation \eqref{phi_secondder} and strict positivity of 
$\phi(\mu,x)$, we see that $\phi(\mu, \cdot)$ is twice continuously 
differentiable approximation of $\lvert x\rvert$ by the continuity assumption 
in (B2). 

Assumptions~(B1) and (B2) will also have other important roles in the proofs of 
our main result. \alert{Among them are formulas for the limits of $\{ 
	(\nabla\varphi_{\mu_{k-1}}(\omega^k))_j\}$ and $\{ 
	(\nabla^2\varphi_{\mu_{k-1}}(\omega^k))_{jj}\}$ when $j\notin I(\omega^*)$, 
	where $\varphi_{\mu }$ is the smooth function given by \eqref{Smooth-R1}.
	
%	First, it can be easily calculated that
\tblue{First, with easy calculation,} 
	the components of 
	$\nabla\varphi_{\mu}(\omega) \in \Re^n$ are given by 
	\begin{equation}\label{Gradient-varphi}
	(\nabla\varphi_{\mu}(\omega))_j = p \psi'\left([\phi(\mu, 
	\omega_j)]^p\right) \phi'(\mu, \omega_j)[\phi(\mu, \omega_j)]^{p-1},
	\end{equation}
	while $\nabla^2\varphi_{\mu}(\omega)\in \Re^{n\times n}$ is a diagonal 
	matrix 
	whose diagonal entries are given by 
	\begin{align}
	(\nabla^2\varphi_{\mu}(\omega))_{jj} &= p^2\psi''\left([\phi(\mu, 
	\omega_j)]^p\right)[\phi'(\mu, \omega_j)[\phi(\mu, \omega_j)]^{p-1}]^2 
	\notag{}\\
	&+ p(p-1)\psi'\left([\phi(\mu, \omega_j)]^p\right) [\phi'(\mu, 
	\omega_j)]^2[\phi(\mu, \omega_j)]^{p-2} \notag{}\\ 
	&+ p\psi'\left([\phi(\mu, \omega_j)]^p\right)  [\phi(\mu, \omega_j)]^{p-1} 
	\phi''(\mu, \omega_j),  \label{Hessian-matrix}
	\end{align}
	for $j=1,\dots, n$. 	
%	Note that
%	the smoothing function $\phi(\mu,x)$ is strictly positive under Assumption~(B1) 
\tblue{Since $\phi(\mu,x)$ is strictly positive under Assumption~(B1),} the factor $[\phi(\mu,\omega_j)]^{p-1}$ that appears in 
	\eqref{Gradient-varphi} and \eqref{Hessian-matrix} is real-valued. In 
	addition, 
	it is clear from \eqref{Hessian-matrix} that the components of the Hessian 
	of 
	$\varphi_{\mu }$ are continuous by Assumption~(A), equation 
	\eqref{phi_secondder}, and Assumptions~(B1)-(B2).}

\alert{We now list some important formulas but will be useful in our subsequent 
	analysis. }

\alert{\begin{lemma}\label{lemma:limit_gradient_hessian}
		Suppose that Assumptions~(B1)-(B2) hold. Let $\{ (\omega ^k, 
		\mu_{k-1})\} 
		\subseteq \Re^n \times \Re_{++} $ be an arbitrary sequence converging 
		to 
		$(\omega ^*, 0)$. Then for $j\notin I(\omega^*)$,
		\begin{align}
		\lim_{k \to \infty}(\nabla\varphi_{\mu_{k-1}}(\omega^k))_j & = 
		p\mathrm{sgn}(\omega^*_j) 
		\lvert\omega^*_j\rvert^{p-1}\psi'\left(\lvert\omega^*_j\rvert^{p}\right),
		\label{prop41.1}\\
		\lim_{k\to\infty}(\nabla^2\varphi_{\mu_{k-1}}(\omega^k))_{jj} &= 
		p^2\psi''\left(\lvert\omega^*_j\rvert^{p}\right)\lvert\omega^*_j\rvert^{2p-2}
		+ 
		p(p-1)\psi'\left(\lvert\omega^*_j\rvert^{p}\right)\lvert\omega^*_j\rvert^{p-2}.
		\label{prop41.2}
		\end{align}
	\end{lemma}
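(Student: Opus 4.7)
The plan is to pass to the limit componentwise in the explicit formulas \eqref{Gradient-varphi} and \eqref{Hessian-matrix} for $(\nabla\varphi_\mu(\omega))_j$ and $(\nabla^2\varphi_\mu(\omega))_{jj}$. Since $j\notin I(\omega^*)$ means $\omega_j^*\neq 0$, the convergence $\omega_j^k\to\omega_j^*$ ensures that $\omega_j^k$ is eventually nonzero with $\mathrm{sgn}(\omega_j^k)=\mathrm{sgn}(\omega_j^*)$. The two claimed limits will then follow by substituting the limits of $\phi$, $\phi'$, and $\phi''$ (each evaluated at $(\mu_{k-1},\omega_j^k)$) into \eqref{Gradient-varphi} and \eqref{Hessian-matrix} and invoking the continuity of $\psi'$ and $\psi''$ guaranteed by Assumption (A).

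Two of the three needed limits are essentially already available from the preliminaries. Proposition \ref{properties-smoothabsolutevalue}(b) gives $\phi(\mu_{k-1},\omega_j^k)\to|\omega_j^*|$, and \eqref{bounded-sequence-smoothabsolute} gives $\phi'(\mu_{k-1},\omega_j^k)\to\mathrm{sgn}(\omega_j^*)$ because $\omega_j^*\neq 0$. The main obstacle is the third limit, namely $\phi''(\mu_{k-1},\omega_j^k)\to 0$, which is \emph{not} automatic: from \eqref{phi_secondder} and the symmetry (C1), $\phi''(\mu_{k-1},\omega_j^k)=(2/\mu_{k-1})\rho(S_k)$ where $S_k:=|\omega_j^k|/\mu_{k-1}$. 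Rewriting as $(2S_k/|\omega_j^k|)\rho(S_k)$ and using $|\omega_j^k|\to|\omega_j^*|>0$ and $S_k\to\infty$, everything reduces to proving that $T\rho(T)\to 0$ as $T\to\infty$.

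This tail-decay property is exactly where Assumptions (C2) and (C3) enter. Symmetry and (C3) together yield $2\int_S^\infty \rho(t)\,dt\leq c/(S^r+c)$ for every $S\geq 0$. Monotonicity of $\rho$ on $[0,\infty)$ from (C2) then gives the elementary bound $S\rho(2S)\leq \int_S^{2S}\rho(t)\,dt\leq c/(2(S^r+c))$, hence $2S\rho(2S)\leq c/(S^r+c)\to 0$. Thus $T\rho(T)\to 0$ as $T\to\infty$, which closes the argument that $\phi''(\mu_{k-1},\omega_j^k)\to 0$.

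With the three limits $\phi\to|\omega_j^*|$, $\phi'\to\mathrm{sgn}(\omega_j^*)$, and $\phi''\to 0$ in hand, substitution into \eqref{Gradient-varphi} yields \eqref{prop41.1} directly. For \eqref{prop41.2}, the same substitution into \eqref{Hessian-matrix} sends the first two terms to the two claimed terms on the right-hand side, while the third term vanishes thanks to the factor $\phi''\to 0$ together with the boundedness of $\psi'([\phi]^p)$ and $[\phi]^{p-1}$ (the latter being bounded because $\phi\to|\omega_j^*|>0$). I expect the delicate point to be the tail estimate $T\rho(T)\to 0$, and I anticipate no difficulty beyond that, since every remaining step is a continuity-based passage to the limit.
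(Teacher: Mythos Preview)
Your approach is essentially the same as the paper's: reduce to the three limits $\phi\to|\omega_j^*|$, $\phi'\to\mathrm{sgn}(\omega_j^*)$, $\phi''\to 0$, and substitute into \eqref{Gradient-varphi} and \eqref{Hessian-matrix}. The first two limits and the final substitution are handled identically in both.

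There is one genuine gap. The lemma assumes only (C1)--(C2), but your argument for the tail decay $T\rho(T)\to 0$ invokes (C3). This overshoots the hypotheses. The fix is immediate and already implicit in what you wrote: your inequality $S\rho(2S)\le\int_S^{2S}\rho(t)\,dt$ (from monotonicity in (C2)) is bounded above by $\int_S^\infty\rho(t)\,dt$, and the latter tends to zero simply because $\rho$ is an integrable density. No quantitative rate from (C3) is needed.

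For comparison, the paper handles $\phi''\to 0$ a bit differently: it first uses monotonicity to replace $\rho(|\omega_j^k|/\mu_{k-1})$ by $\rho(|\omega_j^*|/(2\mu_{k-1}))$ (valid once $|\omega_j^k|>|\omega_j^*|/2$), and then appeals to the pointwise statement $\frac{1}{\mu}\rho(x/\mu)\to\delta(x)=0$ for $x\neq 0$ from the preliminaries. That appeal is somewhat informal; unwound rigorously it is precisely the fact $T\rho(T)\to 0$ that you isolate. Your route is therefore cleaner once the unnecessary (C3) is dropped.
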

	
	\medskip 
	
	\begin{proof}
		Let $j\notin I(\omega^*)$. We have from 
		\eqref{bounded-sequence-smoothabsolute} that
		\[
		\lim_{k\to \infty}\phi'(\mu_{k-1}, \omega^k_j) = 
		\mathrm{sgn}(\omega^*_j).
		\]
		Since $\phi(\mu,x)$ is a smoothing function of $\lvert x \rvert$ that 
		is 
		strictly positive by Assumption~(B1), we have $[\phi(\mu_{k-1}, 
		\omega^k_j)]^{p-1} \to  \lvert\omega^*_j\rvert^{p-1}$ as $k\to \infty$. 
		Moreover, since $\psi$ is $\mathcal C^2$ by Assumption~(A), then we 
		easily 
		obtain \eqref{prop41.1} by letting $k\to\infty$ in equation 
		\eqref{Gradient-varphi} with $\w = \w^k$ and $\mu = \mu_{k-1}$. 
		
		For equation \eqref{prop41.2}, we first show that $\lim_{k\to\infty} 
		\phi''(\mu_{k-1}, \omega^k_j) =0$, which by \eqref{phi_secondder} is 
		equivalent to showing that
		\begin{equation}\label{claim,limit=0}
		\lim_{k\to\infty} 
		\frac{1}{\mu_{k-1}}\rho\left(\frac{\omega^k_j}{\mu_{k-1}}\right) = 
		\lim_{k\to\infty} 
		\frac{1}{\mu_{k-1}}\rho\left(\frac{\lvert\omega^k_j\rvert}{\mu_{k-1}}\right)
		=0.
		\end{equation}
		Since $j\notin I(\omega^*)$, then 
		$\lvert\omega^*_j\rvert/2<\lvert\omega^k_j\rvert$ for all $k$ 
		sufficiently 
		large. Thus,
		\[0\leq \frac{1}{\mu_{k-1}} \rho \left( \frac{\omega _j^k}{\mu_{k-1}} 
		\right) \leq \frac{1}{\mu_{k-1}}\rho \left( \frac{\omega 
			_j^*}{2\mu_{k-1}} 
		\right) \to \delta (\omega_j^*\alert{/2})=0 \qquad 
		\text{as}~k\to\infty,\]
		by invoking Assumptions~(B1)-(B2) and the definition of the Dirac delta 
		function. This proves \eqref{claim,limit=0}. Finally, taking the limit 
		in 
		\eqref{Hessian-matrix} when $k\to \infty$, we obtain
		\begin{align*}
		&\lim_{k\to\infty}(\nabla^2\varphi_{\mu_{k-1}}(\omega^k))_{jj}\\
		&= 
		p^2\psi''\left(\lvert\omega^*_j\rvert^{p}\right)[\mbox{sgn}(\omega^*_j)\lvert\omega^*_j\rvert^{p-1}]^2
		+ 
		p(p-1)\psi'\left(\lvert\omega^*_j\rvert^{p}\right)[\mbox{sgn}(\omega^*_j)]^2\lvert\omega^*_j\rvert^{p-2}\\
		&= 
		p^2\psi''\left(\lvert\omega^*_j\rvert^{p}\right)\lvert\omega^*_j\rvert^{2p-2}
		+ 
		p(p-1)\psi'\left(\lvert\omega^*_j\rvert^{p}\right)\lvert\omega^*_j\rvert^{p-2}.
		\end{align*}
		This completes the proof of the lemma. 
\end{proof}}

On the other hand, \alert{the other technical assumption on 
	$\rho$, namely 
	(B3), 
	is} important in our subsequent analysis. Without knowing 
definitively the formula for $\phi (\mu,x)$, the analysis becomes extremely 
difficult. In particular, it is challenging to understand the convergence 
behavior of the sequence $\left\lbrace S_j^k \right\rbrace$, where 
\begin{equation}\label{Sjk}
S_j^k \coloneqq \frac{\lvert\omega_j ^k\rvert}{\mu_{k -1}}, 	
\end{equation}
$\omega^k = (\omega_1^k, \dots, \omega _n^k )$ and $\mu_{k-1}$ are generated 
from Algorithm~\ref{algorithm} when $\omega_j^k \to 0$ as $k\to\infty$ for some 
$j\in \{1,\dots, n\}$. Nonetheless, this problem can be alleviated thanks to 
the simple Assumption~(B3). 
Finally, Assumption~(B4) will later be important in proving the 
unboundedness of the sequence $\left\lbrace 
\lvert\nabla^2\varphi_{\mu_{k-1}}(\omega^k) )_{jj}\rvert\right\rbrace$ when 
$p=1$ and $\omega_j^k \to 0$ as $k\to \infty$. Interestingly, we shall see 
shortly that (B4) is not needed for the case $p\in (0,1)$. 

\alert{In the forthcoming discussions}, we will 
see in great detail how these assumptions on 
$\rho$ will play a central role in establishing the main convergence result.   
In \alert{
	Appendix~\ref{app:examples_smoothing}}, we will provide some specific examples 
	of 
density functions satisfying Assumption~(B).

%---------------------------------------------------------------------------
\subsubsection{Subsequential convergence}

We now prove our main result that accumulation points of the sequence generated 
by Algorithm~\ref{algorithm} are in fact \alert{bilevel KKT points (see 
Definition~\ref{defn:bkkt}), which in turn are candidate solutions for 
\eqref{first-orderoptimality} when $p<1$, and candidate solutions for the original bilevel problem \eqref{bileveloptimization}}. \alert{As mentioned in Remark~\ref{remark:p=1}, 
we will 
assume that for any given accumulation point of such sequence generated by 
Algorithm~\ref{algorithm}, the inequality \eqref{eq:interiortosubdifferential} 
holds when $p=1$}. 

%----------------------------------------------------------------------------------------------------------
% Theorem 3.2
\begin{theorem}\label{thm:convergence-analysis}
	
	\alert{ Let $p\in (0,1]$ and assume that $(\omega ^*, 
	\lambda^*,\zeta^*,\eta^*)$ is any accumulation 
	point of 
		a 
		sequence $\{(\omega^k, \lambda^k,\zeta^k,\eta^k)\}$ generated by 
		Algorithm 
		\ref{algorithm}. Then 
		$(\omega^*,\lambda^*)$ is a 
		bilevel KKT point for the original problem 
		\eqref{bileveloptimization} provided that Assumptions~(A) and (B) 
		hold}. 
\end{theorem}

\alert{To prove Theorem 
	\ref{thm:convergence-analysis}, we show that 
	$(\omega^*,\lambda^*,\zeta^*,{\eta}^*)$ satisfies equations 
	\eqref{bkkt1}-\eqref{bkktlast}. 
	%	The main difficulty lies in showing that 
	%	$\hatzeta ^ * = 0 $ when $\lambda_1^* > 0$ in order to prove 
	%\eqref{bkkt3}. 
	%	To this end, we prove a series of lemmas.}
	To this end, we prove a series of lemmas, and in particular, we do the 
	following: }

\begin{description}[font=\normalfont]
	\item[(i)] Prove that $\{ S_j^k\}_{k\in K}$ is bounded, where $S_j^k$ 
	is given by \eqref{Sjk}, $j\in I(\omega ^*)$, $K\subset \{ 
	1,2,\dots,\}$ such that $(\omega ^k,\lambda^k) \to (\omega^*, 
	\lambda^*)$ as $k\in K \to \infty$ and $\{ (\omega ^k, \lambda^k)\}$ is 
	generated by Algorithm~\ref{algorithm}; 
	%		Such an index set $K$ exists by Assumption~(B2); 
	\item[(ii)] \alert{Using the boundedness of $\{ S_j^k\}_{k\in K}$, we 
		compute} the limit of the sequence $\left\lbrace 
	(\nabla^2\varphi_{\mu_{k-1}}(\omega^k) )_{jj}\right\rbrace_{k\in K}$, 
	where $j\in I(\omega ^*)$ and the index set $K$ is as described in (i); 
	%		\item[(iii)] Compute the limit of the sequences $\left\lbrace 
	%(\nabla\varphi_{\mu_{k-1}}(\omega^k) )_{j}\right\rbrace$ and $\left\lbrace 
	%(\nabla^2\varphi_{\mu_{k-1}}(\omega^k) )_{jj}\right\rbrace$, where 
	%$j\notin 
	%I(\omega ^*)$; 
	%		\item[(iv)] Prove that the sequence $\{(\zeta ^k , \eta ^k) \}$ 
	%generated by Algorithm~\ref{algorithm} is bounded, and thus, accumulation 
	%points of the sequence of iterates exist;
	%		\item[(iii)] Prove that any accumulation point $(\omega^*, 
	%\lambda^*, 
	%		\zeta^*, \eta^*)$ of the sequence $\{(\omega^k, \lambda^k, \zeta^k, 
	%		\eta^k)\}$ generated by Algorithm~\ref{algorithm} satisfies 
	%equations 
	%		\eqref{SBKKT3} and \eqref{SBKKT4}; and finally,
	%		\item[(vi)] Compute the limits of the sequences $\left\lbrace W_k 
	%\nabla\varphi_{\mu_{k-1}}(\omega^k) \right\rbrace _{k\in K}$ and 
	%$\left\lbrace 
	%W_k^2\nabla^2\varphi_{\mu_{k-1}}(\omega^k) \right\rbrace _{k\in K}$ for an 
	%arbitrary sequence $\{(\omega ^k, \mu_{k-1} )\}$ converging to $(\omega 
	%^*, 
	%0)$, where $W_k \coloneqq {\rm diag}(\omega ^k)$.
	\item[\alert{(iii)}] \alert{Using (ii) and Lemma 
		\ref{lemma:limit_gradient_hessian}, we show that $\hatzeta^* = 0$ and 
		equation \eqref{bkkt4} holds.}
\end{description}

The above objectives are formally stated and proved, respectively, in Lemma 
\ref{lemma:boundedratio} to Lemma~\ref{lemma:lagrange_multipliers}. We will 
prove these results without 
knowledge of the exact formula for the smoothing function $\phi (\mu,x)$ used 
to construct $\varphi_{\mu }$ given by \eqref{Smooth-R1}, that is, only using 
Assumption~(B) on the density function.	

\medskip 
\alert{To facilitate our subsequent analysis, we note here that a sequence 
	$\{(\omega^k,\lambda^k,\zeta^k,\eta^k)\}$ generated by Algorithm 
	\ref{algorithm} satisfies 
	\begin{eqnarray}
	& \nabla f(\omega^k) + (\nabla^2_{\omega\omega}G(\omega^k, \bar{\lambda}^k) 
	+ 
	\lambda_1^k\nabla^2\varphi_{\mu_{k-1}}(\omega^k))\zeta^k = 
	\varepsilon_1^{k-1}, \label{KKT1_k}\\
	& \nabla\varphi_{\mu_{k-1}}(\omega^k)^T \zeta^k - \eta_1^k = 
	\varepsilon_2^{k-1}, 
	\label{KKT2_k}\\
	& \nabla R_j(\omega^k)^T \zeta^k - \eta_j^k = 
	(\varepsilon_3^{k-1})_{\alert{j-1}} \ 
	(j=2,3, \dots, r), \label{KKT3_k}\\
	& \nabla_{\omega}G(\omega^k, \bar{\lambda}^k) + 
	\lambda_1^k\nabla\varphi_{\mu_{k-1}}(\omega^k) = \varepsilon_4^{k-1}, 
	\label{KKT4_k}\\
	& \lambda^k- \epsilon e_1 \geq 0, \quad \eta^k \geq 0 , \quad  (\lambda^k- 
	\epsilon e_1)^T \eta^k = 
	\varepsilon_5^{k-1}, 
	\label{KKT5_k} 
	\end{eqnarray}
	for all $k$. }

\medskip 
In \cite{OTKW21}, the authors proved that when $\psi (t) = t$ and $\phi (\mu,x) 
= \sqrt{x^2+\mu^2}$ and under Assumptions~(B1)-(B3), there exists some $\gamma 
>0$ such that
\begin{equation}
\mu_{k-1}^2 \geq \gamma \lvert\omega_j^k\rvert^{\frac{2}{2-p}} \quad (j\in 
I(\omega^*))
\label{eq:okunoineq}
\end{equation}
for all sufficiently large $k\in K$, where $K$ is the subsequence described in 
(i). This result is especially important in proving results related 
to (ii)-\alert{(iii)} above. 
However, in order to derive inequality\,\eqref{eq:okunoineq}, 
\cite{OTKW21} takes advantage of the specific function 
$\phi(\mu,x)$ chosen, which is not the case in the present work. 
Nevertheless, we have found out that such a strong result is not necessarily 
required to prove (ii)-\alert{(iii)}. In particular, it suffices to establish 
(i), which is indeed a weaker property. To this end, Assumption~(B3) will play 
a very significant role without which the analysis becomes extremely difficult.

\medskip

\medskip 
We now prove our first lemma which establishes property (i). 

\begin{lemma}\label{lemma:boundedratio}
	\alert{ Suppose that \alert{Assumptions~(B1)-(B3) hold}. Let 
	$(\omega^*,\lambda^*)$ be an arbitrary accumulation point of the sequence 
	$\{(\omega^k, \lambda^k)\}$ generated by Algorithm~\ref{algorithm}, and let 
	$\{(\omega^k, \lambda^k)\}_{k\in K}$ be an arbitrary subsequence converging 
	to $(\omega^*,\lambda^*)$. For any $j\in I(\omega ^*)$, $\{S_j^k\}_{k\in 
	K}$ is bounded. Moreover, $S_j^k \to 0$ as $k\in K \to \infty$ if 
	$p\in (0,1)$.}
\end{lemma}
\begin{proof}
	Denote
	\begin{equation}\label{Partial-G}
	F_j (\omega ^k, \bar{\lambda}^k) : = \frac{\partial G(\omega ^k, 
	\bar{\lambda}^k)}{\partial \omega _j}.
	\end{equation}
	\alert{From equations \eqref{KKT4_k}} and \eqref{Gradient-varphi}, we have
	\begin{equation}\label{KKT4-2}
	F_j (\omega ^k , \bar{\lambda}^k) + p \lambda_1 ^k \psi'\left([\phi (\mu 
	_{k-1}, \omega _j^k)]^{p}\right)\phi'(\mu _{k-1}, \omega _j ^k) [\phi (\mu 
	_{k-1}, \omega _j^k)]^{p-1} = (\varepsilon _4^{k-1})_j .
	\end{equation}
	\noindent \textbf{Case 1.} Suppose that $p=1$. Then 
	\begin{equation}\label{lemma:boundedratio-1}
	F_j (\omega ^k , \bar{\lambda}^k) +  \lambda_1^k \psi'(\phi (\mu _{k-1}, 
	\omega _j^k))\phi'(\mu _{k-1}, \omega _j ^k) = (\varepsilon _4^{k-1})_j .
	\end{equation}
	Rearranging the terms and using Assumptions~(A) and (B3), there are 
	constants 
	$c,r>0$ such that for sufficiently large $k$,
	\[
	\frac{\lvert(\varepsilon _4^{k-1})_j - F_j (\omega ^k , \bar{\lambda}^k) 
	\rvert }{\lambda _1 ^k\psi'(\phi (\mu _{k-1}, \omega _j^k))} = \lvert \phi 
	' 
	(\mu _{k-1}, \omega _j ^k)\rvert = 2 \int _0 ^{S_j^k} \rho (s)~ds\geq 1 - 
	\frac{c}{(S_j^k)^r + c},  \]
	where the second equality holds by \eqref{phi_firstder}, and 
	$\lambda_1^k>0$ 
	for sufficiently large $k$ \alert{since $\lambda_1^* \geq \epsilon > 0$}. 
	Consequently, we get
	\[
	\begin{array}{rl}
	\ds \frac{c}{(S_j^k)^r + c} & \ds \geq 1 - \frac{\lvert(\varepsilon 
	_4^{k-1})_j - F_j (\omega ^k , \bar{\lambda}^k) \rvert }{\lambda _1 
	^k\psi'(\phi (\mu _{k-1}, \omega _j^k))}  \\
	\ds &  \ds 	=\frac{\lambda _1^k\psi'(\phi (\mu _{k-1}, \omega _j^k)) - 
	\lvert(\varepsilon _4^{k-1})_j - F_j (\omega ^k , \bar{\lambda}^k) 
	\rvert}{\lambda_1^k\psi'(\phi (\mu _{k-1}, \omega _j^k))}  
	\end{array}
	\]
	Note that $\lambda _1^k\psi'(\phi (\mu _{k-1}, \omega _j^k)) - 
	\lvert(\varepsilon _4^{k-1})_j - F_j (\omega ^k , \bar{\lambda}^k) \rvert> 
	0$ 
	for all large $k$ \alert{by using the fact that 
	\eqref{eq:interiortosubdifferential} holds at $(\w^*,\lambda^*)$ when 
	$p=1$ and by invoking} Proposition 
	\ref{properties-smoothabsolutevalue}(c) and \eqref{lemma:boundedratio-1}. 
	Then 
	\begin{align*}
	0\leq  \frac{(S_j^k)^r}{c} \leq \frac{(S_j^k)^r + c}{c} &\leq 
	\frac{\lambda_1^k\psi'(\phi (\mu _{k-1}, \omega _j^k))}{\lambda 
	_1^k\psi'(\phi 
	(\mu _{k-1}, \omega _j^k)) - \lvert(\varepsilon _4^{k-1})_j - F_j (\omega 
	^k , 
	\bar{\lambda}^k) \rvert}\\
	& \to \frac{\lambda_1^*\psi'(0) }{\lambda_1^*\psi'(0) - \lvert F_j(\omega 
	^*, 
	\bar{\lambda}^*)\rvert} ~\text{as}~k\in K\to \infty
	\end{align*}
	where the finiteness of the limit is guaranteed by 
	\alert{\eqref{eq:interiortosubdifferential}}. Hence, 
	it 
	easily follows that $\{S_j^k\}_{k\in K}$ is bounded.
	
	\medskip 
	\noindent \textbf{Case 2.} Now, suppose $p\in (0,1)$. 
	From equation \eqref{KKT4-2}, we have 
	\[ 
	\begin{array}{rl}
	\ds \frac{\lvert(\varepsilon _4^{k-1})_j - F_j (\omega ^k , 
	\bar{\lambda}^k) \rvert }{p\lambda_1^k\psi'(\phi (\mu _{k-1}, \omega 
	_j^k))} = \lvert\phi ' (\mu _{k-1}, \omega _j ^k)\rvert \cdot  [\phi (\mu 
	_{k-1}, \omega _j^k)]^{p-1}.
	\end{array}	
	\]
	Using Assumptions~(B1) and (B3) and by \eqref{phi_firstder}, we get
	\[ 
	\ds  [\phi (\mu _{k-1}, \omega _j^k)]^{1-p} \cdot \frac{\lvert(\varepsilon 
	_4^{k-1})_j - F_j (\omega ^k , \bar{\lambda}^k) \rvert 
	}{p\lambda_1^k\psi'(\phi (\mu _{k-1}, \omega _j^k))} = \lvert\phi ' (\mu 
	_{k-1}, \omega _j ^k)\rvert \geq 1-\frac{c}{(S_j^k)^r + c} \geq 0.
	\]
	Meanwhile, note that since $1-p>0$ and $\omega _j^k \to 0$ as $k\in K\to 
	\infty$, we have 
	\[
	\lim _{k \in K\to\infty} [\phi (\mu _{k-1}, \omega _j^k)]^{1-p} = 0.
	\]
	Since $\lambda_1^{\ast}>0$ 
	%by Assumption~(B1) 
	and $\psi'(0) > 0$ by Assumption~(A), then
	\[\lim _{k\in K\to \infty} \frac{\lvert(\varepsilon _4^{k-1})_j - F_j 
	(\omega ^k , \bar{\lambda}^k) \rvert }{p\lambda_1^k\psi'(\phi (\mu _{k-1}, 
	\omega _j^k))}  = \frac{\lvert F_j(\omega ^*, 
	\bar{\lambda}^*)\rvert}{p\lambda _1^*\psi'(0)}.\]
	Thus,
	\[
	\lim_{k\in K\to \infty}\frac{c}{(S_j^k)^r + c} = 1.
	\]
	It follows that $\lim_{k\in K\to \infty}S_j^k = 0$, as desired.
	This completes the proof. 	
\end{proof}

We now focus on the sequence $\left\lbrace 
\nabla^2\varphi_{\mu_{k-1}}(\omega^k) \right\rbrace _{k\in K}$. Using Lemma 
\ref{lemma:boundedratio}, we prove the following important result, which will 
later be used 
to establish the boundedness of the sequence $\{ \zeta ^k, \eta ^k\}$ generated 
by Algorithm~\ref{algorithm}.

%-----------------------------------------------------------------------------------
% Proposition 4.3 to Lemma 3.4
\begin{lemma}\label{lemma:infinite_limit}
	Suppose that Assumptions~(B1)-(B4) hold. Let 
	$(\omega^*,\lambda^*)$ be an arbitrary accumulation point of the sequence 
	$\{(\omega^k, \lambda^k)\}$ generated by Algorithm~\ref{algorithm} and let 
	$\{(\omega^k, \lambda^k)\}_{k\in K}$ be an arbitrary subsequence converging 
	to $(\omega^*,\lambda^*)$. Then
	\[
	\lim_{k\in K \to \infty}\lvert(\nabla^2 
	\varphi_{\mu_{k-1}}(\omega^k))_{jj}\rvert = \infty \; \mbox{for}\; j \in 
	I(\omega^*).
	\] 
\end{lemma}
\begin{proof}
	 We first consider the case when $p=1$. In this instance, we have from 
	equation \eqref{Hessian-matrix} \alertchieu{that 
	\begin{equation*}\label{H-S-p=1}
	(\nabla^2\varphi_{\mu_{k-1}}(\omega^k))_{jj} = \psi''\left(\phi(\mu_{k-1}, 
	\omega_j^k)\right)\phi'(\mu_{k-1}, \omega_j^k)^2+ 
	\psi'\left(\phi(\mu_{k-1}, 
	\omega_j^k)\right) \phi''(\mu_{k-1}, \omega_j^k).
	\end{equation*}
	It follows from Assumption~(A) and Proposition 
	\ref{properties-smoothabsolutevalue}(c) that
	\[
		\psi''\left(\phi(\mu_{k-1}, \omega_j^k)\right)\phi'(\mu_{k-1}, 
	\omega_j^k)^2 \geq -\beta
	\]
	and there exists $\gamma>0$ such that for sufficient large $k$,
	\[
		\psi'\left(\phi(\mu_{k-1}, \omega_j^k)\right) \geq \gamma >0.
	\]
	Hence, for sufficiently large $k$ we obtain
	\[
		(\nabla^2\varphi_{\mu_{k-1}}(\omega^k))_{jj}\geq -\beta + 
	\gamma\phi''(\mu_{k-1}, \omega_j^k) = -\beta 
	+\gamma\frac{2}{\mu_{k-1}}\rho\left(\frac{\omega^k_j}{\mu_{k-1}}\right),
	\]
	where $\phi''(\mu_{k-1}, \omega_j^k)$ is defined as \eqref{phi_secondder}.}
	
	 By Lemma~\ref{lemma:boundedratio}, there exists $M>0$ such that 
	$\frac{\lvert\omega_j^k\rvert}{\mu _{k-1}}\leq M$ for all $k\in K$. Since 
	$\rho$ is nonincreasing on $[0,\infty)$ by Assumption~(B2), we have
	\alertchieu{ \begin{align*}
	 \lim _{k\in K \to 
	\infty}(\nabla^2\varphi_{\mu_{k-1}}(\omega^k))_{jj} 
	&\geq 
	-\beta +\lim _{k\in K \to \infty} 
	\frac{2\gamma}{\mu_{k-1}}\rho\left(\frac{\omega^k_j}{\mu_{k-1}}\right)\\
	 &\geq -\beta + \lim _{k\in K \to \infty} \frac{2\gamma}{\mu_{k-1}}\rho 
	(M) = 
	\infty ,
	\end{align*} where} the rightmost equality holds since  $\rho (t)>0$ on 
	$\Re$ by Assumption~(B4). This proves the claim for $p=1$. 
	 \medskip
	 \noindent 
	 
	 We now consider the case when $0<p<1$. 
	We look at two disjoint subsets of $K$:
	\[
	U^j_1 \coloneqq \{k\in K ~\lvert~ \omega_j^k=0\}, \quad \text{and} \quad  
	U^j_2 \coloneqq 
	\{k\in K ~\lvert~ \omega_j^k\ne 0\},
	\]
	and the corresponding subsequences. For $k\in U^j_1$,  we get from 
	\eqref{phi_firstder} that $\phi'(\mu_{k-1}, 0) = 0$. From 
	\eqref{Hessian-matrix}, we have
	\begin{equation}\label{lemma:infinite_limit.2}
	(\nabla^2\varphi_{\mu_{k-1}}(\omega^k))_{jj}  = p 
	\psi'\left([\phi(\mu_{k-1}, \omega^k_j)]^{p}\right)[\phi(\mu_{k-1}, 
	\omega^k_j)]^{p-1} \phi''(\mu_{k-1}, \omega^k_j).
	\end{equation}
	Meanwhile, from \eqref{phi_secondder}, 
	\begin{equation}\label{lemma:infinite_limit.3}
	\lim_{k\in U^j_1\to \infty} \phi''(\mu_{k-1}, \omega^k_j) = \lim_{k\in 
	U^j_1 \to 
	\infty}\frac{2}{\mu_{k-1}}\rho\left(\frac{\omega^k_j}{\mu_{k-1}}\right) = 
	\lim_{k\in U^j_1 \to \infty}\frac{2}{\mu_{k-1}}\rho(0)=\infty,
	\end{equation}
	where we note that $\rho (0)>0$ by Assumption~(B1) and the definition of 
	density function. Moreover, it is clear that
	\begin{equation}\label{lemma:infinite_limit.4}
	\lim_{k\in U^j_1\to \infty}[\phi(\mu_{k-1}, \omega^k_j)]^{p-1} = \infty.
	\end{equation}
	It follows from \eqref{lemma:infinite_limit.2}, 
	\eqref{lemma:infinite_limit.3}, \eqref{lemma:infinite_limit.4} and 
	Assumption~(A) that 
	\[\lim_{k\in U^j_1\to \infty} \lvert 
	(\nabla^2\varphi_{\mu_{k-1}}(\omega^k))_{jj}\rvert = \infty.\]
	
	\noindent For $k\in U^j_2$, we obtain
	\begin{equation}\label{lemma:infinite_limit.5}
	\lim_{k\in U^j_2\to \infty} \phi''(\mu_{k-1}, \omega^k_j) = 
	\lim_{k\alert{\in U^j_2}\to \infty}\frac{2}{\mu_{k-1}}\rho 
	\left(\frac{\omega^k_j}{\mu_{k-1}}\right)= \infty
	\end{equation}
	by using equation \eqref{phi_secondder} and the facts that $\lim_{k\in 
	U^j_2 \to \infty}\frac{\lvert\omega^k_j\rvert}{\mu_{k-1}} = 0$ from Lemma 
	\ref{lemma:boundedratio} and $\rho$ is continuous by Assumption~(B2).

	Meanwhile, \tblue{we obtain} $\phi''(\mu_{k-1},\omega_j^k) \neq 0$ for sufficiently large $k$ 
	using Lemma~\ref{lemma:boundedratio}, equation \eqref{phi_secondder} and 
	Assumption 
	(B1). Thus, invoking Assumption~(A), we have for large $k\in U_2^j$ that  
	\begin{align}
	&\psi''\left([\phi(\mu_{k-1}, \omega^k_j)]^p\right)\frac{[\phi'(\mu_{k-1}, 
	\omega^k_j)]^2[\phi(\mu_{k-1}, \omega^k_j)]^{p-1}}{\phi''(\mu_{k-1}, 
	\omega^k_j)} \notag{}\\
	&=  \psi''\left([\phi(\mu_{k-1}, 
	\omega^k_j)]^p\right)\frac{[\phi'(\mu_{k-1}, \omega^k_j)]^2[\phi(\mu_{k-1}, 
	\omega^k_j)]^{p}}{\phi''(\mu_{k-1}, \omega^k_j)\phi(\mu_{k-1}, \omega^k_j)} 
	\notag{} \\ 
	&\geq -\beta\frac{[\phi'(\mu_{k-1}, \omega^k_j)]^2[\phi(\mu_{k-1}, 
	\omega^k_j)]^{p}}{\phi''(\mu_{k-1}, \omega^k_j)\phi(\mu_{k-1}, 
	\omega^k_j)}. \label{psi''>=-beta}
	\end{align}
	Using \eqref{phi_secondder} \tblue{again, the symmetry of $\rho$,} and Proposition 
	\ref{properties-smoothabsolutevalue}(b), we have
	\[
	\phi''(\mu_{k-1}, \omega^k_j)\phi(\mu_{k-1}, \omega^k_j) \geq 
	\frac{2}{\mu_{k-1}}\rho \left(\frac{\omega^k_j}{\mu_{k-1}}\right) 
	\lvert\omega^k_j\rvert  =  2S_j^k\rho(S_j^k).
	\]
	Using this fact together with \eqref{psi''>=-beta} and equation 
	\eqref{phi_firstder}, we obtain
	\begin{align}
	&\psi''\left([\phi(\mu_{k-1}, \omega^k_j)]^p\right)\frac{[\phi'(\mu_{k-1}, 
	\omega^k_j)]^2[\phi(\mu_{k-1}, \omega^k_j)]^{p-1}}{\phi''(\mu_{k-1}, 
	\omega^k_j)} \notag{}\\
	&\geq -\beta\frac{2\left[\int_{0}^{S_j^k}\rho(t)dt\right]^2[\phi(\mu_{k-1}, 
	\omega^k_j)]^{p}}{S_j^k\rho(S_j^k)} \notag{} \\
	&\geq -\beta \frac{2S_j^k(\rho(0))^2[\phi(\mu_{k-1}, 
	\omega^k_j)]^{p}}{\rho(S_j^k)} ~\to ~0 ~\mbox{as} ~ k\in U^j_2 \to \infty, 
	\label{lemma:infinite_limit.6}
	\end{align}
	where the last inequality holds since 
	$\left[\int_{0}^{S_j^k}\rho(t)dt\right]^2 \leq (S_j^k)^2(\rho(0))^2$. 
	Similarly, we also have
	\begin{align}
	(p-1) \frac{[\phi'(\mu_{k-1}, \omega^k_j)]^2}{\phi''(\mu_{k-1}, \omega^k_j) 
	\phi(\mu_{k-1}, \omega^k_j)} & \geq   (p-1) 
	\frac{2\left[\int_{0}^{S_j^k}\rho(t)dt\right]^2}{S_j^k\rho(S_j^k)} 
	\notag{}\\
	& \geq (p-1) \frac{2S_j^k(\rho(0))^2}{\rho(S_j^k)} ~\to ~0 ~\mbox{as} ~ 
	k\in U^j_2 \to \infty. \label{lemma:infinite_limit.7}
	\end{align}
	
	\noindent For brevity, denote $\phi_k \coloneqq \phi(\mu_{k-1}, 
	\omega^k_j)$, 
	$\phi'_k \coloneqq \phi'(\mu_{k-1}, \omega^k_j)$, $\phi''_k \coloneqq 
	\phi''(\mu_{k-1}, 
	\omega^k_j)$. Then \eqref{Hessian-matrix} can be written as
	\begin{equation}\label{hessian_U2}
	(\nabla^2\varphi_{\mu_{k-1}}(\omega^k))_{jj}= p\phi''_k 
	\phi_k^{p-1}\left[p\psi''\left(\phi_k^p\right)\frac{[\phi'_k]^2\phi_k^{p-1}}{\phi''_k}
	 + (p-1)\psi'\left(\phi_k^p\right) \frac{[\phi'_k]^2}{\phi''_k \phi_k} + 
	\psi'\left(\phi_k^p\right) \right].
	\end{equation}
	Since $0 <\psi'(t) \leq \alpha$ by Assumption~(A), we have
	\[
	\lim_{k\in U^j_2 \to 
	\infty}\left[p\psi''\left(\phi_k^p\right)\frac{[\phi'_k]^2\phi_k^{p-1}}{\phi''_k}
	 + (p-1)\psi'\left(\phi_k^p\right) \frac{[\phi'_k]^2}{\phi''_k \phi_k} + 
	\psi'\left(\phi_k^p\right)\right] >0
	\]
	by using the obtained limits \eqref{lemma:infinite_limit.6} and 
	\eqref{lemma:infinite_limit.7}. On the 
	other hand, it is clear from \eqref{lemma:infinite_limit.5} that $\phi''_k 
	\phi_k^{p-1} 
	\to \infty$ as $k\in U^j_2 \to \infty$. 
	Hence, taking the limit in \eqref{hessian_U2}, 
	\[
	\lim_{k\in U^j_2 \to \infty}(\nabla^2\varphi_{\mu_{k-1}}(\omega^k))_{jj} = 
	\infty.
	\]
	This completes the proof. 
\end{proof} 

% We next prove the following lemma which will be used to prove the boundedness 
%of the iterates $\{(\zeta ^k, \eta ^k) \}$. In this result, we do not actually 
%need $\{\omega ^k, \mu _{k-1} \}$ to be generated from the algorithm. 

\begin{lemma}\label{lemma:lagrange_multipliers}
	Suppose that Assumptions~(B1)-(B4) hold, and $(\omega^*, 
	\lambda^*, \zeta^*, \eta^*)$ is an accumulation point of the sequence 
	$\{(\omega^k, \lambda^k, 
	\zeta^k, 
	\eta^k)\}$ generated by Algorithm~\ref{algorithm}. Then
	\begin{itemize}
		\item [(i)] $\zeta_j^* = 0$ for all $j\in I(\omega^*)$, that is, 
		$\hatzeta^* = 0$; and
		
		\item [(ii)] $\sum_{j\notin I(\omega^*)}{\rm 
			sgn}(\omega^*_j)\lvert\omega^*_j\rvert^{p-1}\psi'(\lvert\omega^*_j\rvert^p)\zeta^*_j=
		\eta_1^*$,
	\end{itemize}
\end{lemma}
\alert{
\begin{proof}
	Let $\{(\omega^k, \lambda^k, \zeta^k, \eta^k)\}_{k\in K}$ be a subsequence 
	converging to $(\omega^*, \lambda^*, \zeta^*, \eta^*)$. From 
	\eqref{KKT1_k}, 
	we have for all $k\in K$ that 
	\[ (\nabla f(\omega^k))_j + 
	\left(\nabla^2_{\omega\omega}G(\omega^k,\bar{\lambda}^k)\zeta^k\right)_j
	+ \lambda_1^k(\nabla^2\varphi_{\mu_{k-1}}(\omega^k))_{jj}{\zeta}^k_j = 
	(\varepsilon_1^{k-1})_j ,\ (j=1,2,\dots,n).\]
	Since $G$ is twice continuously differentiable and $f$ is continuously 
	differentiable, then 
	$\{\lambda_1^k(\nabla^2\varphi_{\mu_{k-1}}(\omega^k))_{jj}\zeta^k_j\}_{k\in 
		K}$ is a bounded sequence for each $j$. Consequently, $\zeta_j^* = 0$ 
		for 
	all $j\in I(\omega^*)$, since $\lambda_1^*>0$ and $\lim_{k\in K \to \infty} 
	(\nabla^2\varphi_{\mu_{k-1}}(\omega^k))_{jj} = +\infty$ for each $j\in 
	I(\omega^*)$ by Lemma~\ref{lemma:infinite_limit}. This proves part (i). 
	
	To prove part (ii), we note from \eqref{KKT4_k} that for all $k\in K$,
	\[F_j(\omega^k,\bar{\lambda}^k) + 
	\lambda_1^k (\nabla \varphi_{\mu_{k -1}}(\omega^k) )_j = 
	(\varepsilon_4^{k-1})_j ,\ (j=1,2,\dots,n),\]
	so that $\{ (\nabla \varphi_{\mu_{k -1}}(\omega^k) )_j\}_{k\in K}$ is 
	convergent since 
	$F_j$ given by \eqref{Partial-G} is continuous and $\lambda_1^* > 0$. 
	Hence, we obtain 
	from item (i) that
	$\lim_{\alertchieu{k\in K\to\infty}}(\nabla\varphi_{\mu_{k-1}}(\omega^k))_j \zeta^k_j = 0$, 
	$(j\in I(\omega^*))$ and therefore,
	\begin{equation*}
	\lim_{k\in K\to\infty}\sum_{j\in 
		I(\omega^*)}(\nabla\varphi_{\mu_{k-1}}(\omega^k))_j \zeta^k_j = 0.
	\end{equation*}
	Together with \eqref{prop41.1} and \eqref{KKT2_k}, it follows that
	\begin{align*}
	\eta_1^* = \lim_{k\in K\to \infty} \eta_1^k & = \lim_{k\in K\to 
		\infty}\nabla\varphi_{\mu_{k-1}}(\omega^k)^T \zeta^k \\
	&= \lim_{k\in K\to \infty}\sum_{j\notin 
		I(\omega^*)}(\nabla\varphi_{\mu_{k-1}}(\omega^k))_j \zeta^k_j\\
	&= \sum_{j\notin I(\omega^*)}p{\rm 
		sgn}(\omega^*_j)\lvert\omega^*_j\rvert^{p-1}\psi'(\lvert\omega^*_j\rvert^p)
	\zeta^*_j,
	\end{align*}
	proving part (ii). 
\end{proof}

\begin{remark}\label{rem:liminf}
	Crucial in the proofs of Lemmas 
	\ref{lemma:boundedratio}-\ref{lemma:lagrange_multipliers} is the strict 
	positivity of the regularization parameter $\lambda_1^*$ that corresponds 
	to \tblue{an accumulation} point of $\{\lambda_1^k\}$. This is automatically guaranteed by 
	the constraint set \eqref{eq:Omega_eps}, as opposed to the work of 
	\cite{OTKW21} where the parameter $\epsilon$ is set to $0$. In turn, 
	\cite{OTKW21} requires the assumption that $\liminf_{k\to\infty} 
	\lambda_1^k > 0$ to ensure that $\lambda_1^*>0$, but such an assumption is 
	difficult to guarantee for the iterates generated by Algorithm 
	\ref{algorithm}. 
\end{remark}

Having derived all the necessary lemmas, we can now prove our main result. 

\begin{proof}[Proof of Theorem~{\upshape\ref{thm:convergence-analysis}}]
	Let $\{(\omega^k, \lambda^k, \zeta^k, \eta^k)\}_{k\in K}$ be a subsequence 
	converging to an accumulation point $(\omega^*, \lambda^*, \zeta^*, 
	\eta^*)$. 
	It is clear from \eqref{KKT3_k} and \eqref{KKT5_k} that equations 
	\eqref{bkkt5} and \eqref{bkktlast} hold. Meanwhile, we obtain from 
	\eqref{KKT1_k} and \eqref{KKT4_k}, respectively, that
	\begin{eqnarray}
	& \nabla_{\tomega} f(\omega^k) + (\nabla^2_{\tomega\tomega}G(\omega^k, 
	\bar{\lambda}^k)\tzeta^k + (\nabla^2_{\tomega\hatomega}G(\omega^k, 
	\bar{\lambda}^k)\hatzeta^k+
	\lambda_1^k\nabla_{\tomega}^2\varphi_{\mu_{k-1}}(\omega^k))\tzeta^k = 
	\tvarepsilon_1^{k-1}, \label{KKT1_k2}\\
	& \nabla_{\tomega}G(\omega^k, \bar{\lambda}^k) + 
	\lambda_1^k\nabla_{\tomega}\varphi_{\mu_{k-1}}(\omega^k) = 
	\tvarepsilon_4^{k-1}, 
	\label{KKT4_k2}
	\end{eqnarray}
	where $\tvarepsilon_1^{k-1} = \{(\varepsilon_1^{k-1})_j\}_{j\notin 
		I(\omega^*)}$ and $\tvarepsilon_4^{k-1} = 
	\{(\varepsilon_4^{k-1})_j\}_{j\notin 
		I(\omega^*)}$. Using Lemma~\ref{lemma:limit_gradient_hessian} and Lemma 
		\ref{lemma:lagrange_multipliers}(i), and 
		letting 
	$k\in K\to \infty$ in \eqref{KKT1_k2} and \eqref{KKT4_k2}, we obtain the 
	bilevel KKT conditions \eqref{bkkt1} and \eqref{bkkt2}. Finally, 
	\eqref{bkkt3} and \eqref{bkkt4} hold by Lemma 
	\ref{lemma:lagrange_multipliers}. This completes 
	the proof of 
	Theorem~\ref{thm:convergence-analysis}.

\end{proof}
}

\subsection{\alert{Boundedness}}
\alert{In the preceding discussion, we have shown that accumulation points of 
$\{(\omega^k,\lambda^k,\zeta^k,\eta^k)\}$ correspond to bilevel KKT points. The 
existence of these accumulation points is guaranteed by boundedness of the full 
sequence $\{(\omega^k,\lambda^k,\zeta^k,\eta^k)\}$. In this section, we show 
that the boundedness of  $\{(\omega^k,\lambda^k)\}$ is enough to conclude that 
the full sequence is bounded. 
  \subsubsection{\alert{Weaker constraint qualification}}
  In \cite{OTKW21}, linearly independent 
  constraint qualification (LICQ) was one of the assumptions used to obtain the 
  boundedness of the sequence $\{(\omega^k,\lambda^k,\zeta^k,\eta^k)\}$. In 
  this present work, we only assume that the 
Mangasarian-Fromovitz constraint qualification holds at accumulation points 
of a sequence generated by the smoothing algorithm. }
\medskip 

\noindent \textbf{\alert{Assumption~(C).}} Let $(\omega^*,\lambda^*)\in \Re^n 
\times 
\Re^r$ be an accumulation point of $\{(\omega^k,\lambda^k)\}$ generated by 
Algorithm~\ref{algorithm}. Denote
\[
I(\lambda^*) \coloneqq \{i \in \{1, 2, \dots, r\}\ \lvert \ \lambda^*_i = 
\tblue{\epsilon}\alert{(e_1)_i},\}
\] 
\alert{where $e_1 = (1,0,\dots,0)\in \Re^r$}, and
\[
\Phi_j(\omega, \lambda)\coloneqq \frac{\partial G(\omega, 
\bar{\lambda})}{\partial 
\omega_j} + p {\rm 
sgn}(\omega_j)\lambda_1\lvert\omega_j\rvert^{p-1}\psi'(\lvert\omega_j\rvert^p) 
\;  (j \notin I(\omega^*)).
\] Then, the Mangasarian-Fromovitz constraint qualification (MFCQ) holds at 
$(\omega, \lambda) = (\omega^*,\lambda^*)$ for the constraints
$\Psi_j(\omega,\lambda) = 0$ for all $j=1,\dots, n$ and $\lambda\geq 0$, where 
\[
\Psi_j(\omega, \lambda) \coloneqq \left\{
\begin{array}{lc}
\Phi_j(\omega, \lambda)& {\rm if}\, j\notin I(\omega^*),\\ 
\omega_j& {\rm if}\, j\in I(\omega^*).\\
\end{array}
\right.
\]
That is, $\{ \nabla _{(\omega,\lambda)}\Psi _j (\omega^*,\lambda^*) \}_{j=1}^n$ 
is linearly independent and there exists $\bar{d}\in \Re^{n+r}$ such that
\begin{eqnarray}
\nabla _{(\omega,\lambda)}\Psi _j (\omega^*,\lambda^*) ^T \bar{d} = 0  \quad 
\forall j=1,\dots, n, \label{MFCQ1} \\ 
(\nabla_{(\omega,\lambda)}\lambda_i 
\lvert_{(\omega,\lambda)=(\omega^*,\lambda^*)})^T\bar{d}>0 \qquad \forall i\in 
I(\lambda^*). \label{MFCQ2}
\end{eqnarray}
The following 
lemma is needed for subsequent analysis.
%-------------------------------------------------------------------------------------------
% Lemma 3.1
\begin{lemma}\label{propertyMFCQ}
	Suppose that $(\omega^*,\lambda^*)$ is an arbitrary accumulation point of 
	the 
	sequence $\{(\omega^k, \lambda^k)\}$ such that Assumption \alert{(C)} 
	holds. 
	Then,  $\{\nabla_{(\tomega,\lambda)}\Phi_j(\omega^*, 
	\lambda^*)\}_{j\notin I(\omega^*)}$ is linearly independent and there 
	exists a 
	vector $d\in \Re^{n-\lvert I(\omega^*)\rvert+r}$ such that
	\begin{eqnarray}
	\nabla_{(\tomega,\lambda)}\Phi_j(\omega^*, \lambda^*)^Td=0 \quad  
	\forall j\notin I(\omega^*), \label{MFCQ1_1} \\ 
	(\nabla_{(\tomega,\lambda)}\lambda_i 
	\lvert_{(\omega,\lambda)=(\omega^*,\lambda^*)} )^T d>0 \quad \forall  i\in 
	I(\lambda^*), \label{MFCQ2_2}
	\end{eqnarray}
	where $\tomega \coloneqq (\omega_j)_{j\alert{\notin }I(\omega^*)}$. 
\end{lemma}
\begin{proof}
	See Appendix\,\ref{sec:appB}.
\end{proof}

\subsubsection{Boundedness of algorithm iterates}

We will now show the boundedness of the sequence of Lagrange multiplier vectors 
$\{(\zeta^k, \eta^k)\}$ in the following lemma. 
%The proof follows some of the
%ideas used in \cite[Proposition 9]{OTKW21}. However, there are important 
%arguments that differ from those in \cite[Proposition 9]{OTKW21}, which we 
%highlight in the following proof. 
%-----------------------------------------------------------------------------------------------
% Proposition 4.4 to Lemma 3.6
\begin{proposition}\label{prop44}
	Suppose that Assumptions~(B) and (C) hold. Let $\{(\zeta^k, 
	\eta^k)\} \subseteq \Re^n \times \Re^r$ be a sequence of the accompanying 
	Lagrange multiplier vectors generated by Algorithm~\ref{algorithm}. 
	\alert{If $\{\omega^k,\lambda^k)\}$ is bounded,} then 
	$\{(\zeta^k, \eta^k)\}$ is bounded. 
\end{proposition}

\begin{proof}
	For convenience, denote
	\[
	\xi^k \coloneqq ((\zeta^k)^T, (\eta^k)^T)^T, \quad \hat{\zeta}^k 
	\coloneqq\frac{\zeta^k}{\|\xi^k\|},\quad  \hat{\eta}^k \coloneqq 
	\frac{\eta^k}{\|\xi^k\|}
	\]
	for each $k$. We prove by contradiction that the sequence 
	$\{(\alert{\zeta}^k, \eta^k)\}$ is bounded. Without loss of generality, we 
	may assume that
	\begin{equation*}\label{prop44.1}
	\|\xi^k\| \to \infty, \ \lim_{k\to \infty}\frac{\xi^k}{\|\xi^k\|}= 
	\hat{\xi}^*,
	\end{equation*}
	where $\hat{\xi}^*\coloneqq((\alert{\hat{\zeta}}^*)^T, 
	(\alert{\hat{\eta}}^*)^T)^T$ with $\alert{\hat{\zeta}}^*$ and 
	$\alert{\hat{\eta}}^*$ are accumulation points of $\{\hat{\zeta}^k\}$ and 
	$\{\hat{\eta}^k\}$, respectively. 
%	By Assumption~(B2), 
	\alert{We} may suppose 
	without loss of generality that $\lim_{k\to \infty}(\omega^k, \lambda^k) = 
	(\omega^*, \lambda^*)$. Dividing by $\|\xi^k\|$ both sides of 
	\eqref{KKT1},\eqref{KKT2},\eqref{KKT3} and \eqref{KKT5} evaluated at 
	$(\omega,\lambda,\zeta,\eta) = (\omega^k,\lambda^k,\zeta^k,\eta^k)$ and 
	$(\varepsilon_1,\varepsilon_2,\varepsilon_3,\varepsilon_4,\varepsilon_5) = 
	(\varepsilon^{k-1}_1,\varepsilon^{k-1}_2,\varepsilon^{k-1}_3,\varepsilon^{k-1}_4,\varepsilon^{k-1}_5)$, we have for each $k$ the following equations:
%	\begin{equation}\label{prop44.2}
%	\frac{(\nabla f(\omega^k))_j}{\|\xi^k\|} + 
%	\left(\nabla^2_{\omega\omega}G(\omega^k,\bar{\lambda}^k)\hat{\zeta}^k\right)_j
%	 + \lambda_1^k(\nabla^2\varphi_{\mu_{k-1}}(\omega^k))_{jj}\hat{\zeta}^k_j = 
%	\frac{(\varepsilon_1^{k-1})_j}{\|\xi^k\|} \ (j=1,2,\dots,n),
%	\end{equation}
%	\begin{equation}\label{prop44.3}
%	\nabla\varphi_{\mu_{k-1}}(\omega^k)^T \hat{\zeta}^k - \hat{\eta}^k_1= 
%	\frac{\varepsilon_2^{k-1}}{\|\xi^k\|},
%	\end{equation} 
%	\begin{equation}\label{prop44.4}
%	\nabla R_{j}(\omega^k)^T \hat{\zeta}^k - \hat{\eta}^k_j= 
%	\frac{(\varepsilon_3^{k-1})_{j-1}}{\|\xi^k\|} \ (j=2,3,\dots,r),
%	\end{equation}  
%	\begin{equation}\label{prop44.5}
%	\alert{\lambda_j^k -\tblue{\epsilon}(e_1)_j
%		\geq 0}, \quad  \hat{\eta}^k_j \geq 0 , \quad 
%		(\lambda_j^k-\tblue{\epsilon}(e_1)_j) 
%		\hat{\eta}^k_j 
%		\leq \sum_{\alert{l}=2}^r \alert{(\lambda_l^k-\tblue{\epsilon}(e_1)_l) }
%		\hat{\eta}^k_l = \frac{\varepsilon_5^{k-1}}{\|\xi^k\|}\quad 
%		(j=1,\dots,r).
%		
%		
%	\end{equation} 
	\begin{align}
&	\frac{(\nabla f(\omega^k))_j}{\|\xi^k\|} + 
	\left(\nabla^2_{\omega\omega}G(\omega^k,\bar{\lambda}^k)\hat{\zeta}^k\right)_j
	 + \lambda_1^k(\nabla^2\varphi_{\mu_{k-1}}(\omega^k))_{jj}\hat{\zeta}^k_j = 
	\frac{(\varepsilon_1^{k-1})_j}{\|\xi^k\|} \ (j=1,2,\dots,n),\label{prop44.2}\\
&	\nabla\varphi_{\mu_{k-1}}(\omega^k)^T \hat{\zeta}^k - \hat{\eta}^k_1= 
	\frac{\varepsilon_2^{k-1}}{\|\xi^k\|},\label{prop44.3}\\
&	\nabla R_{j}(\omega^k)^T \hat{\zeta}^k - \hat{\eta}^k_j= 
	\frac{(\varepsilon_3^{k-1})_{j-1}}{\|\xi^k\|} \ (j=2,3,\dots,r),\label{prop44.4}\\
&	\alert{\lambda_j^k -\tblue{\epsilon}(e_1)_j
		\geq 0}, \quad  \hat{\eta}^k_j \geq 0 , \quad 
		(\lambda_j^k-\tblue{\epsilon}(e_1)_j) 
		\hat{\eta}^k_j 
		\leq \sum_{\alertchieu{j=1}}^r \alert{(\lambda_j^k-\tblue{\epsilon}(e_1)_j) }
		\hat{\eta}^k_j = \frac{\varepsilon_5^{k-1}}{\|\xi^k\|}\quad 
		(j=1,\dots,r). \label{prop44.5}	%\\
%&\tblue{[[[ \mbox{In the last line, I think}
%	\sum_{\alert{l}=1}^r \alert{(\lambda_l^k-\tblue{\epsilon}(e_1)_l) 
%}\hat{\eta}^k_l
%	\mbox{ is correct} 
%		]]]}\notag
	\end{align} 
	
%	where $(\omega,\lambda,\zeta,\eta) = (\omega^k,\lambda^k,\zeta^k,\eta^k)$ 
%	and 
%	$(\varepsilon_1,\varepsilon_2,\varepsilon_3,\varepsilon_4,\varepsilon_5) = 
%	
%(\varepsilon^{k-1}_1,\varepsilon^{k-1}_2,\varepsilon^{k-1}_3,\varepsilon^{k-1}_4,\varepsilon^{k-1}_5)$.
	 Since $\lim_{k\to\infty}\frac{\varepsilon_l^{k-1}}{\|\xi^k\|} = 0$, 
	$l=1,2,3,5$, letting 
	$k\to \infty$ in inequality \eqref{prop44.5} 
	gives
%	\begin{equation}\label{prop44.6}
%	\hat{\eta}^*_1 =0, 
%	\end{equation}
%	and similarly, 
	\begin{equation}\label{prop44.7}
	\hat{\eta}^*_j =0 \ (j\notin I(\lambda^*)) ~~\mbox{and}~~ \hat{\eta}^*_j 
	\geq 0 \ (j\in I(\lambda^*)).
	\end{equation} 
	Since $\|\hat{\xi}^*\| = 1$, we get from \eqref{prop44.7} that
	\begin{equation}\label{prop44.8}
	1=\|\hat{\zeta}^*\|^2 + \|\hat{\eta}^*\|^2 = \|\hat{\zeta}^*\|^2 + 
	\sum_{j\in I(\lambda^*)}\lvert\hat{\eta}^*_j\rvert^2.
	\end{equation}
	Meanwhile, since $G$ is twice continuously differentiable and $f$ is 
	continuously differentiable, then both 
	$\{\nabla^2_{\omega\omega}G(\omega^k,\bar{\lambda}^k)\hat{\zeta}^k\}$ and 
	$\left\lbrace \frac{\nabla f(\omega^k)}{\|\xi^k\|}  \right\rbrace$ are 
	bounded. This implies the boundedness of 
	$\{\lambda_1^k(\nabla^2\varphi_{\mu_{k-1}}(\omega^k))_{jj}\hat{\zeta}^k_j\}$
	 for each $j$ by \eqref{prop44.2}. Consequently, we obtain $\lim_{k\to 
	\infty}\hat{\zeta}^k_j = 0$ for $j\in I(\omega^*)$ by 
%using Assumption~(B1) and 
\alert{Lemma~\ref{lemma:infinite_limit} and noting that $\lambda_1^* >0$}. That 
is,
	\begin{equation}\label{prop44.9}
	\hat{\zeta}^*_j = 0 \ (j\in I(\omega^*)).
	\end{equation}
%	By invoking Assumption~(B1) and 
	\alert{L}etting $k\to \infty$ in \eqref{KKT4}, it 
	is clear that 
	\begin{equation}\label{lim_nabla_varphi_mu}
	\lim_{k\to\infty}\lvert\nabla (\varphi_{\mu_{k -1}} (\omega ^k))_j\rvert = 
	\frac{\lvert F_j(\omega ^*, \bar{\lambda}^*)\rvert}{\lambda_1^*},
	\end{equation} 
	where $F_j(\omega ^*, \bar{\lambda}^*)$ is given by $\eqref{Partial-G}$. 
	This together with \eqref{prop44.9} gives us  
	\begin{equation}\label{prop44.10}
	\lim_{k\to\infty}\sum_{j\in 
	I(\omega^*)}(\nabla\varphi_{\mu_{k-1}}(\omega^k))_j \hat{\zeta}^k_j = 0.
	\end{equation}
%	On the other hand, we get 
%	\[\lim_{k\to \infty}\nabla\varphi_{\mu_{k-1}}(\omega^k)^T \hat{\zeta}^k = 
%	\alert{\hat{\eta}_1^*}\]
%	by letting $k\to \infty$ in \eqref{prop44.3}. 
%	Together with \eqref{prop44.10}, it yields
\alert{Thus, 
	\begin{align}
	\alert{\hat{\eta}_1^*}& \overset{\eqref{prop44.3}}{=}\lim_{k\to 
	\infty}\nabla\varphi_{\mu_{k-1}}(\omega^k)^T \hat{\zeta}^k \notag \\
&= 
	\lim_{k\to \infty}\left(\sum_{j\in 
	I(\omega^*)}(\nabla\varphi_{\mu_{k-1}}(\omega^k))_j \hat{\zeta}^k_j + 
	\sum_{j\notin I(\omega^*)}(\nabla\varphi_{\mu_{k-1}}(\omega^k))_j 
	\hat{\zeta}^k_j\right)   \notag \\
	&\overset{\eqref{prop44.10}}{=}\lim_{k\to \infty}\sum_{j\notin 
	I(\omega^*)}(\nabla\varphi_{\mu_{k-1}}(\omega^k))_j \hat{\zeta}^k_j  \notag 
	\\
	& \overset{\eqref{prop41.1}}{=} \sum_{j\notin I(\omega^*)}p{\rm 
	sgn}(\omega^*_j)\lvert\omega^*_j\rvert^{p-1}\psi'(\lvert\omega^*_j\rvert^p) 
	\hat{\zeta}^*_j. \label{eq:eta1*}
	\end{align}
On the other hand, we have from \eqref{prop44.4} and \eqref{prop44.9} that 
	\begin{equation}
		\hat{\eta}_j^* = \sum_{i\notin I(\w^*)} \frac{\partial 
		R_j(\w^*)}{\partial \w_i} 	\hat{\zeta}^*_i, \quad (j=2,\dots, r). 
		\label{eq:etaj*}
	\end{equation}
Meanwhile, letting $k\to\infty$ in \eqref{prop44.2} and using equations 
\eqref{prop44.9} and \eqref{prop41.2}, 
we have 
	\begin{equation}
		\left(\nabla^2_{\tomega\tomega}G(\omega^*, 
		\bar{\lambda}^*)+
		\lambda_1^*	
		p^2\psi''\left(\lvert\tomega^*_j\rvert^{p}\right)\lvert\tomega^*_j\rvert^{2p-2}
		+ 
		\lambda_1^*p(p-1)\psi'\left(\lvert\tomega^*_j\rvert^{p}\right)\lvert\tomega^*_j\rvert^{p-2}
		 \right) \tilde{\hat{\zeta}}^*  = 0, \label{eq:nablaphi_w}
	\end{equation}
where $ \tilde{\hat{\zeta}}^* = (\hat{\zeta}^*_i)_{i\notin I(\w^*)}$. Combining 
equations \eqref{eq:eta1*}, \eqref{eq:etaj*} and \eqref{eq:nablaphi_w}, we 
obtain}
	\begin{equation}\label{prop44.11}
	\sum_{j\notin 
	I(\omega^*)}\hat{\zeta}^*_j\nabla_{(\tomega,\lambda)}\Phi_{j}(\omega^*,\lambda^*)
	 - \sum_{j\in 
	I(\lambda^*)}\hat{\eta}^*_j\nabla_{(\tomega,\lambda)}\lambda_j(\omega^*,\lambda^*)
	 = 0,
	\end{equation}
	where $\tomega\coloneqq(\omega_j)_{j\notin I(\omega^*)}$ and 
	$\Phi_{j}(\omega,\lambda)$ $(j\notin I(\omega^*))$ are as defined in 
	\alert{Assumption~(C)}. On the other hand, by Lemma~\ref{propertyMFCQ}, we 
	can 
	find a vector $d\in \Re^{n-\lvert I(\omega^*)\rvert+r}$ such that 
	\eqref{MFCQ1_1} and \eqref{MFCQ2_2} hold. From \eqref{prop44.11}, we have 
	\begin{equation*}%\label{prop44.11}
	\sum_{j\notin 
	I(\omega^*)}\hat{\zeta}^*_j\nabla_{(\tomega,\lambda)}\Phi_{j}(\omega^*,\lambda^*)^T
	 d  - \sum_{j\in 
	I(\lambda^*)}\hat{\eta}^*_j\nabla_{(\tomega,\lambda)}\lambda_j(\omega^*,\lambda^*)^Td
	 = 0.
	\end{equation*}
	Together with equation \eqref{MFCQ1_1}, we obtain 
	\[
	\sum_{j\in 
	I(\lambda^*)}\hat{\eta}^*_j\nabla_{(\tomega,\lambda)}\lambda_j(\omega^*,\lambda^*)^Td=0.
	\]
	Consequently, we have from \eqref{prop44.7} and \eqref{MFCQ2_2} that 
	$\hat{\eta}_j^* = 0$ for all $j\in I(\lambda^*)$. In turn, 
	\eqref{prop44.11} implies that
	\[ \sum_{j\notin 
	I(\omega^*)}\hat{\zeta}^*_j\nabla_{(\tomega,\lambda)}\Phi_{j}(\omega^*,\lambda^*)
	 = 0.\]
	Since 
	$\{\nabla_{(\tomega,\lambda)}\Phi_{j}(\omega^*,\lambda^*)\}_{j\notin 
	I(\omega^*)}$ is linearly independent by Lemma~\ref{propertyMFCQ}, then 
	$\hat{\zeta}_j^* = 0$ for all $j\notin I(\omega^*)$. Together with 
	\eqref{prop44.9}, we have $\hat{\zeta}^*=0$ which in turn implies that 
	$\|\hat{\zeta}^*\|^2 + \sum_{j\in 
	I(\lambda^*)}\lvert\hat{\eta}^*_j\rvert^{\alert{2}} = 0$. This contradicts 
	\eqref{prop44.8}. Therefore, the sequence $\{(\zeta^k, \eta^k)\}$ is 
	bounded. 
\end{proof}

%By the above lemma, we know that accumulation points of $\{(\omega^k, 
%\lambda^k, \zeta^k, \eta^k)\}$ exist. We now show that such point satisfies 
%condition \eqref{SBKKT3}-\eqref{SBKKT4}. 
 \section{{Numerical results}} 
%
%\subsection{Simulations}\label{numerical}
 We compare the efficiency of different smoothing functions, namely the 
 functions $\phi_i\ (i=1,2,\ldots,6)$ presented in Appendix 
 \ref{app:examples_smoothing} by means of numerical simulation.
The program is coded in \alert{MATLAB~R2022b and run on a machine with Intel(R) 
Core(TM) i7-7500U CPU@2.70GHz and 8.0 GB RAM.}

\subsection{\alertsection{Problem with an Elastic-Net-type regularizer}}
We solve the following bilevel problem arising from squared linear regression using an Elastic-Net-type regularizer:
\begin{align}\label{al:0913}
\begin{array}{rll}
\displaystyle{\min_{\w,\lambda}}&\ &\displaystyle{\frac{1}{2}\|A_{1}{\w}-b_{1}\|_2^2} \\ 
\mbox{s.t. }&  &\w \in\displaystyle{\mathop{\rm argmin}_{\hat{\w}\in\Re^n}}
\left\{
\frac{1}{2}\|A_{2}\hat{\w}-b_{2}\|_2^2 + {\lambda_1\|\hat{\w}\|_p^p}
+\lambda_2\|\hat{\w}\|_2^2
\right\}\\ 
& &\lambda_1 \geq \epsilon , \quad \lambda_2 \geq 0, 
\end{array}
\end{align}
where
$A_{i}\in \Re^{m_{i}\times n}$, $b_{i}\in \Re^{m_{i}}$ for $i\in \{1,2\}$ and 
$\epsilon = 10^{-6}$.
We produce 20 synthetic problems for 
$(n,m_{1},m_{2})=(500,1000,1000)$ and for $(n,m_{1},m_{2})=(500,300,300)$ 
generated in Matlab as follows:
%The data matrices and vectors $A_{\{1,tr\}}, b_{\{1,t\}}$
%are generated by
%\texttt{rand}. We also generate
\begin{eqnarray*}
  &A_{i}\coloneqq\texttt{rand}(m_{i},n),\ 
  \begin{bmatrix}b_1\\b_{2}\end{bmatrix}\coloneqq\begin{bmatrix}A_1*\theta 
  \\A_2*\theta+0.01*\left(2*\texttt{rand}\left(m_{2},1\right)-\texttt{ones}\left(m_2,1\right)\right)\end{bmatrix},&\\
  &\theta\coloneqq\texttt{zeros}(n,1), \quad \theta 
  (\texttt{randsample}(n,0.15*n)) = -5+10*\texttt{rand}(0.15*n,1),&
\end{eqnarray*}
with \texttt{rand}, \texttt{randn}, \texttt{ones}, and \texttt{zeros} being 
MATLAB commands, 
and apply Algorithms~\ref{algorithm} with the smoothing functions $\phi_i\ 
(i=1,2,\ldots,6)$ to the problems \eqref{al:0913} with the generated data. The 
random number generator is initialized 
at \texttt{default}. The test data $A_3\in \Re^{m_3\times n}$ and $b_3 \in 
\Re^{m_3}$ are generated in the same manner as $A_i$ and $b_i$ for $i=1,2$, with 
$m_3 \coloneqq m_1$. 
%Moreover, the parameter $p$ in the $\ell_p$-regularizer is set to $p=1,0.5$ 
%for each problem.

%Morevoer, \texttt{rand} is run with 0.234 as a seed.
%[{\rm zeros}(\lfloor n \rfloor /3,1);]$  

In order to compute a KKT point of the smoothed subproblem for
\eqref{al:0913} in Step~1 of Algorithm\,\ref{algorithm}, we utilize the MATLAB 
solver \texttt{fmincon} with ``MaxIterations$=10^4$'' and opt for the 
interior-point method as an algorithm that runs within 
\texttt{fmincon}. %\footnote{One may think of using the SQP method in place of 
%the interior-point method. In the preliminary numerical experiments,  
%\texttt{fmincon} using the interior point method (fminconIPM) performed better 
%than the one with the SQP method (fminconSQP).Indeed, we observed that 
%fminconIPM solved each smoothed subproblem for \eqref{al:0913} more stably 
%than 
%fminconSQP.We also observed that fminconIPM was more likely to find a positive 
%solution $\lambda_1$, leading to a sparse solution. This result may be 
%peculiar 
%to an interior point method that reaches a solution from the interior of a 
%feasible region. %, leading to that }
%Solutions which are output by \texttt{fmincon} according to its default stopping criteria are employed as iterates.
 We initialize \texttt{fmincon} for \eqref{one-levelproblem} at some initial 
 point $(\w^0,\lambda^0)$ in the 
 first iteration $k=0$, and then use the previous iteration point 
 $(\w^{k-1},\lambda^{k-1})$ as the initial point for the succeeding iterations, 
 i.e., for $k\ge 1$. The 
 smoothing parameter is initialized at $\mu_0=0.1$, and the factor of decrease 
 is set to $\beta_1 = 0.8$. To obtain a reasonable initial point 
 $(\w^0,\lambda^0)$, we employ a semismooth Newton (SSN) method for solving the 
 KKT 
 system \eqref{KKT1}-\eqref{KKT5}. We first use a 
 complementarity function to  
 reformulate the conditions \eqref{KKT5} with $\varepsilon_5 = 0$ as a system 
 of equations 
 \cite{ALNCC20}. In particular, we use the Fischer-Burmeister function 
 $\phi_{\rm FB} : \Re^r \to \Re^r$ given by 
 	\[\phi_{\rm FB} (x,y) = x + y - \sqrt{x^2 + y^2},\]
 where the operations are understood to be taken component-wise, so that the 
 conditions \eqref{KKT5} with $\varepsilon_5 = 0$ are equivalent to 
 solving
 	\begin{equation}
 	\phi_{\rm FB} (\lambda-\epsilon e_1, \eta ) = 0.
 	\label{eq:fb}
 	\end{equation}
 With this, a KKT point satisfying \eqref{KKT1}-\eqref{KKT5} can be obtained 
 by solving approximately the equation  
 	\begin{equation}
 	\Phi^{\mu}_{\rm FB} (\w,\lambda,\zeta,\eta) \coloneqq  
 	\left( 
 	\begin{array}{c}
 	\nabla f(\omega) + (\nabla^2_{\omega\omega}G(\omega, \bar{\lambda}) + 
 	\lambda_1\nabla^2\varphi_{\mu}(\omega))\zeta \\
 	\nabla\varphi_{\mu}(\omega)^T \zeta - \eta_1 \\
 	\nabla \bar{R}(\w)^T \zeta - \bar{\eta} \\
 	\nabla_{\omega}G(\omega, \bar{\lambda}) + 
 	\lambda_1\nabla\varphi_{\mu}(\omega) \\
 	\phi_{\rm FB} (\lambda - \epsilon e_1,\eta) 
 	\end{array}\right) = 0.
 	\label{eq:Phi_FB}
 	\end{equation}
By our differentiability assumptions on $f$, $g$, $R_j$ ($j=2,\dots,r$) and the 
smoothness of $\varphi_{\mu }$, equations \eqref{KKT2}-\eqref{KKT4} are all 
smooth. On the other hand, from equations \eqref{phi_secondder} and 
\eqref{Hessian-matrix} and invoking Assumptions~(A) and (B), equation 
\eqref{KKT1} is semismooth provided that $\rho$ is semismooth, which is the 
case for piecewise smooth functions \cite[Proposition 7.4.6]{FP03}, such as the 
density functions that we considered in Appendix~\ref{app:examples_smoothing}. 
Finally, since $\phi_{\rm FB}$ is strongly semismooth \cite[Proposition 
7.4.8]{FP03}, then equation \eqref{eq:fb} is likewise a semismooth equation. 
Hence, we may use the semismooth Newton method for solving \eqref{eq:Phi_FB} for a fixed $
\mu$. 
Our warmstarting algorithm to obtain an initial point $(\w^0,\lambda^0)$ is 
described in Algorithm~\ref{alg:SSN}.\footnote{One may consider employing the 
semismooth Newton method, i.e., Algorithm~\ref{alg:SSN}, as a standalone 
algorithm for obtaining BKKT points. However, proceeding in this manner does 
not always lead to an accurate solution, primarily since this approach is only 
guaranteed to be locally convergent. Another particular hurdle is that apart 
from providing an initial guess for primal variables $(\w^0,\lambda^0)$, we 
also need to provide an initial guess for the Lagrange multipliers 
$(\zeta^0,\eta^0)$ when using Algorithm~\ref{alg:SSN}, which could influence 
the quality of the solution obtained by the algorithm. } Similar to Algorithm~\ref{algorithm}, we consider a sequence of equations \eqref{eq:Phi_FB} for decreasing values of $\mu$. In our 
experiments, we 
set 
$\tau_1 = 
0.8$, $\tau_2 = 0.1$, $\gamma_{\min} = 
10^{-6}$, $\mu_0 = 10$, and $\gamma_0=0.1$. We initialize 
Algorithm~\ref{alg:SSN} with $\w^0 = 100*\texttt{ones}(n,1)$ and $\lambda^0 = 
(\epsilon,0)$, $\zeta^0=0$, and $\eta^0=0$.
%, and $\eta^0$ is chosen so that the second and third 
%equations of \eqref{eq:Phi_FB} are satisfied, i.e., $\eta_1^0 \coloneqq 
%\nabla\varphi_{\mu_0}(\omega^0)^T \zeta^0$ and $\bar{\eta}^0\coloneqq \nabla 
%\bar{R}(\w^0)^T 
%\zeta^0$. 

\begin{algorithm} 
	\caption{(A semismooth Newton method for solving the bilevel KKT 
	system)}\label{alg:SSN}
	%\begin{algorithmic}[1]
	\begin{description}
		\item [Step 0] Choose $\mu_0, \gamma_0> 0$, $\tau_1,\tau_2, 
		\gamma_{\min} \in 
		(0,1)$, and $z^0 \coloneqq 
		(\w^0,\lambda^0,\zeta^0,\eta^0)$. Set 
		$k\coloneqq0$.
		
		\item [Step 1] Select an element $V^k \in \partial_C \Phi^{\mu_k}_{\rm 
		FB}(z^k)$, where $\partial_C$ denotes the Clarke subdifferential (see 
		\cite[Definition 7.1.1]{FP03}), and solve the linear system
			\[\Phi^{\mu_k}_{\rm FB}(z^k) + V^k \Delta z^k = 0.\]
		
		\item[Step 2] Set $z^{k+1} \coloneqq z^k + \Delta z^k$. 
		
		\item [Step 3] Set
			\[(\mu_{k+1}, {\gamma_{k+1}}) \coloneqq \begin{cases}
			(\mu_k, {\gamma}_k) & \text{if } \|\Phi^{\mu_k}_{\rm 
			FB}(z^k)\|  > {\gamma}_k \\
			(\tau_1\mu_k, \min \{ \tau_2{\gamma}_k, \gamma_{\min}\} ) & 
			\text{if } 
			\|\Phi^{\mu_k}_{\rm FB}(z^k)\| \leq {\gamma}_k
			\end{cases}\]
		If $\|\Phi^{\mu_k}_{\rm FB}(z^k)\| < \gamma_{\min}$, terminate the 
		algorithm. Otherwise, go to Step 1 and set $k\coloneqq k+1$.
	\end{description}
	%\end{algorithmic}
\end{algorithm}
%To compute the value of the function ${\rm erf}$ in $\phi_4$, we utilize the 
%MATLAB function \texttt{erf}.

\medskip 
In light of the SB-KKT conditions \eqref{SBKKT1}-\eqref{SBKKT6} and the value of the smoothing parameter $\mu$, we terminate the algorithm when either one of the following criteria is met:
\begin{enumerate}
\item The norms of the residuals of the equations 
in \eqref{bkkt1}-\eqref{bkktlast} are smaller than $10^{-2}$.
%As for condition\,\eqref{bkktlast}, we observe only the value of 
%$(\lambda^{\ast})^{\top}\eta^{\ast}$ because the nonnegativity of
%$(\lambda^{\ast},\eta^{\ast})$ is assured in the interior-point method. 
To estimate the index set $I(\w^{\ast})$ in conditions\,\eqref{SBKKT3} and 
\eqref{SBKKT4}, we regard $\w^k_i$ as zero if $\lvert \w^k_i\rvert\le 10^{-5}$.
\item $\mu_{k+1}\le 10^{-8}$. 
\end{enumerate}
%% taken from \2ed{the} UCI machine learning repository\,\cite{Lichman:2013}.
%% See Table~\ref{dataset} for the data sets actually used.  
%% For each data set, Algorithm~\ref{alg:smoothing} and \texttt{bayesopt} were applied to \eqref{al:0913}, respectively.}

The obtained results are summarized in Tables\,\ref{ta1}-\ref{ta4}, in which each column is described as follows.
Here, the averages are taken over the set of problems that are counted in success($\%$). 

\begin{table}[h]
\begin{center}
\begin{minipage}{\textwidth}
  \begin{tabular}{crl}
&$i$: &the smoothing function $\phi_i$\\  
&{val}: & average validation error at the resulting solution; the validation \\
& &  error is the least squares error for the validation data $A_1$ at the \\
& & obtained solution, i.e., the value of the objective function at the \\ 
& & resulting solution\\
&{test}: & average test error at the resulting solution; the test error is the\\
& &   the least squares error for the validation data $A_3$ at the obtained \\
& & solution, i.e., the value of the objective function at the resulting\\ 
& &  solution\\
&{bkkt}: & average residual of the BKKT conditions\\
%    &$\mu_{\rm end}$: &averaged value of $\mu_{k+1}$ on termination\\
    &${\rm sparsity}(\%)$: &average ratio of zero elements of the resulting 
    solution $\w^*$,\\
    &\ \ &in which each element $w_i$ is counted as zero if $\lvert 
    \w_i\rvert\le 10^{-5}$\\
&{time(s)}: & average time spent by the algorithm in seconds; in parenthesis, 
we \\
& & include the average time spent in the initialization phase via \\ 
& & Algorithm 
\ref{alg:SSN}\\
&{ssn.iter}: & average number of iterations for the initialization phase\\
&iter: & average number of iterations of Algorithm~\ref{algorithm} 
executed by employing \\ 
& & Matlab's fmincon built-in function. \\
    &{success($\%$)}: &ratio of problems for which BKKT points are computed 
    successfully\\
    &\ \ &in the sense that the first termination condition in the above is \\
    & &     satisfied
  \end{tabular}
\end{minipage}
\end{center}
\end{table}

%% In the tables, the results with the smoothing functions $\phi_4$ and $\phi_5$
%% are excluded. This is because as for $\phi_4$,
%% it tends to spend much more time
%% than the others, particularly on computing the integral function ${\rm erf}(\cdot)$ every time $\phi_4(x)$ is evaluated (we used the MATLAB command \texttt{integral} to compute the integral value). Indeed, for solving one instance with
%% $(n,m_1,m_2,p)=(500,1000,1000,0.5)$, the cputime spent is more than 800 seconds.
The best values are displayed 
in bold in the tables,  
%In addition, the smallest ones for $\mu_{\rm end}$ and ite are also in bold. 
with the results for the smoothing function $\phi_5$ excluded from the tables
%% $(n,m_1,m_2,p)=(500,1000,1000,0.5)$, the cputime spent is more than 800 seconds.
due to the overflow that often occurred when computing its gradient as $\mu$ 
gets smaller. % and it did not work well.
%\tred{We firast compare the results with $m_{\tr}=300$ and $n$}
%\tred{[[In light of the new experimental results, we need to revise the following observational facts.]]}
Now, the following insights are obtained from the numerical results. 

\paragraph{\alertsection{Comparison with $p=0.5,1$ and $m_2 = 300, 1000$}}
In terms of the sparsity of solutions obtained, we see that $\ell_{0.5}$ tends 
to attain sparser solutions than $\ell_{1}$. Indeed, it is evident from Table 2 
(resp., Table 4) that the solutions obtained for $p=0.5$ are sparser than those 
obtained by $p=1$ shown in Table 1 (resp., Table 3).  
%For example, by Tables\,\ref{ta3} and \ref{ta4}, ${\rm sparsity(\%)}$ for 
%$p=1$ takes values less than 10\%, whereas they are more than 60\% for 
%$p=0.5$. 
This is by virtue of the nonconvexity of $\ell_{p}$ with $p<1$.
Moreover, $\ell_{0.5}$ tends to attain solutions with better validation errors 
than $\ell_1$. 
%Indeed, values of val in Table\,\ref{ta3} with $p=1$ are around 
%$3.5\cdot10^{-3}$, while those in Table\,\ref{ta4} with $p=0.5$ are around 
%$7.2 
%\cdot 10^{-4}$. Similar observations can be gathered from Tables\,\ref{ta1} 
%and 
%\ref{ta2}.

%\paragraph{Comparison of results with $m_2=300,1000$:}
On the other hand, the problems with $m_2<n$ is related to the problem of 
finding sparse solutions of underdetermined linear systems.
Such kind of problems are often regarded more intractable than those with 
$m_2\ge n$, as illustrated by the obtained numerical results. When $p=1$, the 
success rate of the smoothing algorithm is largely diminished when $m_2<n$. In 
addition, it is clear from Tables \ref{ta1} and \ref{ta3} that for this 
instance, the algorithm required more time to solve the problems as compared 
when $m_2\geq n$. When $p<1$, while the average times spent by the algorithm are 
apparently not very distinct for both $m_2=1000$ and $m_2=300$, it is evident 
from Tables \ref{ta2} and \ref{ta4} that the success rate is also diminished 
for the latter case. Meanwhile, for the case $m_2<n$, we note that the success 
rate when $p=0.5$ is significantly better than when $p=1$.

\paragraph{\alertsection{Comparison of the four smoothing functions}}
In view of the validation and test errors, bilevel KKT residuals, sparsity, average time 
and success rates,
the qualities of the resulting solutions as well as the efficiency of the 
algorithm with different smoothing functions are comparable. From Table 1, we 
see that Algorithm~\ref{algorithm} with $\phi_4$ is the fastest method 
obtaining a 100$\%$ success rate in solving the problems, but the solutions 
obtained are neither the sparsest ones, nor do they correspond to the lowest 
validation and test errors. As these factors are quite important in evaluating the 
performance of the model, we observe that Algorithm~\ref{algorithm} equipped 
with smoothings functions $\phi_1$ and $\phi_2$ provide higher quality of 
solutions attained at a running time not significantly longer than that 
required by $\phi_4$. Considering these important criteria along with the 
success rates of the algorithms, we also observe from Tables 
\ref{ta2}-\ref{ta4} that the algorithm equipped with $\phi_1$ consistently 
obtains the best success rates with low validation error, as well as sparser 
solutions.

\begin{table}[h]
	\begin{center}
		\begin{minipage}{\textwidth}
			% \centering 
			\caption{Averaged results for $(n,m_2,m_1,p ) = (500, 1000, 
				1000, {1} )$} 
			\begin{tabular}{|c|c|c|c|c|c|c|c|c|}\hline
				$i$ & val & test &  bkkt & sparsity(\%) & time(s) & ssn.iter & 
				iter & success(\%) \\ \hline \hline 
				1 & \bftab 7.32e-03 & \bftab 7.31e-03 & 4.98e-03 & \bftab 45.8 
				& 187.5 (7.2) & 
				\bftab 182.1 & 34.9 & \bftab 100 \\ \hline 
				2 & 7.90e-03 & 7.83e-03 & 5.34e-03 & 43.4 & 175.5 (7.2) & 183.7 
				& 32.5 & \bftab 
				100 \\ \hline 
				3 & 9.17e-03 & 9.07e-03 & \bftab 4.23e-03 & 36.3 & 157.4 (7.2) 
				& 183.7 & 27.7 & 
				95 \\ \hline 
				4 & 1.08e-02 & 1.06e-02 & 4.59e-03 & 29.1 & {\bftab 148.9} 
				(7.1) & 185.2 & 
				{\bftab 26.4} 
				& 
				\bftab 
				100 \\ \hline 
				%5 & 1.17e-02 & 1.15e-02 & 4.81e-03 & 23.9 & {\bftab 120.5} 
				%(7.3) & 185.4 & 
				%\bftab 22.1 & \bftab 100 \\ \hline 
				6 & 1.01e-02 & 9.95e-03 & 4.80e-03 & 31.4 & 165.7 (7.2) & 182.9 
				& 30.9 & \bftab 
				100 \\ \hline 
			\end{tabular}
			\label{ta1}
		\end{minipage}
	\end{center}
\end{table}\hfill

%experiment 7
\begin{table}[h]
	\begin{center}
		\begin{minipage}{\textwidth}
			% \centering 
			\caption{Averaged results for $(n,m_2,m_1,p ) = (500, 1000, 
				1000, 0.5 )$} 
			\begin{tabular}{|c|c|c|c|c|c|c|c|c|}\hline
				$i$ & val & test &  bkkt & sparsity(\%) & time(s) & ssn.iter & 
				iter & success(\%) \\ \hline \hline 
				1 & 1.39e-03 & 1.39e-03 & \bftab 5.76e-03 & \bftab 84.7 & { 
				\bftab 198.9} 
				(7.5) & 180.9 & \bftab 31.2 
				& 
				\bftab 100 \\ \hline 
				2 & 1.36e-03 & 1.37e-03 & 6.66e-03 & \bftab 84.7 & 206.8 (7.4) 
				& 181.7 & 32.8 & 
				\bftab 100 \\ \hline 
				3 & \bftab 1.35e-03 & \bftab 1.36e-03 & 6.38e-03 & 83.5 & 213.3 
				(8.0) & 180.8 & 
				33.0 & \bftab 100 \\ \hline 
				4 & 3.01e-03 & 2.95e-03 & 6.96e-03 & 75.8 & 210.9 (7.6) & 183.3 
				& 32.5 & \bftab 
				100 \\ \hline 
				%5 & 1.67e-02 & 1.64e-02 & \bftab 4.03e-03 & 0.6 & {\bftab 
				%13.2} (7.2) & 183.3 
				%& 
				%\bftab 1.1 & 35 \\ \hline 
				6 & 3.02e-03 & 2.96e-03 & 6.05e-03 & 76.4 & 202.2 (7.4) & 
				\bftab 180.4 & 32.4 & 
				\bftab 100 \\ \hline 
			\end{tabular}
			\label{ta2}
		\end{minipage}
	\end{center}
\end{table}\hfill

%Experiment 1
\begin{table}[h]
	\begin{center}
		\begin{minipage}{\textwidth}
			% \centering 
			\caption{Averaged results for $(n,m_2,m_1,p ) = (500, 300, 
				300, 1 )$} 
			\begin{tabular}{|c|c|c|c|c|c|c|c|c|}\hline
				$i$ & val & test & bkkt & sparsity(\%) & time(s) & ssn.iter & 
				iter & success(\%) \\ \hline \hline 
				1 & 1.07e-02 & 1.10e-02 & 4.43e-03 & 58.6 & {\bftab 244.5} 
				(4.9) & 124.8 & 
				\bftab 44.4 & \bftab 50 \\ \hline 
				2 & \bftab 9.68e-03 & \bftab 1.02e-02 & 4.56e-03 & \bftab 59.9 
				& 268.6 (4.7) & 
				116.7 & 45.7 & 30 \\ \hline 
				3 & 1.23e-02 & 1.28e-02 & \bftab 3.85e-03 & 56.2 & 262.0 (5.2) 
				& 132.9 & 45.1 & 
				35 \\ \hline 
				4 & 1.17e-02 & 1.23e-02 & 5.32e-03 & 58.4 & 314.1 (6.4) & 147.2 
				& 52.7 & \bftab 
				50 \\ 
				\hline 
				%5 & 1.24e-02 & 1.25e-02 & 6.19e-03 & 58.1 & 296.5 (5.4) & 
				%141.5 & 56.4 & 
				%\bftab 
				%65 \\ \hline 
				6 & 2.24e-02 & 2.27e-02 & 7.66e-03 & 58.5 & 299.5 (4.4) & 
				\bftab 114.5 & 56.0 & 
				20 \\ \hline 
			\end{tabular}
			\label{ta3}
		\end{minipage}
	\end{center}
\end{table}\hfill	

%Experiment 2

\begin{table}[h]
	\begin{center}
		\begin{minipage}{\textwidth}
			% \centering 
			\caption{Averaged results for $(n,m_2,m_1,p ) = (500, 300, 
				300, 0.5 )$} 
			\begin{tabular}{|c|c|c|c|c|c|c|c|c|}\hline
				$i$ & val & test & bkkt & sparsity(\%) & time(s) & ssn.iter & 
				iter & success(\%) \\ \hline \hline 
				1 & 2.11e-03 & 2.25e-03 & \bftab 5.62e-03 & \bftab 80.8 & 185.3 
				(4.9) & \bftab 
				114.5 & 33.5 & \bftab 100 \\ \hline 
				2 & \bftab  2.06e-03 & 2.19e-03 & 6.00e-03 & 73.0 & {\bftab 
				180.6} (4.7) & 
				129.2 & 
				\bftab 33.2 & 85 \\ \hline 
				3 & 2.11e-03 & 2.20e-03 & 5.88e-03 & 75.2 & 189.5 (5.2) & 119.5 
				& 35.2 & 90 \\ 
				\hline 
				4 & \bftab 2.06e-03 & \bftab 2.15e-03 & 5.68e-03 & 79.2 & 200.9 
				(6.4) & 146.1 & 
				37.5 & 95 \\ \hline 
				%5 & 3.64e-03 & 3.57e-03 & 9.18e-03 & 77.2 & 244.8 (5.4) & 
				%164.8 & 43.5 & 20 \\ 
				%\hline 
				6 & 2.58e-03 & 2.58e-03 & 5.90e-03 & 75.2 & 196.6 (4.4) & 120.5 
				& 36.4 & 90 \\ 
				\hline 
			\end{tabular}
			\label{ta4}
		\end{minipage}
	\end{center}
\end{table}\hfill

\subsection{\alertsection{Problems with other regularizers}}
In this section, we solve problem\,\eqref{al:0913} with the regularizers 
$\psi_2(\|\w\|_p^p)$ and $\psi_3(\|\w\|_p^p)$ in place of $\|\w\|_p^p$, with
$\psi_2$ and $\psi_3$ defined in Appendix~\ref{app:penaltyfunctions}, where we 
set $a=1$ and $p=0.5$. 
%$(n,m_{\rm tr},m_{\rm val})=(500,1000,1000)$.  
%Moreover, we select $\phi_1$ and $\phi_6$ as smoothing functions in view of 
%the observation gained in the previous experiment. 
Both the experimental settings and the 20 synthetic problem-data of $A_i, b_i\ 
(i=1,2,3)$ are identical to the ones used in the preceding section.  The obtained 
results are summarized in Tables~\ref{ta5}-\ref{ta8}.

Similar to the remarks in the preceding sections, we observe that taking into 
account the quality of the solutions obtained as reflected by the validation 
errors and sparsity, together with the running times and success rates of the 
algorithm, we observe that Algorithm~\ref{algorithm} with the smoothing 
function $\phi_1$ has a consistent good performance among all the functions 
considered. 
%In both the cases, more than 95\% instances are successfully solved.
%
%In both Tables\,\ref{ta5} and \ref{ta6}, we observe that $\mu_{\rm end}$ for 
%$\phi_6$ are the smallest among the five smoothing functions, respectively, as 
%well as the results in Table~\ref{ta4} with the $\ell_{0.5}$ regularizer. 
%Moreover, those of $\phi_1$ are almost the largest although 
%the exactly largest one in Table~\ref{ta5} is attained by $\phi_2$.
%However, unlike the $\ell_{0.5}$ regularizer, superiority of $\phi_4$ in 
%time(s) is not observed.
%In comparison of Tables\,\ref{ta4}, \ref{ta5}, and \ref{ta6},
%the $\ell_{0.5}$ regularizer tends to gain sparser solutions according to 
%${\rm sparsity}(\%)$. 
%On the other hand, from ave-time(s), we see that the spent time for $\psi_3$ 
%tends to be the smaller than those for the $\ell_{0.5}$-regularizer and 
%$\psi_2$.

%Experiment 8
\begin{table}[h]
	\begin{center}
		\begin{minipage}{\textwidth}
			% \centering 
			\caption{Averaged results for $(n,m_2,m_1,p) = (500, 1000, 
				1000, 0.5)$ using $\psi_2$} 
			\begin{tabular}{|c|c|c|c|c|c|c|c|c|}\hline
				$i$ & val & test & bkkt & sparsity(\%) & time(s) & ssn.iter & 
				iter & success(\%) \\ \hline \hline 
				1 & 1.39e-03 & 1.39e-03 & \bftab 5.76e-03 & 84.7 & 276.7 (4.9) 
				& 
				180.9 & \bftab 31.2 & \bftab 100 \\ \hline 
				2 & 1.36e-03 & 1.37e-03 & 6.66e-03 & \bftab 84.7 & 284.4 (4.7) 
				& 181.7 & 32.8 & \bftab 100 \\ \hline 
				3 & \bftab 1.35e-03 & \bftab 1.36e-03 & 6.22e-03 & 84.1 & 259.4 
				(5.2) & 180.7 & 32.9 & 95 \\ \hline 
				4 & 3.01e-03 & 2.95e-03 & 6.96e-03 & 75.8 & 262.6 (6.4) & 183.3 
				& 32.5 & \bftab 100 \\ \hline 
				%				5 & 1.67e-02 & 1.64e-02 & \bftab 4.03e-03 & 0.6 
				%& {\bftab 19.9} 
				%				(5.4) & 183.3 & \bftab 1.1 & 35 \\ \hline 
				6 & 3.10e-03 & 3.05e-03 & 5.88e-03 & 76.0 & \bftab 244.1 (4.4) 
				& 
				\bftab 180.3 & 32.1 & 95 \\ \hline 
			\end{tabular}
			\label{ta5}
		\end{minipage}
	\end{center}
\end{table}\hfill

%Experiment 4
\begin{table}[h]
	\begin{center}
		\begin{minipage}{\textwidth}
			% \centering 
			\caption{Averaged results for $(n,m_2,m_1,p ) = (500, 300, 
				300, 0.5 )$ using $\psi_2$} 
			\begin{tabular}{|c|c|c|c|c|c|c|c|c|}\hline
				$i$ & val & test & bkkt & sparsity(\%) & time(s) & ssn.iter & 
				iter & success(\%) \\ \hline \hline 
				1 & \bftab 2.06e-03 & 2.17e-03 & 5.78e-03 & \bftab 80.7 & 200.4 
				(4.9) & \bftab 
				112.5 & \bftab 32.8 & \bftab 95 \\ \hline 
				2 & \bftab  2.06e-03 & 2.19e-03 & 6.00e-03 & 73.0 & 211.8 (4.7) 
				& 129.2 & 33.2 
				& 85 \\ 
				\hline 
				3 & 2.11e-03 & 2.17e-03 & 5.90e-03 & 74.9 & {\bftab 196.3} 
				(5.2) & 119.6 & 33.2 
				& 85 \\ \hline 
				4 & \bftab  2.06e-03 & \bftab 2.15e-03 & \bftab 5.68e-03 & 79.2 
				& 237.3 (6.4) & 
				146.1 & 
				37.5 & \bftab 95 \\ \hline 
				%5 & 3.64e-03 & 3.57e-03 & 9.18e-03 & 77.2 & 299.7 (5.4) & 
				%164.8 & 43.5 & 20 \\ 
				%\hline 
				6 & 2.58e-03 & 2.58e-03 & 5.90e-03 & 75.2 & 217.2 (4.4) & 120.5 
				& 36.4 & 90 \\ 
				\hline 
			\end{tabular}
			\label{ta6}
		\end{minipage}
	\end{center}
\end{table}\hfill

%Experiment 9
\begin{table}[h]
	\begin{center}
		\begin{minipage}{\textwidth}
			% \centering 
			\caption{Averaged results for $(n,m_2,m_1,p) = (500, 1000, 
				1000, 0.5)$ using $\psi_3$} 
			\begin{tabular}{|c|c|c|c|c|c|c|c|c|}\hline
				$i$ & val & test & bkkt & sparsity(\%) & time(s) & ssn.iter & 
				iter & success(\%) \\ \hline \hline 
				1 & 1.39e-03 & 1.39e-03 & \bftab 5.76e-03 & 84.7 & 193.1 (4.9) 
				& 180.9 & 
				\bftab 31.2 & \bftab 100 \\ \hline 
				2 & 1.37e-03 & 1.38e-03 & 6.68e-03 & \bftab 84.8 & 193.9 (4.7) 
				& 181.6 & 31.7 & 
				95 \\ \hline 
				3 & \bftab 1.35e-03 & \bftab 1.36e-03 & 6.38e-03 & 83.5 & 195.0 
				(5.2) & 180.8 & 
				33.0 & \bftab 100 \\ \hline 
				4 & 3.01e-03 & 2.95e-03 & 6.96e-03 & 75.8 & {\bftab 192.4} 
				(6.4) & 183.3 & 32.5 
				& \bftab 100 \\ \hline 
				6 & 3.02e-03 & 2.96e-03 & 6.05e-03 & 76.4 & 214.5 (5.4) & 
				\bftab 180.4 & 32.4 & 
				\bftab 100 \\ \hline 
			\end{tabular}
			\label{ta7}
		\end{minipage}
	\end{center}
\end{table}\hfill

%Experiment 5
\begin{table}[h]
	\begin{center}
		\begin{minipage}{\textwidth}
			% \centering 
			\caption{Averaged results for $(n,m_2,m_1,p ) = (500, 300, 
				300, 0.5 )$ using $\psi_3$} 
			\begin{tabular}{|c|c|c|c|c|c|c|c|c|}\hline
				$i$ & val & test &  bkkt & sparsity(\%) & time(s) & ssn.iter & 
				iter & success(\%) \\ \hline \hline 
				1 & 2.11e-03 & 2.25e-03 & \bftab 5.62e-03 & \bftab 80.8 & 
				227.6 (4.9) & \bftab 114.5 & 33.5 & \bftab 100 \\ \hline 
				2 & \bftab 2.06e-03 & 2.19e-03 & 6.00e-03 & 73.0 & 227.8 (4.7) 
				& 129.2 
				& \bftab 33.2 & 85 \\ \hline 
				3 & 2.11e-03 & 2.20e-03 & 5.88e-03 & 75.2 & 231.4 (5.2) & 119.5 
				& 35.2 & 90 \\ \hline 
				4 & \bftab 2.06e-03 & \bftab 2.15e-03 & 5.68e-03 & 79.2 & 
				{\bftab 220.9} (6.4) & 146.1 & 37.5 & 95 \\ \hline 
				%				5 & 3.64e-03 & 3.57e-03 & 9.18e-03 & 77.2 & 
				%264.5 (5.4) & 164.8 
				%				& 43.5 & 20 \\ \hline 
				6 & 2.58e-03 & 2.58e-03 & 5.90e-03 & 75.2 & 231.5 (4.4) & 120.5 
				& 36.4 & 90 \\ \hline 
			\end{tabular}
			\label{ta8}
		\end{minipage}
	\end{center}
\end{table}\hfill

\subsection{\alertsection{Comparisons with Bayesian optimization}}

{As mentioned in the introduction, two popular methods for dealing with the 
hyperparameter learning problem include the grid search method and Bayesian 
optimization. For practical purposes, however, grid search algorithm is not a 
viable approach due to the necessity of solving the lower level problem 
\eqref{lower-levelproblem0} for many values of the hyperparameters 
$(\lambda_1,\dots, \lambda_r)$, as was also demonstrated in \cite{OTKW21}. 
Hence, we only compare our approach with Bayesian optimization. As shown in 
Table \ref{ta9}, our approach needed only roughly 25$\%$ of the time required 
by Bayesian optimization for $p=0.5$, while still achieving low validation and 
test errors, as well as sparse models. For $p=1$, while the Bayesian 
optimization strategy attained sparser solutions, it required almost eight 
times more computing time, and the validation and test errors are significantly 
larger than the one obtained by our approach. }

\begin{table}[h]
	\begin{center}
		\begin{minipage}{\textwidth}
			 \centering 
			\caption{Averaged results for $(n,m_2,m_1) = (250, 500, 
				500)$ using $\psi_1$} 
			\begin{tabular}{|c|c|c|c|c|c|}\hline
				$p$ & method & val & test & sparsity(\%) & time(s) \\ \hline 
				\multirow{2}*{1} & Algorithm 1 w/ $\phi_1$ & \bftab 4.15e-03 & 
				\bftab 4.12e-03 & 41.7 & 
				\bftab 39.4 \\ 
				& Bayesian Optimization & 7.10e+00 & 6.79e+00 & \bftab 50.4 & 
				314.8 
				\\ \hline 
				\multirow{2}*{0.5} & Algorithm 1 w/ $\phi_1$ & \bftab 6.60e-04 
				& 
				\bftab 6.67e-04 & \bftab  72.7 
				& \bftab 39.9 \\ 
				& Bayesian Optimization & 4.47e+01 & 4.47e+01 & 19.1 & 160.8 
				\\ \hline  
			\end{tabular}
			\label{ta9}
		\end{minipage}
	\end{center}
\end{table}\hfill
%------------------------------------------------------------------------------------ Section 5
 \section{Conclusion}
 This paper considers a class of nonsmooth, possibly nonconvex and 
 non-Lipschitz regularizers for the best hyperparameter selection problem using 
 a bilevel programming strategy. The class of regularizers we consider subsumes 
 the traditional $\ell_p$ regularizer, which is the focus of the earlier work 
 \cite{OTKW21}. \alert{We propose new bilevel KKT conditions which are tighter than 
 the SBKKT conditions proposed in \cite{OTKW21}. These are necessary conditions 
 for the original bilevel problem \eqref{bileveloptimization} when $p=1$, and 
 are necessary conditions for the relaxed problem \eqref{first-orderoptimality} 
 when $p<1$.} The convergence analysis of the smoothing algorithm 
 presented in this paper is unified, in the sense that it is not limited to the 
 chosen smoothing function, unlike the previous work \cite{OTKW21} where the 
 analysis is centered on the selected smoothing function. Finally, we proved 
 our main convergence result under a milder constraint qualification. More 
 precisely, we only assumed the Mangasarian-Fromovitz constraint qualification 
 (MFCQ) for our convergence analysis, which is weaker than the linearly 
 independent constraint qualification (LICQ) used in \cite{OTKW21}. For our 
 numerical simulations, we compared the performance of six smoothing functions 
 in solving the bilevel programming problem using different regularizers. 
 Theoretically, we can use these smoothing functions for all the regularizers 
 considered as their corresponding density functions satisfy \alert{Assumption~(B)}. \alert{On 
 the other hand, our practical experience revealed that the smoothing function 
 $\phi_1$ provides the best performance when taking into account the validation 
 and test errors of the resulting model, as well as the sparsity of the 
 solution and running time of the algorithm. Interestingly, the function $\phi_1$ is the closest approximation to the regularizer $R_1$ among all the smoothing functions, as proved in Appendix~\ref{app:examples_smoothing}. }

 \medskip

%--------------------------------------------------------------------------
\section*{Acknowledgement}
Part of this work was conducted while J. H. Alcantara was a postdoctoral fellow 
at National Taiwan Normal University, and C. T. Nguyen was a research assistant 
at National Taiwan Normal University. The work of J.-S. Chen was supported by 
the Ministry of Science and Technology, Taiwan.

\begin{appendices}
%--------------------------------------------------------------------------
\section{Penalty functions that satisfy Assumption 
(A)}\label{app:penaltyfunctions}

We consider four penalty functions as follows:
 \[
 \psi_1(t)=t, ~~ \psi_2(t)= \log(1+at), ~~ \psi_3(t)=\frac{at}{1+at}, ~~ \psi_4(t)=\frac{-1}{1+at},
 \]
 where $a$ is positive number. In particular,
 \begin{description}
 \item [(1)] $\psi_1$ is a soft-thresholding penalty function \cite{HHM08,T96}. We have $\psi'_1(t)=1$ and $\psi''_1(t)=0$. Hence, it satisfies Assumption~(A).
 
 \item [(2)]  $\psi_2$ is a logistic penalty function \cite{NNZC08}. We have
 \[
 \psi'_2(t) = \frac{a}{1+at}, ~~ \psi''_2(t) = -\frac{a^2}{(1+at)^2},
 \]
 which implies that $0 < \lim_{t\to 0}\psi_2'(t) = a$ and $\lvert\psi''_2(t)\rvert \leq a^2$. Hence, it satisfies Assumption~(A).

 \item [(3)]  $\psi_3$ is fraction penalty function \cite{CZh10, NNZC08}. We have
 \[
 \psi'_3(t) = \frac{a}{(1+at)^2}, ~~ \psi''_3(t) = -\frac{2a^2}{(1+at)^3},
 \]
 which implies that $0 < \lim_{t\to 0}\psi_3'(t) = a$ and $\lvert\psi''_3(t)\rvert\leq 2a^2$. Hence, it satisfies Assumption~(A).

 \item [(4)] For function $\psi_4$, we have
 \[
 \psi'_4(t) = \frac{a}{(1+at)^2}, ~~ \psi''_4(t) = -\frac{2a^2}{(1+at)^3},
 \]
 which implies that $0 < \lim_{t\to 0}\psi_4'(t) = a$ and $\lvert\psi''_4(t)\rvert \leq 2a^2$. Hence, it satisfies Assumption~(A).
 \end{description}

%---------------------------------------------------------------------------------------
 \section{\alert{Examples of Smoothing Functions}}\label{app:examples_smoothing}
 A key aspect in successful numerical implementations of a smoothing algorithm is the choice of the approximating functions. Here, we enumerate six smoothing functions that we will use in our numerical simulations.

 %% There are many kernel (density) functions commonly used in statistics (see also \cite{CM96}). Some kernel functions satisfying \eqref{kernelcondition} are given as follows.
 There are many density functions commonly used and called kernel functions in statistics (see also \cite{CM96}). Some density functions satisfying \eqref{kernelcondition} are given as follows.
 \begin{eqnarray*}
 	\rho_1(x) &\coloneqq&
 	\left\{
 	\begin{array}{clc}
 		\frac{35}{32}(1-x^2)^3 & {\rm if} \; \lvert x \rvert \leq 1, \\
 		0 & \mbox{otherwise}.
 	\end{array}
 	\right.\\
 	\rho_2(x) &\coloneqq&
 	\left\{
 	\begin{array}{clc}
 		\frac{15}{16}(1-x^2)^2 & \mbox{if} \; \lvert x \rvert \leq 1, \\
 		0 &\mbox{otherwise}.
 	\end{array}
 	\right.\\
 	\rho_3(x) &\coloneqq&
 	\left\{
 	\begin{array}{clc}
 		\frac{3}{4}(1-x^2) & \mbox{if} \; \lvert x \rvert \leq 1, \\
 		0 &\mbox{otherwise},
 	\end{array}
 	\right.\\
 	\rho_4(x) &\coloneqq& \frac{1}{\sqrt{2\pi}}e^{-\frac{x^2}{2}}\quad \forall 
 	x \in \Re.\\
 	\rho_5(x) &\coloneqq& \frac{e^{-x}}{(1+e^{-x})^2}.\\
 	\rho_6(x) &\coloneqq& \frac{1}{(x^2 + \alert{1})^{\frac{3}{2}}}.\\
 \end{eqnarray*}
 
 Following the discussion in Section~\ref{subsec:smoothingfunctions}, the corresponding smoothing functions of $\lvert x \rvert$ are given as follows:
 \begin{eqnarray*}
 	\phi_1(\mu, x) &\coloneqq&
 	\left\{
 	\begin{array}{clc}
 		-\frac{5x^8}{128\mu^7} + \frac{7x^6}{32\mu^5} - \frac{35x^4}{64\mu^3} + \frac{35x^2}{32\mu} + \frac{35\mu}{128} & \mbox{if}\;  \lvert x \rvert \leq \mu, \\
 		\lvert x \rvert & \mbox{if}\; \lvert x \rvert > \mu.
 	\end{array}
 	\right. \label{SmoothF1}\\
 	\phi_2(\mu, x) &\coloneqq&
 	\left\{
 	\begin{array}{clc}
 		\frac{x^6}{16\mu^5} - \frac{5x^4}{16\mu^3} + \frac{15x^2}{16\mu} + \frac{5\mu}{16} & \mbox{if}\;  \lvert x \rvert \leq \mu, \\
 		\lvert x \rvert & \mbox{if}\; \lvert x \rvert > \mu.
 	\end{array}
 	\right. \label{SmoothF2}\\
 	\phi_3(\mu, x) &\coloneqq&
 	\left\{
 	\begin{array}{clc}
 		-\frac{x^4}{8\mu^3} + \frac{3x^2}{4\mu} + \frac{3\mu}{8} & \mbox{if}\;  \lvert x \rvert \leq \mu, \\
 		\lvert x \rvert & \mbox{if}\;  \lvert x \rvert > \mu.
 	\end{array}
 	\right. \label{SmoothF3}\\
 	\phi_4(\mu, x) &\coloneqq& x{\rm erf}\left(\frac{x}{\sqrt{2}\mu}\right)
 	+ \sqrt{\frac{2}{\pi}}\mu e^{-\frac{x^2}{2\mu^2}}, \label{SmoothF4}\\
 	\phi_5(\mu, x) &\coloneqq& \mu
 	\begin{bmatrix}
 		\log \begin{pmatrix}
 			1 + e^{-\frac{x}{\mu}} \\
 		\end{pmatrix} +
 		\log \begin{pmatrix}
 			1 + e^\frac{x}{\mu}\\
 		\end{pmatrix} \\
 	\end{bmatrix}.  \label{SmoothF5} \\
 	\phi_6(\mu, x) &\coloneqq& \sqrt{\alert{\mu^2} + x^2}. \label{SmoothF6}
 \end{eqnarray*}
 Here, the error function is defined by
 \[
 \mbox{erf}(x) = \frac{2}{\sqrt{\pi}}\int_0^x e^{-u^2}du \quad \forall x \in \Re.
 \] 
 
 \medskip
 \noindent The graphs of $\lvert x \rvert$ and $\phi_i(\mu,x)$, $i=1,2,\dots,6$ with $\mu=0.25$ are illustrated in Figure 1. From the graphs, we infer the following inequality relating the smoothing functions:
 \begin{equation*}\label{relation-phi}
\alertchieu{ \begin{cases}
 \lvert x \rvert \leq \phi_1(\mu, x) \leq \phi_2(\mu, x) \leq \phi_3(\mu, x) \leq \phi_4(\mu, x) \leq \phi_5(\mu, x), \phi_6(\mu, x).\\
 \mbox{there exists $\alpha>0$ such that}~ \phi_6(\mu, x) \leq \phi_5(\mu, x) ~ \mbox{for all}~ x\in [-\alpha,\alpha].
 \end{cases}}
 \end{equation*}
 \noindent It is not difficult to show that the relation $\lvert x \rvert \leq 
 \phi_1(\mu, x) \leq \phi_2(\mu, x) \leq \phi_3(\mu, x)$, while the proof of 
 the relation \alertchieu{$\phi_3(\mu, x) \leq \phi_4(\mu, x) \leq \phi_5(\mu, 
 x)$} can be found in \cite{SNC19}. \alertchieu{Using the same proof technique 
 in \cite{SNC19}, one can easily achieve the remaining inequalities}. On the 
 other hand, the graphs of the corresponding smoothing functions for $\lvert x 
 \rvert^p$ where $p\in (0,1]$ is shown in Figures \ref{fig:smfnc_p=1} and \ref{fig:smfnc_p<1}. \alert{We note that the} 
 smooth approximation $\phi_6$ is the 
 function used in \cite{OTKW21} for their smoothing algorithm for 
 \eqref{bileveloptimization} with $R_1(\omega) \coloneqq \sum_{i=1}^n 
 \lvert\omega_i\rvert^p$ $(0 < p \leq 1)$. 
 %----------------------------------------------------------------------------------------------- Figure 1
 
 \begin{figure}[!ht]
 	\centering
 	
 	\includegraphics[scale=0.5]{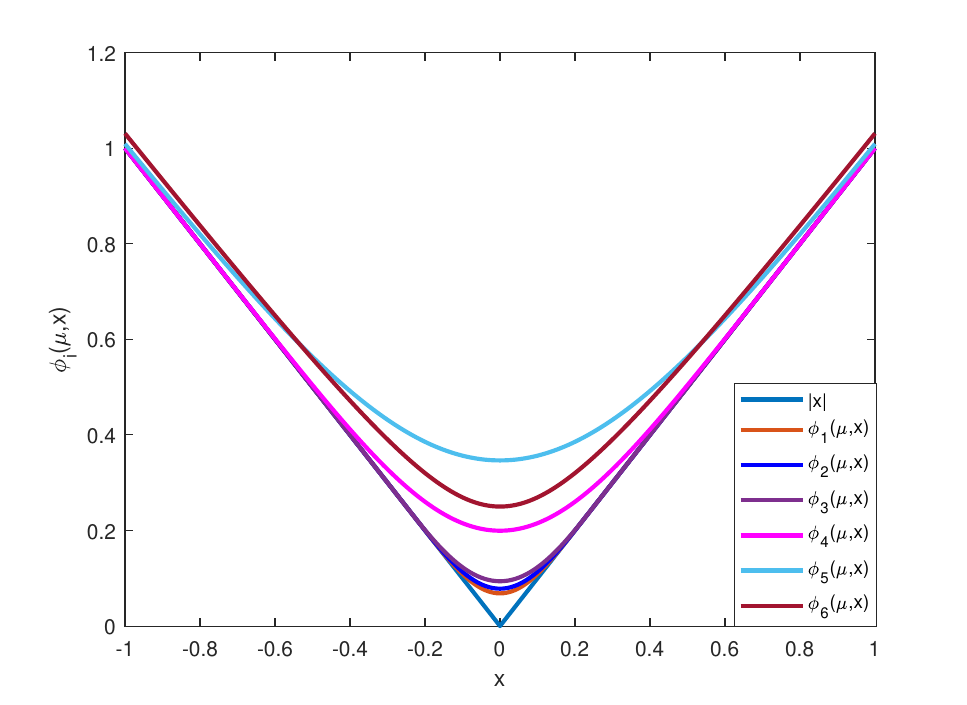}\\
 	\caption{Graph of $\lvert x \rvert$ and $\phi_i(\mu,x)$, $i=1,2,\dots,6$ with $\mu=0.25$.}
 	\label{fig:smfnc_p=1}
 \end{figure}

 \begin{figure}[!ht]
 	\centering
 	
 	\includegraphics[scale=0.5]{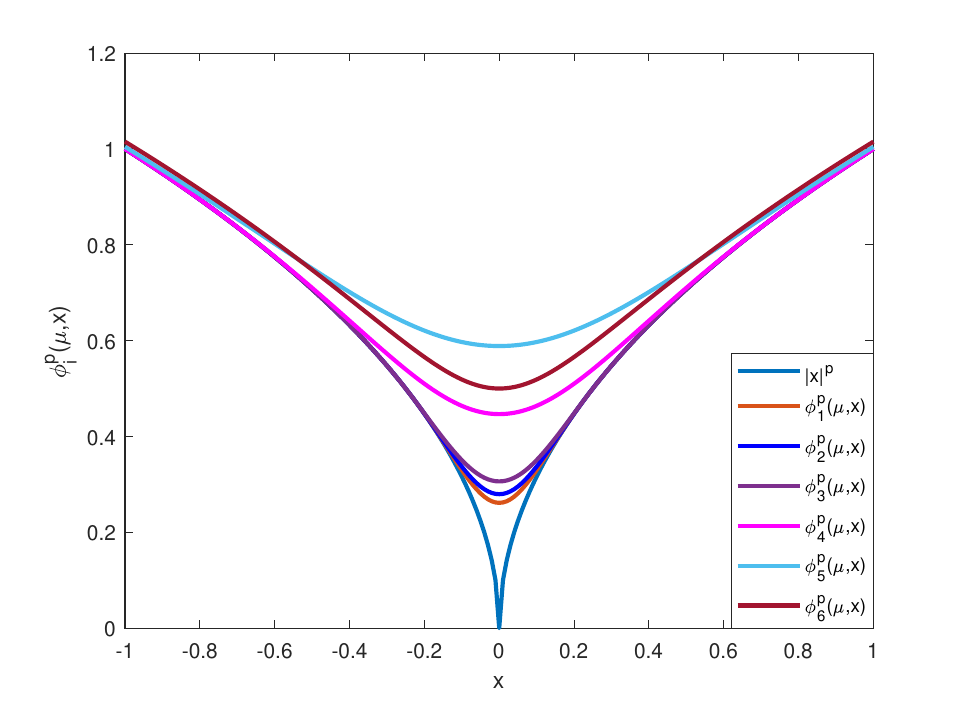}\\
 	\caption{Graph of $\lvert x \rvert^p$ and $(\phi(\mu,x))^p$, $i=1,2,\dots,6$ with $\mu=0.25$ and $p=0.5$.}
 	\label{fig:smfnc_p<1}
 \end{figure} 
 
 \medskip

% This function is simply $\phi_6 (\frac{\mu}{2},x)$ and can in fact be obtained by choosing the density function $\rho (t) = \frac{1}{2}(x^2+1)^{-3/2}$. 

 In this paper, we consider the six functions above and determine which 
 approximation is the best suitable in solving \eqref{bileveloptimization} with 
 $R_1(\omega)$ satisfying Assumption~(A). It is easy to check that the six 
 density functions as above satisfy Assumptions \alert{(B1)-(B2)}. Condition 
 (B3), on the other hand, holds by choosing $c=4$, and $r=2$. Indeed, 
 \[
 1-\frac{4}{4+S^2} \leq \sqrt{1-\frac{4}{4+S^2}}=2\int\limits_{0}^{S}\rho_3(s)~ds \leq 2\int\limits_{0}^{S}\rho_i(s)~ds ~~ \forall i=1,\dots,6.
 \]
 According to Assumption~(B4), only the functions $\rho_4$, $\rho_5$ and 
 $\rho_6$ can be used (theoretically) for the case $p=1$.
 
% \section{Initialization of Algorithm~\ref{algorithm}}
 
 \section{Proof of Lemma\,\ref{propertyMFCQ}}\label{sec:appB}
 In this appendix, we give a proof of Lemma\,\ref{propertyMFCQ}.
\begin{proof}
By Assumption~(C), we know that there exists $\bar{d}\in \Re^{n+r}$ such that 
\eqref{MFCQ1} and \eqref{MFCQ2} hold. Meanwhile, we have from the formula of 
$\Psi_j$ that 
	\begin{equation}
	\nabla _{(\omega,\lambda)} \Psi _j (\omega,\lambda) = \begin{cases}
	\nabla _{(\omega,\lambda)} \Phi_j (\omega,\lambda) & \text{if}~j\notin I(\omega^*) \\ 
	\left[ \begin{array}{c}
	e_j \\ 0_r
	\end{array}\right] & \text{if}~j\in I(\omega^*) , 
	\end{cases} \label{nablaPsi_j}
	\end{equation}
where $e_j$ is the $j$th standard unit vector in $\Re^n$ and $0_r$ denotes the 
zero vector in $\Re^r$. It is then clear from \eqref{nablaPsi_j} and 
\eqref{MFCQ1} that $\bar{d}_j = 0$ for all $j\in I(\omega^*)$. Consequently, 
letting $d\in \Re^{n-\lvert I(\omega^*)\rvert+r}$ be the vector $d\coloneqq 
(\bar{d})_{j\notin I(\omega^*)}$, it follows from \eqref{MFCQ1} and 
\eqref{MFCQ2} that equations \eqref{MFCQ1_1} and \eqref{MFCQ2_2} hold. 
\medskip 

It remains to show that $\{\nabla_{(\tomega,\lambda)}\Phi_j(\omega^*, 
\lambda^*)\}_{j\notin I(\omega^*)} $ is linearly independent. To this end, note 
first that we have from Assumption~(C) the linear independence of $\{ \nabla 
_{(\omega,\lambda)}\Psi _j (\omega^*,\lambda^*) \}_{j=1}^n$, that is, the 
matrix 
 \[
 M \coloneqq \left[\left(\nabla_{(\omega,\lambda)}\Psi_j(\omega^*, 
 \lambda^*)\right)_{(j\notin I(\omega^*))}, 
 \left(\nabla_{(\omega,\lambda)}\Psi_j(\omega^*, \lambda^*)\right)_{(j\in 
 I(\omega^*))}\right] \in \Re^{(n+r)\times n} 
 \]
 has full column rank. Using equation \eqref{nablaPsi_j} and switching the rows 
 of $M$ so that the first $\lvert I(\omega^*)\rvert$ rows correspond to the 
 index set $I(\omega^*)$, we have that the matrix 
 \[\begin{bmatrix}
 	(\nabla_{\hatomega}\Phi_j(\omega^*, \lambda^*))_{j\alert{\notin} 
 	I(\omega^*)}&E_{\lvert I(\omega^*)\rvert}\\ 
 	(\nabla_{(\tomega,\lambda)}\Phi_j(\omega^*, \lambda^*))_{j\notin 
 	I(\omega^*)}&O_{(n-\lvert I(\omega^*)\rvert + r)\times \lvert 
 	I(\omega^*)\rvert}\\ 
 \end{bmatrix}\]
 has full column rank, where $\hatomega\coloneqq (\omega_j)_{j\in 
 I(\omega^*)}$, $E_s$ 
 denotes the identity matrix of order $s$, and $O_{s\times t}$ is the zero 
 matrix of size $s\times t$. Since the upper and lower right blocks of the 
 above matrix are the identity matrix and zero matrix, respectively, a series 
 of elementary column operations leads us to conclude that 
  \[\begin{bmatrix}
 	O_{\lvert I(\omega^*)\rvert\times (n-\lvert I(\omega^*)\rvert)}&E_{\lvert I(\omega^*)\rvert}\\ 
 	(\nabla_{(\tomega,\lambda)}\Phi_j(\omega^*, \lambda^*))_{j\notin 
 	I(\omega^*)}&O_{(n-\lvert I(\omega^*)\rvert + r)\times \lvert 
 	I(\omega^*)\rvert}\\ 
 \end{bmatrix}\]
 also has full column rank. As a consequence, 
 $\{\nabla_{(\tomega,\lambda)}\Phi_j(\omega^*, \lambda^*)\}_{j\notin 
 I(\omega^*)}$ is linearly independent, as desired.
 \end{proof}

\end{appendices}

%%===========================================================================================%%
%% If you are submitting to one of the Nature Portfolio journals, using the eJP submission   %%
%% system, please include the references within the manuscript file itself. You may do this  %%
%% by copying the reference list from your .bbl file, paste it into the main manuscript .tex %%
%% file, and delete the associated \verb+\bibliography+ commands.                            %%
%%===========================================================================================%%


\begin{thebibliography}{9}
\bibitem{ALNCC20}
Alcantara, J. H., Lee, C. H., Nguyen, C. T., Chang, Y. L., Chen, J.-S.: On 
construction of new NCP functions. Oper. Res. Letters. {\bf 48}(2), 115--121 
(2020).

\bibitem{BB12}
Bergstra, J., Bengio, Y.: Random search for hyper-parameter optimization. J. Mach. Learn. Res. {\bf 13}(10), 281--305 (2012)

\bibitem{BHJKP06}
Bennett, K. P., Hu, J., Ji, X., Kunapuli, G., Pang, J-S.: Model selection via bilevel optimization. In The 2006 IEEE International Joint Conference on Neural Network Proceedings. 1922--1929 (2006)

\bibitem{BKJP08}
Bennett, K. P., Hu, J., Ji, X., Kunapuli, G., Pang, J-S.: Bilevel optimization and machine learning. Computational Intelligence: Research Frontiers. WCCI 2008. Lecture Notes in Computer Science, vol. 5050, Berlin, Heidelberg, 2008. Springer.

\bibitem{BC13}
Bian, W., Chen, X.: Worst-case complexity of smoothing quadratic regularization methods for non-Lipschitzian optimization. SIAM J. Optim. {\bf 23}(3), 1718--1741 (2013)

\bibitem{BM73}
Bracken, J., McGill, J.: Mathematical programs with optimization problems in the constraints. Oper. Res. {\bf 21}(1), 37--44 (1973)

\bibitem{C12}
Chen, X.: Smoothing methods for nonsmooth, nonconvex minimization. Math. Program. {\bf 134}(1), 71--99 (2012)

\bibitem{CM96}
Chen, C., Mangasarian, O. L.: A class of smoothing functions for nonlinear and mixed complementarity problems. Comput. Optim. Appl. {\bf 5}(2), 97--138 (1996)

\bibitem{CNY13}
Chen, X., Niu, L., Yuan, Y.: Optimality conditions and a smoothing trust region Newton method for nonLipschitz optimization. SIAM J. Optim. {\bf 23}(3), 1528--1552 (2013)

\bibitem{CXY10}
Chen, X., Xu, F., Ye, Y.: Lower bound theory of nonzero entries in solutions of $\ell_2$-$\ell_p$ minimization. SIAM J. Sci. Comput. {\bf 32}(5), 2832--2852 (2010)

\bibitem{CZh10}
Chen, X., Zhou, W.: Smoothing nonlinear conjugate gradient method for image restoration using nonsmooth nonconvex minimization. SIAM J. Imaging Sciences. {\bf 3}(4), 765--790 (2010)

\bibitem{C75}
Clarke, F. H.: Generalized gradients and applications. Trans. Amer. Math. Soc. {\bf 205}, 247--262 (1975)

\bibitem{C90}
Clarke, F. H.: Optimization and nonsmooth analysis. Classics Appl. Math. {\bf 5}, SIAM, Philadelphia (1990)

\bibitem{CMS07}
Colson, B., Marcotte, P., Savard, G.: An overview of bilevel optimization. Ann. Oper. Res. {\bf 153}, 234--256 (2007) 

\bibitem{D03}
Dempe, S.: Annotated bibliography on bilevel programming and mathematical programs with equilibrium constraints. Optimization. {\bf 52}(3), 333--359 (2003)

\bibitem{K03}
Kruger, A. Y.: On Fr\'{e}chet subdifferentials. J. Math. Sci. {\bf 116}(3), 3325--3358 (2003)

\bibitem{FP03}
Facchinei, F., Pang, J.-S.: Finite Dimensional Variational Inequalities and 
Complementarity Problems. Volumes I and II. Springer Verlag, New York (2003).

\bibitem{KP13}
Kunisch, K., Pock, T.: A bilevel optimization approach for parameter learning in variational models. SIAM J. Imaging Sciences. {\bf 6}(2), 938--983 (2013)

\bibitem{HHM08}
Huang, J., Horowitz, J. L., Ma, S.: Asymptotic properties of bridge estimators in sparse high-dimensional regression models. Ann. Statist. {\bf 36}(2) 587--613 (2008)

\bibitem{MBB09}
Moore, G. M., Bergeron, C., Bennett, K. P.: Nonsmooth bilevel programming for hyperparameter selection. In IEEE International Conference on Data Mining Workshops. pp. 374--381 (2009)

\bibitem{MBB11}
Moore, G. M., Bergeron, C., Bennett, K. P.: Model selection for primal SVM. {\bf 85}, 175--208 (2011)

\bibitem{NNZC08}
Nikolova, M., Ng, M. K., Zhang, S., Ching, W.-K.: Efficient reconstruction of piecewise constant images using nonsmooth nonconvex minimization. SIAM J. Imaging Sciences. {\bf 1}(1), 2--25 (2008)

\bibitem{ORBP15}
Ochs, P., Ranftl, R., Brox, T., Pock, T.: Bilevel optimization with nonsmooth lower level problems. In International Conference on Scale Space and Variational Methods in Computer Vision (SSVM) (2015)

\bibitem{ORBP16}
Ochs, P., Ranftl, R., Brox, T., Pock, T.: Techniques for gradient-based bilevel optimization with non-smooth lower level problems. J. Math. Imaging Vis. {\bf 56}(2), 175--194 (2016)

\bibitem{OTKW21}
Okuno, T., Takeda, A., Kawana, A., Watanabe, M.: On $\ell_p$-hyperparameter 
learning via bilevel nonsmooth optimization. J. Mach. Learn. Res. {\bf 
22}(245), 1--47 (2021)

\bibitem{RW09}
Rockafellar, R. T., Wets, R. J.-B.: Variational Analysis. vol. 317. Springer 
Science \& Business Media (2009)

\bibitem{SNC19}
Saheya, B., Nguyen, C. T., Chen, J.-S.: Neural network based on systematically generated smoothing functions for absolute value equation. J. Appl. Math. Comput. {\bf 61}, 533-558 (2019)

\bibitem{SMD18}
Sinha, A., Malo, P., Deb, K.: A review on bilevel optimization: from classical to evolutionary approaches and applications. IEEE Trans. Evol. Comput. {\bf 22}(2), 276--295 (2018)

\bibitem{T96}
Tibshirani, R.: Regression shrinkage and selection via the Lasso. J. R. Statist. Soc. Ser. B. {\bf 58}(1), 267--288 (1996)

\bibitem{TBP20}
Tso, W. W., Burnak, B., Pistikopoulos, E. N.: HY-POP: Hyperparameter optimization of machine learning models through parametric programming. Comput. Chem. Eng. {\bf 139}, 106902 (2020)

\bibitem{VOY15}
Voronin, S., Ozkaya, G., Yoshida, D.: Convolution based smooth approximations to the absolute value function with application to non-smooth regularization. arXiv preprint arXiv:1408.6795v2 (2015)

\bibitem{WHZMF16}
Wang, Z., Hutter, F., Zoghi, M., Matheson, D., de Freitas, N.: Bayesian optimization in a billion dimensions via random embeddings.  J. Artif. Intell. Res. {\bf 55}, 361--387 (2016)






\end{thebibliography}
\end{document}